\documentclass[reqno,final,12pt]{article}
\usepackage{amssymb,amsthm,amsmath,color,url,fullpage,cite,stmaryrd
}
\usepackage{url} 
\usepackage{hyperref}
\usepackage[noabbrev,capitalise]{cleveref}
\usepackage{tikzit}

\tikzstyle{Black Dot}=[fill=black, draw=black, shape=circle]
\tikzstyle{Empty}=[fill=white, draw=black, shape=circle]

\tikzstyle{Red}=[-, draw=red, line width=1.2pt]
\tikzstyle{Blue}=[-, draw=blue, line width=1.0pt]
\tikzstyle{Border}=[-, draw=red, dashed]
\tikzstyle{Dashed}=[-, dashed]

\newtheorem{thm}{Theorem}[section]
\newtheorem{lem}[thm]{Lemma}
\newtheorem{obs}[thm]{Observation}

\newtheorem{cor}[thm]{Corollary}

\newtheorem{qn}[thm]{Question}
\newtheorem{problem}[thm]{Problem}

\newcommand{\eps}{\varepsilon}

\newcommand{\mc}[1]{\mathcal{#1}}
\newcommand{\bb}[1]{\mathbb{#1}}
\newcommand{\dd}[1]{\partial{#1}}
\newcommand{\brm}[1]{\operatorname{#1}}

\newcommand{\nm}[1]{\llbracket #1 \rrbracket}

\newcommand{\dH}{{\partial H}}
\newcommand{\dlH}{{\partial_{l} H}}
\newcommand{\drH}{{\partial_{r} H}}
\newcommand{\F}{{\mathcal{F}}}
\newcommand{\tw}{{\operatorname{tw}}}


\begin{document}
\title{Densities of minor-closed graph classes are rational}
\author{Rohan Kapadia \thanks{Department of Computer Science and Software Engineering, Concordia University, Montreal, Quebec, Canada. Email: {\tt rohan.f.kapadia@gmail.com}} \and Sergey Norin\thanks{Department of Mathematics and Statistics, McGill University. Email: {\tt sergey.norin@mcgill.ca}. Supported by an NSERC Discovery grant.} }

\maketitle

\begin{abstract}
For a graph class $\mc{F}$, let $ex_{\mc{F}}(n)$ denote the maximum number of edges in a graph in $\mc{F}$ on $n$ vertices.	
We show that for every proper minor-closed graph class $\mc{F}$ the function $ex_{\mc{F}}(n) - \Delta n$ is eventually periodic, where $\Delta = \lim_{n \to \infty} ex_{\mc{F}}(n)/n$ is the \emph{limiting density} of $\mc{F}$. This confirms a special case of a conjecture by Geelen, Gerards and Whittle.  In particular, the limiting density of every proper minor-closed graph class is rational, which answers a question of Eppstein. 

As a major step in the proof we show that every proper minor-closed graph class contains a  subclass of bounded pathwidth with the same limiting density, confirming a conjecture of the second author. 

Finally, we investigate the set of limiting densities of classes of graphs closed under taking topological minors.
\end{abstract}

\section{Introduction}
All graphs in this paper are finite and simple. A graph $H$ is \emph{a minor} of a graph $G$ if a graph isomorphic to $H$ can be obtained from a subgraph of $G$ by contracting edges, and we say that $G$ is \emph{$H$-minor-free}, otherwise. A class of graphs $\mc{F}$ is  \emph{minor-closed} if for every $G \in \mc{F}$ and every minor  $H$ of $G$ we have $H \in \mc{F}$. \footnote{In particular, minor-closed graph classes are closed under isomorphisms.}

This paper  investigates the extremal properties of minor-closed graph classes. More precisely, we study the extremal functions and limiting densities defined as follows. The \emph{extremal function} $ex_{\mc{F}}: \bb{Z}_{+} \to \bb{Z}_{+}$ of a graph class $\mc{F}$ is defined by $$ex_{\mc{F}}(n) = \max_{G \in \mc{F}, |V(G)|=n}{|E(G)|}.$$ 
The \emph{density} of a   non-null graph $G$ is $d(G) = \frac{|E(G)|}{|V(G)|}$. 
The \emph{limiting density} of a graph class $\mathcal{F}$,  denoted by $d(\mathcal{F})$, is the minimum real number $d$ such that any  graph in $G \in \mathcal{F}$ has density at most $d + o_{|V(G)|}(1)$. 
Equivalently,
\[ d(\mathcal{F}) = \limsup_{n \rightarrow \infty} \frac{ex_{\mc{F}}(n)}{n}. \] 	
For a graph $H$, let $\brm{Forb}(H)$ denote the minor-closed class of all $H$-minor-free graphs $G$.

Limiting densities of classes $\brm{Forb}(K_t)$ have been intensively  
investigated starting with works of Wagner~\cite{Wagner64} and Mader~\cite{Mader67,Mader68}, partially due to the connections with the famous Hadwiger's conjecture~\cite{Had43}.\footnote{If $H$ is connected graph then $\mc{F}=\brm{Forb}(H)$ is closed under disjoint unions, implying that in this case the limiting density of $\mc{F}$ can be defined simply as $d(\mc{F}) = \sup{G \in \mc{F}}d(\mc{G})$.}
In~\cite{Mader67} Mader proved that $d(\brm{Forb}(K_t))$ is finite for every $t$, implying that  $d(\mathcal{F})$ is finite  for every proper minor-closed class $\mc{F}$.\footnote{A graph class is \emph{proper} if it does not include all graphs.}  Improving Mader's result, Kostochka~\cite{Kostochka82,Kostochka84} and Thomason~\cite{Thomason84} independently determined the approximate order of magnitude of  $d(\brm{Forb}(K_t))$,  and, eventually, Thomason~\cite{Thomason01} established the precise asymptotics. We refer the reader to~\cite{Thomason06} for a survey of the limiting densities of $\brm{Forb}(H)$ for general graphs $H$, and to~\cite{ThoWal19} for an update on the current state of the area.

In a different direction, Eppstein~\cite{Eppstein10} initiated the study of the set $\mc{D}$ of all limiting densities of proper minor-closed classes of graphs.  He proved that  $\mc{D}$  is countable, well-ordered and closed, and gave an explicit description  $\mc{D} \cap [0,3/2]$. McDiarmid and Przykucki~\cite{McdPrz19} further extended the results of~\cite{Eppstein10}, giving a complete description of $\mc{D} \cap [0,2]$. This paper continues the investigation in this direction. 

\vskip 5pt Our main results are Theorems~\ref{t:rational}-\ref{t:topological} below.

First, we answer a question of Eppstein~\cite{Eppstein10} showing that $\mc{D} \subseteq \bb{Q}.$

\begin{thm}\label{t:rational} 	The limiting density of every proper minor-closed class is rational. 
\end{thm}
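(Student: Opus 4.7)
The plan is to exploit the two main results advertised in the abstract in sequence. The key reduction, claimed as the principal structural step of the paper, asserts that every proper minor-closed class $\mc{F}$ contains a subclass $\mc{F}' \subseteq \mc{F}$ of bounded pathwidth with $d(\mc{F}') = d(\mc{F})$. Granting this, it suffices to establish the stronger conclusion that $ex_{\mc{F}'}(n) - d(\mc{F}')n$ is eventually periodic for minor-closed classes $\mc{F}'$ of pathwidth at most some constant $k$; rationality of $d(\mc{F}) = d(\mc{F}')$ then follows immediately, proving \Cref{t:rational}.

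For the bounded-pathwidth step, I would encode graphs in $\mc{F}'$ as walks in a finite weighted digraph $\mc{A}_k$. The vertex set of $\mc{A}_k$ consists of the finitely many isomorphism types of ``bag states'': a labelled set of at most $k+1$ vertices together with the edges already committed among them, augmented by a bounded amount of bookkeeping that tracks partial progress toward each of the (finitely many) minimal forbidden minors of $\mc{F}'$. The arcs of $\mc{A}_k$ correspond to the elementary operations of a nice path decomposition --- introduce a new vertex with a chosen neighbourhood in the current bag, or forget an existing vertex --- with each arc $a$ carrying a vertex-weight $v(a) \in \{0,1\}$ and an integer edge-weight $e(a)$. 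Arcs that would create a forbidden minor are discarded; by bounded pathwidth, this minor-closedness condition is a purely local check on the state.

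By construction, graphs in $\mc{F}'$ on $n$ vertices correspond to walks in $\mc{A}_k$ of total vertex-weight $n$ between designated start and end states, with edge count equal to the total edge-weight of the walk. Consequently
\[
ex_{\mc{F}'}(n) = \max_{W\,:\,v(W) = n} e(W),
\]
and the limiting density is the maximum mean weight of a directed cycle
\[
d(\mc{F}') = \max_{C \subseteq \mc{A}_k} \frac{e(C)}{v(C)}.
\]
Since the weights are integers, this maximum is attained by a simple cycle and is therefore rational. Moreover, eventual periodicity of $ex_{\mc{F}'}(n) - d(\mc{F}')n$ follows from standard results on longest walks in integer-weighted digraphs (the ``cyclicity'' theorem of max-plus spectral theory), giving the promised strengthening.

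The main obstacle is the bounded-pathwidth reduction itself. Proper minor-closed classes include classes of unbounded pathwidth --- in fact, unbounded tree-width --- so the content is that the density extremum is always witnessed by a ``linear'' substructure rather than a genuinely branching one. One must argue that the densest examples in $\mc{F}$ admit long path-like arrangements saturating the density, which seems to require the Robertson--Seymour structure theorem together with a careful averaging argument turning a generic extremal sequence into a bounded-pathwidth extremal sequence without losing density. By contrast, the finite-automaton endgame described above is essentially a soft combinatorial-optimisation fact.
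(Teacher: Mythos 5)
Your route is correct in outline but genuinely different from the paper's. The paper never stops at the qualitative statement of \cref{t:pathwidth0}; it proves the sharper \cref{t:pathwidth}, producing a single $\mc{F}$-tame $q$-patch $H$ with $\lim_{n\to\infty} d(\nm{H^n}) = d(\mc{F})$. Since $|V(H^n)|$ and $|E(H^n)|$ are affine in $n$, rationality of $d(\mc{F})$ drops out in one line --- no automaton or max-plus machinery is needed. Your approach instead black-boxes the bounded-pathwidth reduction and finishes with a finite-state encoding of nice path decompositions together with the maximum-mean-cycle and cyclicity theorems. That endgame is sound and is essentially the strategy of the first author's earlier matroid paper \cite{Kap18} (which the present paper cites, and which the concluding remarks invoke precisely for the bounded-treewidth special case of \cref{que:maxnumberofedges}), so you have correctly identified the ``soft'' tail of the argument. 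The trade-off: the paper's patch formulation gets rationality for free once the structural work is done, while your automaton route requires a bit more care --- the state must record a Myhill--Nerode-style characteristic (a folio of bounded-size rooted minors spanning the current bag), not merely ``partial progress toward forbidden minors,'' and the maximum-mean-cycle identity needs to be restricted to cycles both reachable from a start state and co-reachable to an accept state --- but in exchange it gives eventual periodicity of $ex_{\mc{F}'}(n) - d(\mc{F}')n$ without extra effort. Note, though, that this periodicity is only for $\mc{F}'$, not for $\mc{F}$; the paper's \cref{t:periodic} for the full class requires a separate argument (Lemmas~\ref{l:balanced1}--\ref{c:periodic1}), and whether $ex_{\mc{F}'}$ can be made to coincide with $ex_{\mc{F}}$ is exactly the open \cref{que:maxnumberofedges}. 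You are right that the genuine content is the bounded-pathwidth reduction itself; in the paper this is where the graph minor well-quasi-ordering, the Geelen--Joeris grid theorem, and Chuzhoy's flat-wall theorem are all consumed.
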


As the crucial ingredient in the proof of \cref{t:rational}, we establish a conjecture of the second author (see~\cite[Conjecture 7.8]{GeeGerWhi13}) that the limiting density of every proper minor-closed class of graphs $\mc{F}$ is achieved by a subclass of $\mc{F}$ with a fairly simple structure. 

\begin{thm}\label{t:pathwidth0}
	For each proper minor-closed class $\mathcal{F}$ there exists a minor-closed class $\mc{F}' \subseteq \mc{F}$ of bounded pathwidth such that $d(\mc{F}')=d(\mc{F})$.\footnote{A minor-closed class of graphs has \emph{bounded pathwidth} if it does not include all trees.}
\end{thm}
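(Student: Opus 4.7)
The plan is to construct a template graph $H$ together with two distinguished interfaces $S_1, S_2 \subseteq V(H)$ of the same size, such that the graphs $H^{(L)}$ obtained by gluing $L$ copies of $H$ consecutively (identifying the copy of $S_2$ in the $i$th piece with the copy of $S_1$ in the $(i+1)$st piece) all lie in $\mc{F}$ and have density tending to $d(\mc{F})$ as $L \to \infty$. Since every $H^{(L)}$ has pathwidth at most $|V(H)|$, the minor-closure of $\{H^{(L)} : L \in \bb{Z}_+\}$ is then a bounded-pathwidth subclass of $\mc{F}$ achieving the required limiting density.

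To find such a template, I would first fix a sequence of extremal graphs $G_n \in \mc{F}$ with $|V(G_n)| \to \infty$ and $d(G_n) \to d(\mc{F})$. Since $\mc{F}$ is proper, it excludes some $K_t$, so the graph minor structure theorem of Robertson and Seymour gives each $G_n$ a tree decomposition $(T_n, \{B_v\})$ of bounded adhesion $k = k(\mc{F})$ whose torsos are almost-embeddable in fixed surfaces with bounded apex and vortex number. After attributing each edge of $G_n$ to a node of $T_n$, I would apply an averaging argument across $T_n$, combined with pigeonhole on the finite set of isomorphism types of (torso, adhesion)-profiles of bounded size, to locate a long path $P = v_1 v_2 \cdots v_L$ in $T_n$ along which consecutive adhesion sets have the same size, the torso profiles are constant, and the average density contribution of the pieces on $P$ is $d(\mc{F}) - o(1)$. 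From a single typical piece on $P$ together with its two adhesion sets $S_1, S_2$, one reads off the template $H$; by construction $H^{(L)}$ is a minor of $G_n$ for some $n$, and hence lies in $\mc{F}$.

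The main obstacle, I expect, is the Ramsey/pigeonhole step: one must argue that the excess density of an extremal sequence genuinely concentrates along a long \emph{uniform} path of its tree-decomposition rather than being distributed across many branches of $T_n$ or absorbed into pieces of bounded total contribution. A natural way to handle this is in two stages: first establish the conclusion directly for classes of almost-embeddable graphs, where surface topology forces extremal configurations into essentially path-like form, and then bootstrap to general proper minor-closed classes through the structure theorem. In the bootstrap, either the density is already realised inside a single torso (apply the almost-embeddable case), or it arises from repetition along $T_n$ (apply the extraction above). A careful accounting of how edges are assigned to torsos versus adhesion sets will be needed to guarantee that the produced template $H$ actually attains $d(\mc{F})$, rather than some value strictly below.
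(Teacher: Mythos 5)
Your proposal shares the paper's high-level goal of extracting a repeatable template $H$ along a path-like sequence of pieces, but the route you sketch has two genuine gaps that the paper's actual argument is specifically designed to avoid, and it invokes machinery (the almost-embeddable structure theorem) that the paper does not use.

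First, the pigeonhole step as you describe it cannot go through. You propose to pigeonhole on ``the finite set of isomorphism types of (torso, adhesion)-profiles of bounded size,'' but the graph minor structure theorem does not give bounded-size torsos: it bounds genus, apex number, and vortex width, while the torsos themselves may be arbitrarily large (indeed, for high-treewidth members of $\mc{F}$ they must be). So there is no finite set of profiles to pigeonhole over. The paper handles this by an explicit dichotomy on the $\theta$-treewidth of $G$: if it is bounded, Lemmas~\ref{l:tree2path} and~\ref{l:goodpath} produce a long perfectly-linked path decomposition of bounded width from which bounded-size patches can be cut; if it is large, the Geelen--Joeris asymmetric grid theorem (\cref{t:wheel}) and Chuzhoy's version of the Flat Wall theorem (\cref{t:flat}) are used to carve bounded-size ``diamond'' patches out of crosslessly embedded subwalls (\cref{t:large}). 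Your plan to first settle the almost-embeddable case and then bootstrap conflates these two regimes and offers no mechanism to replace the grid/flat-wall step.

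Second, your density-accounting step is under-specified in exactly the place you flag as ``the main obstacle,'' and the obstacle is real. Knowing that the \emph{average} piece along $P$ has density $d(\mc{F})-o(1)$ does not let you pick ``a single typical piece'' $H$ and conclude that $H^{(L)}$ has density tending to $d(\mc{F})$: a typical piece could have density strictly below $d(\mc{F})$, and then the powers of that piece stay strictly below $d(\mc{F})$. The paper sidesteps this with the notion of an $N$-pruned graph (\cref{l:pruned1}, \cref{l:pruned}): a graph in $\mc{F}$ whose $\phi$-value cannot be increased by any minor of bounded co-order. In such a graph \emph{no} patch in the extracted patchwork can be light, because contracting a light patch would raise $\phi$ and violate prunedness (\cref{l:faddition}). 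Combined with the labeled Robertson--Seymour well-quasi-ordering theorem (\cref{t:RSwqo}, via \cref{t:wqo} and \cref{l:heavy}) to bound the number of heavy patches and to guarantee that the surviving balanced patches contain a common $\mc{F}$-tame minor $M$, this yields a template whose powers attain $d(\mc{F})$ exactly. Your sketch has no replacement for either the pruning step (forcing per-piece density at least $d(\mc{F})$) or for the wqo step (ensuring $\nm{M^n}\in\mc{F}$ for all $n$, i.e.\ $\mc{F}$-tameness); simply observing that $H^{(L)}\leq G_n$ for \emph{some} $n$ does not give membership in $\mc{F}$ for all $L$ unless you also control the growth of $L$ with $n$, which is precisely what \cref{l:multiply} and the wqo machinery accomplish.
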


Strengthening \cref{t:rational} we prove that the extremal function of every proper-minor closed class of graphs is a sum of a linear function and an eventually periodic function. This establishes a special case of a conjecture by Geelen, Gerards and Whittle~\cite[Conjecture 7.7]{GeeGerWhi13}  made  more generally for linearly dense minor-closed classes of matroids. 

\begin{thm}\label{t:periodic}
	For every proper minor-closed class $\mathcal{F}$ there exist integers $P$ and $M$ and rational numbers  $a_0, \ldots, a_{P-1}$ such that $ex_\mathcal{F}(n) = d(\mc{F}) n + a_i$ for all $n \equiv i \pmod P$ such that $n > M$.
\end{thm}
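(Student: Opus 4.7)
The plan is to combine \cref{t:pathwidth0} with a combinatorial analysis of extremal functions for minor-closed graph classes of bounded pathwidth. Let $\Delta = d(\mc{F})$, and apply \cref{t:pathwidth0} to obtain a minor-closed $\mc{F}' \subseteq \mc{F}$ of pathwidth at most some $w$ with $d(\mc{F}') = \Delta$. The theorem will then follow from two ingredients: (a) $ex_{\mc{F}'}(n)$ itself already has the form $\Delta n + a_{n \bmod P}$ for $n$ sufficiently large; and (b) $ex_{\mc{F}}(n) \le ex_{\mc{F}'}(n) + O(1)$.

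For (a), I would encode graphs in $\mc{F}'$ as walks in a finite weighted automaton built from nice path decompositions of width $w$. The states are isomorphism types of labeled graphs on at most $w+1$ vertices (the possible bag contents), and there are two kinds of arcs: \emph{introduce} arcs that add one new vertex together with a choice of its adjacencies in the current bag (with vertex-weight $1$ and edge-weight equal to the degree of the new vertex in the bag), and \emph{forget} arcs that drop one vertex (both weights $0$). Since $\mc{F}'$ is minor-closed and has bounded pathwidth, admissibility of a walk is determined by a finite-state restriction (each excluded minor can be detected locally from bounded-size substructures of the bag sequence). A graph in $\mc{F}'$ on $n$ vertices then corresponds to an admissible walk using exactly $n$ introduce arcs, and $ex_{\mc{F}'}(n)$ is the maximum total edge-weight of such a walk.

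Extracting periodicity from this model is then a standard max-plus algebra argument. Maximizing edge-weight over walks with introduce-count equal to $n$ is a tropical matrix power problem, and by the cyclicity theorem for max-plus semirings — equivalently, by the fact that every sufficiently long optimal walk consists, outside of a bounded prefix and suffix, of repetitions of a cycle maximizing the ratio (edges added)/(vertices added) in the automaton — the maximum has the form $\Delta n + a_{n \bmod P}$ for $n > M$, with $\Delta \in \bb{Q}$ (the optimal ratio) and $a_0, \ldots, a_{P-1} \in \bb{Q}$. This simultaneously recovers the rationality asserted in \cref{t:rational}.

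The main obstacle is (b): it does not follow formally from \cref{t:pathwidth0} as stated, which only gives $ex_{\mc{F}}(n) - ex_{\mc{F}'}(n) = o(n)$. I would aim to prove a quantitative strengthening of \cref{t:pathwidth0}, asserting that $\mc{F}'$ can be chosen so that $ex_{\mc{F}}(n) \le ex_{\mc{F}'}(n) + C$ for some constant $C = C(\mc{F})$. Such a strengthening should emerge from a refined reading of the structural argument used to prove \cref{t:pathwidth0}: a near-extremal $n$-vertex graph in $\mc{F}$ should, after removing or rerouting boundedly many vertices and edges, become a graph in $\mc{F}'$ of essentially the same edge count. I expect this conversion of an ``asymptotic density'' conclusion into an ``up to additive constant'' conclusion to be the technical heart of the proof, requiring the detailed graph-minor structural theory developed for \cref{t:pathwidth0} rather than just its statement.
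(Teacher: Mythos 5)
Your decomposition into (a) and (b) is reasonable, but the proposal as written has two genuine gaps, and it diverges substantially from the paper's route.

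First, even granting both (a) and (b), the two ingredients do not yield the theorem. From $ex_{\mc{F}'}(n) \le ex_{\mc{F}}(n) \le ex_{\mc{F}'}(n) + C$ and $ex_{\mc{F}'}(n) = \Delta n + a_{n \bmod P}$ for large $n$, you only get that $ex_{\mc{F}}(n) - \Delta n$ takes finitely many values and stays within a window of width $C$ above $a_{n \bmod P}$; nothing forces it to be eventually constant on each residue class. You would still need a monotonicity statement of the kind the paper proves (Corollary~\ref{c:periodic1}: $f(n-P) \ge f(n)$ for large $n$, where $f(n) = ex_{\mc{F}}(n) - \Delta n$). The ``finitely many values'' observation by itself is not enough.

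Second, and more seriously, ingredient (b) is not only unproved in your write-up — it is precisely (in its exact form $ex_{\mc{F}'}(n) = ex_{\mc{F}}(n)$) Question~\ref{que:maxnumberofedges}, which the paper raises and leaves open, noting that only the bounded-treewidth case is known (\cite[Theorem 6.5]{Kap18}). Your expectation that an additive-constant version ``should emerge from a refined reading'' of the proof of \cref{t:pathwidth0} is optimistic: the proof of \cref{t:pathwidth0} finds a tame patch $M$ with the right limiting density by locating balanced patches in a single $N$-pruned graph $G$, but there is no argument that an \emph{arbitrary} extremal graph in $\mc{F}$ (for \emph{every} large $n$) can be modified by $O(1)$ deletions into a graph in the fixed subclass $\mc{F}'$. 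The paper deliberately sidesteps this issue.

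The paper's actual proof of \cref{t:periodic} works directly inside $\mc{F}$. It shows that $f(n)$ is bounded (\cref{c:bounded}), uses the structural patchwork theorem (\cref{t:struct2}) plus the pruning/contraction machinery to find, inside any large extremal graph, many embedded balanced patches of bounded size (\cref{l:balanced1}, \cref{l:balanced2}); a simple arithmetic pigeonhole (\cref{l:add}) then lets one contract a subfamily of these patches removing exactly $P = p!$ vertices without decreasing $\phi$, giving $f(n-P) \ge f(n)$ (\cref{c:periodic1}). Eventual periodicity then follows from boundedness, finiteness of the value set, and this monotonicity. Your part (a) — the weighted-automaton / max-plus cyclicity argument for the bounded-pathwidth subclass — is essentially the method of \cite{Kap18} and is plausible (modulo details: the automaton must test admissibility against the excluded minors of $\mc{F}'$ via a bounded-state abstraction of partial path decompositions, and the cyclicity theorem must be applied to walks with a fixed \emph{introduce-count} rather than a fixed length), but it is a route the paper chooses not to take precisely because of the difficulty of (b).
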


Previously, the first author~\cite{Kap18} proved analogues of Theorems~\ref{t:rational}--\ref{t:periodic} for classes of matroids of bounded branchwidth representable over a finite field. Our argument shares some ingredients with~\cite{Kap18}. However, in an attempt to make the paper more accessible, and as self-contained as possible, we prove graph theoretical versions of these results, instead of referencing  matroid theoretical statements from~\cite{Kap18}.

\vskip 5pt
The proofs of Theorems~\ref{t:rational}--\ref{t:periodic} rely on several heavy tools from structural graph minor theory:
\begin{itemize}
	\item The Robertson-Seymour graph minor theorem~\cite{GM20}, which states that graphs are well-quasi-ordered by the minor relation. More precisely we use the extension of this theorem to labeled graphs from~\cite{GMXXIII};
	\item A recent asymmetric grid  theorem due to Geelen and Joeris~\cite{GeeJoe16};
	\item An asymmetric version of the Robertson-Seymour Flat Wall theorem~\cite{GM13},  which is extracted from the  new proof of the Flat Wall theorem by Chuzhoy~\cite{Chuzhoy14}.
\end{itemize}

The first of these theorems is especially advanced, but we believe it to be essential.\footnote{The result of Eppstein~\cite{Eppstein10} that $\mc{D}$ is countable already relies on the Robertson-Seymour graph minor theorem.} In support of this belief, we investigate a closely related setting of topologically minor-closed classes, where the well-quasi-ordering is notably absent, and show that the set of their densities behaves very differently. 

A graph $H$ is \emph{a topological minor} of a graph $G$ if a graph isomorphic to $H$ can be obtained from a subgraph of $G$ by repeatedly contracting edges incident with vertices of degree two. Mader~\cite{Mader67} proved that the limiting densities of proper graph classes closed under taking topological minors are finite. Let $\mc{D}_T$ denote the set of these densities. Ne\v{s}et\v{r}il\cite{NesPrivate}(see also \cite[Question 8.9]{Norin15}) asked if \cref{t:rational} can be extended to classes of graphs closed under taking topological minors, i.e. whether $\mc{D}_T \subseteq \bb{Q}$? Our last theorem gives a negative answer to this question.

\begin{thm}\label{t:topological} 
	We have
	\begin{description}
		\item[(i)] $\mc{D}_T \cap [0,3/2]$ is well-ordered,
		\item[(ii)] $[3/2,+ \infty) \subseteq \mc{D}_T$.
	\end{description} 
\end{thm}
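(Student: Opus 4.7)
The plan is to handle parts (i) and (ii) by distinct arguments: part (ii) is a construction, while part (i) is a reduction to Eppstein's well-ordering of $\mc{D}$.

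For \textbf{part (ii)}, my goal would be to exhibit, for each real $\alpha \geq 3/2$, a topologically-minor-closed class with limiting density exactly $\alpha$. Call a graph $G$ \emph{rigid} if every topological minor of $G$ has density at most $d(G)$, and note that if $\{G_n\}_{n \geq 1}$ is a sequence of rigid graphs with $d(G_n) \leq \alpha$ and $d(G_n) \to \alpha$, then the class $\mc{F}_\alpha$ generated by $\{G_n\}$ under topological minors and disjoint unions satisfies $d(\mc{F}_\alpha) = \alpha$: the limiting density of a disjoint-union-closed class equals $\sup_{G \in \mc{F}} d(G)$. A short double count yields that every $d$-regular graph is rigid---the subgraph plus suppression procedure produces a graph of maximum degree at most $d$ with no vertex of degree two, hence density at most $d/2$. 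This gives $d/2 \in \mc{D}_T$ for every integer $d \geq 3$. To cover the intermediate values of $\alpha$, I would construct rigid graphs with densities dense in $[d/2,(d+1)/2]$ using nearly $d$-regular graphs with additional edges placed carefully (for instance, at large pairwise distance, forming an independent set, or in an analogous expander-like arrangement) so as to prevent dense subgraphs from arising. For irrational $\alpha$, a sequence with $d(G_n) \to \alpha$ does the job.

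For \textbf{part (i)}, my strategy would be to show $\mc{D}_T \cap [0,3/2] \subseteq \mc{D}$ and then invoke Eppstein's well-ordering of $\mc{D}$~\cite{Eppstein10}. The key structural fact is that whenever $\mc{F}$ is topologically-minor-closed, closed under disjoint union (which we may assume without loss of generality since this operation preserves the limiting density), and $d(\mc{F}) \leq 3/2$, every subgraph of every $G \in \mc{F}$ has density at most $3/2$: otherwise disjoint unions of a dense member would produce graphs violating the $\limsup$ bound. This uniform $3/2$-sparseness is a strong structural constraint. A first attempt to associate a minor-closed class is via $\mc{F}^\Delta = \{G \in \mc{F} : \Delta(G) \leq 3\}$, which is minor-closed because topological minors and minors coincide on subcubic graphs. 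The critical step is then to show $d(\mc{F}^\Delta) = d(\mc{F})$ by deleting high-degree vertices from extremal members with only $o(|V(G)|)$ loss of edges, exploiting the sparseness constraint that forces high-degree vertices to have sparse neighborhoods.

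The main obstacle is in part (i): the reduction $d(\mc{F}^\Delta) = d(\mc{F})$ is not always valid (e.g., the topological-minor-closure of the stars $\{K_{1,n}\}$ has density $1$ but its $\Delta \leq 3$ members only reach density $3/4$), so a more careful argument is required. One likely has to split into cases by the structure of extremal members and use Eppstein's explicit description of $\mc{D} \cap [0,3/2]$ to identify the correct minor-closed companion for each case, rather than relying on a single uniform construction. For part (ii), the $d$-regular case is immediate, and the main work lies in verifying rigidity of the mixed-degree interpolating construction via a careful analysis of the possible degree sequences of subgraphs.
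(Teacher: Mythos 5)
Your framework for part (ii) is sound: if $\{G_n\}$ are rigid with $d(G_n)\to\alpha$ from below, the class generated by topological minors and disjoint unions has limiting density $\alpha$, and the double count showing $d$\nobreakdash-regular graphs are rigid is correct (every topological minor has maximum degree at most $d$, hence density at most $d/2$). The real gap is the interpolation. ``Nearly $d$-regular with carefully placed extra edges'' does not obviously produce rigid graphs: if $G$ has a small set of vertices of degree $d+1$ and they can be joined by $d+1$ internally disjoint paths through degree-$d$ vertices, then $G$ has a $(d+1)$-regular graph as a topological minor, whose density $(d+1)/2$ exceeds $d(G)$ whenever the high-degree set is a small fraction of $V(G)$. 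You would need an explicit construction and a nontrivial rigidity verification, which is precisely the part you leave open. The paper avoids this difficulty entirely: it does not insist that each building block be individually sparse. Instead it glues $1$-patches $H_1,\ldots,H_l$ along one-vertex cuts and controls the \emph{cumulative} excess $\sum_{i\in J}\psi_\Delta(\nm{H_i})$ over every interval $J$, alternating a slightly ``heavy'' patch $H_+$ (positive excess) with a slightly ``light'' patch $H_-$ (negative excess) so that the running total stays in a bounded window. The upper bound (Lemma~\ref{l:upper}) is an induction that exploits the one-cut structure, and Lemma~\ref{l:topdensity} reduces the whole construction to exhibiting two graphs $H_+$, $H_-$ with $\psi_\Delta(H_+)\ge 0\ge\psi_\Delta(H_-)$ and $\psi_\Delta(H_+)-\psi_\Delta(H_-)\le\Delta-1$, which is done with almost-complete graphs for $\Delta\ge 2$ and outerplanar fans for $3/2\le\Delta<2$. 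No rigidity of individual pieces is needed.

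For part (i), your proposed reduction $\mc{D}_T\cap[0,3/2]\subseteq\mc{D}$ followed by Eppstein's well-ordering of $\mc{D}$ is an appealing idea, but you yourself point out that the candidate companion class $\mc{F}^\Delta=\{G\in\mc{F}:\Delta(G)\le 3\}$ can have strictly smaller density, and you do not supply a replacement argument --- ``split into cases using Eppstein's explicit description'' is a direction, not a proof. The paper proceeds differently and more directly: it runs Eppstein's own well-ordering argument, replacing the Robertson--Seymour graph minor theorem by the Liu--Thomas labeled well-quasi-ordering theorem (\cref{t:topwqo}) for graphs of bounded treewidth that exclude a Robertson chain. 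The crucial observation (Lemma~\ref{o:fan2}) is that any $\mc{F}$-tame fan patch of a class with density strictly below $3/2$ is automatically a $0$- or $1$-patch of bounded treewidth excluding $RC(k)$, so the Liu--Thomas theorem applies to the sequence of fan patches witnessing a hypothetical strictly decreasing chain of densities, yielding a contradiction. In short, the $3/2$ threshold in (i) is exactly the point below which the WQO machinery for topological minors becomes available, and the paper uses it directly rather than trying to pass to minors.
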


A recent theorem of Liu and Thomas~\cite{LiuTho20}, which we use to prove \cref{t:topological} (i), implies that every class of graphs $\mc{F}$ closed under taking topological minors with $d(\mc{F}) < 3/2$ is  well-quasi-ordered by the topological minor relation. Thus the sharp change of behavior of $\mc{D}_T$ at $3/2$ is directly tied to well-quasi-ordering.

\vskip 5pt
The rest of the paper is structured as follows. In \cref{s:patch} we briefly outline the proof of  Theorems~\ref{t:rational}--\ref{t:periodic}, introduce terminology needed to implement our strategy, and using this terminology state   Theorems~\ref{t:pathwidth} and ~\ref{t:struct2}, the first of which is a common strengthening of Theorems~\ref{t:rational} and~\ref{t:pathwidth0}, and the second is our main structural tool.  
We derive Theorems~\ref{t:pathwidth} and~\ref{t:periodic} from \cref{t:struct2} in \cref{s:proofs}. In  \cref{s:struct} we prove  \cref{t:struct2} modulo a further technical statement (\cref{t:large}) corresponding to the large treewidth case of \cref{t:struct2}.  \cref{t:large} is proved in \cref{s:large},  thus finishing the proof of Theorems~\ref{t:rational}--\ref{t:periodic}. \cref{s:top} is devoted to the proof of \cref{t:topological}. \cref{s:remarks} concludes the paper with a few remarks and open questions. 
  
\subsection*{Notation}
We denote $\{1,2,\ldots,n\}$ by $[n]$. For integers $m<n$ we denote $\{m,m+1,\ldots,n\}$ by $[m,n]$, when there is no danger of confusion between this set and the corresponding set of real numbers.  
 
We use largely standard graph-theoretical notation. 
Let $G$ be a graph, and let $X,Y \subseteq V(G)$. We denote by $G[X]$ the subgraph of $G$ induced by $X$.

A path $P$ in $G$ is an \emph{$(X,Y)$-path} if one end of $P$ is in $X$ and the other in $Y$. A \emph{linkage} in $G$ is a collection of pairwise vertex-disjoint paths. An \emph{$(X,Y)$-linkage} is a linkage $\mc{P}$ such that every path in $\mc{P}$ is an $(X,Y)$-path. Let $\kappa_G(X,Y)$  denote the maximum size of an $(X,Y)$-linkage in $G$. We write  $\kappa(X,Y)$, instead of $\kappa_G(X,Y)$,  when the choice of the graph $G$ is clear from the context.

	A \emph{separation} of a graph $G$ is a pair $(A,B)$ such that $A \cup B = V(G)$ and no edge of $G$ has one end in $A-B$ and the other in $B-A$. The \emph{order} of  a separation $(A,B)$ is $|A \cap B|$. Recall that by Menger's theorem  $\kappa_G(X,Y)$  is equal to the minimum order of a separation $(A,B)$ of $G$ such that $X \subseteq A$, $Y \subseteq B$.

We write $H \leq G$ if a graph $H$ is a minor of a graph $G$. 

\section{Patches and the statement of the main structural result}\label{s:patch}

The strategy of the proof of Theorems~\ref{t:rational}-\ref{t:periodic} can be briefly  summarized as follows. 

Let $\mc{F}$
be a proper minor-closed class of graphs, and let $\Delta=d(\mc{F})$. Consider a large graph $G$ in $\mc{F}$ such that the ``density''\footnote{In the proof we use a non-standard notion of density, hence the quotation marks.} of $G$ can not be increased by performing a bounded number of deletions and contractions. (\cref{l:pruned} below shows that we can  find such $G$.)  We find a large collection of bounded size ``patches'' $H_1,H_2,\ldots,H_k$ in $G$. The choice of $G$ implies that each of these pieces has ``density'' at least $\Delta$, as otherwise contracting them will make $G$ denser, and a careful choice of such a collection allows us to glue these patches together in a linear fashion to obtain a large graph in $\mc{F}$ with density close to $\Delta$ and bounded pathwidth, as desired in \cref{t:pathwidth0}. Meanwhile, \cref{t:periodic} is derived by analysis of similar collections of patches in  graphs that achieve $ex_{\mc{F}}(n)$. 

\subsection*{Patches and their minors}

We start implementing the strategy outlined above by formalizing the notion of a patch. This notion was originally introduced by Thomas and the second author to investigate the structure of large $t$-connected graphs in proper minor-closed classes.\footnote{Interestingly, very similar objects  called \emph{tiles} were also considered by Dvo\v{r}\'ak and Mohar\cite{DvoMoh16} in their study of crossing-critical graphs.}  Several definitions and observations given in this subsection have previously appeared in a survey ~\cite{Norin15} by the second author. 

\vskip 5pt

Let $q$ be a non-negative integer. A {\em $q$-patch} (or simply a {\em patch}) $H$ is a triple $(\nm{H},a_H,b_H)$, where $\nm{H}$ is a graph, and  $a_{H} , b_{H}: [q] \to V(\nm{H})$ are injective maps. For brevity, from now on we refer to the vertices and edges of $\nm{H}$
as vertices and edges of $H$,  and use $V(H)$ and $E(H)$ to denote their sets. Occasionally, it will be convenient for us to think of graphs as $0$-patches.

Two  $q$-patches $H$ and $H'$ are \emph{isomorphic} if there exists an isomorphism $\psi: V(H) \to V(H')$ between $\nm{H}$ and $\nm{H'}$, such that $\psi (a_{H}(i)) = a_{H'}(i)$ and $\psi(b_{H}(i)) = b_{H'}(i)$ for every $i \in [q]$.  \footnote{We generally do not distinguish between isomorphic patches unless we are considering their embedding in graphs.} We define the \emph{the left boundary of $H$} as $\dlH = a_H([q])$,  \emph{the right boundary of $H$}  as $\drH = b_H([q])$, and  \emph{the boundary of $H$} as $\dH = \dlH \cup \drH$. It is convenient to think of the functions $a_H$ and $b_H$ as a labeling of the boundary of $H$, which determines the way in which patches can be glued together.

The gluing is formalized as the following product operation.
Let $H_1$ and $H_2$ be $q$-patches. Replacing $H_1$ and $H_2$ by isomorphic patches if necessary we assume that $V(H_1) \cap V(H_2)= \emptyset.$ We define $H_1 \times H_2$ to be the $q$-patch, such that $\nm{H_1 \times H_2}$ is obtained from  $\nm{H_1} \cup \nm{H_2}$ by identifying the vertices $b_{H_1}(i)$ and $a_{H_2}(i)$ for all $i \in [q]$, and setting $a_{H_1 \times H_2}:=a_{H_1}$, $b_{H_1 \times H_2}:=b_{H_2}$. Thus $H_1 \times H_2$ is a patch obtained from ``gluing'' the left boundary of $H_2$ to the right boundary of $H_1$. Note that  $H_1 \times H_2$ is defined only up to isomorphism. Note further that
\begin{equation}\label{e:vproduct}
 |V(H_1 \times H_2)| = |V(H_1)|+|V(H_2)|-q.
\end{equation}

We say that a $q$-patch $H$ is \emph{degenerate} if $|V(H)|=q$, and we say that $H$ is \emph{non-degenerate}, otherwise.
Let $I_q$ be a degenerate $q$-patch satisfying $E(I_q)=\emptyset$ and $a_{I_q}(i) = b_{I_q}(i)$ for all $i \in [q]$. Note that $I_q$ acts as an identity with respect to the product of isomorphism classes of patches.

\vskip 10pt Let $\mc{F}$ be a graph class. 
We say that a $q$-patch $H$ is \emph{$\mc{F}$-tame} if $\nm{H^n} \in \mc{F}$ for every positive integer $n$.
Using the above notation we state a common strengthening of Theorems~\ref{t:rational} and~\ref{t:pathwidth0} 

\begin{thm}\label{t:pathwidth}
	For every proper minor-closed class $\mathcal{F}$ there exists an integer $q \geq 0$ and an $\mc{F}$-tame $q$-patch $H$ such that $\lim_{n \to \infty}d(\nm{H^n})=d(\mc{F})$. 
\end{thm}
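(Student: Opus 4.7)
The overall strategy is the one outlined in \cref{s:patch}: the desired patch $H$ is to be extracted as a ``repeating unit'' in a long path-like decomposition of a sufficiently large, sufficiently dense graph in $\mc{F}$. I would start by fixing, for each large $n$, a graph $G_n \in \mc{F}$ on $n$ vertices with density close to $\Delta := d(\mc{F})$, and then replacing $G_n$ by a ``pruned'' version of itself (via the lemma referred to as \cref{l:pruned} in the outline) so that no bounded-size deletion or contraction strictly increases the appropriate notion of density.

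Next, I would apply the main structural result \cref{t:struct2} to a large pruned $G_n$. It ought to produce, for any prescribed $N$, an expression of some minor of $G_n$ as a product $H_0 \times H_1 \times \cdots \times H_N \times H_{N+1}$ of $q$-patches, where $q$ depends only on $\mc{F}$ and each internal $H_i$ has bounded order --- either through a bounded-width path decomposition in the small-treewidth regime, or through a long flat strip produced from the asymmetric grid theorem together with the asymmetric Flat Wall theorem in the large-treewidth regime. Viewing each $H_i$ as a bounded graph with its boundary vertices labeled by the injections $a_{H_i}, b_{H_i}$, the Robertson--Seymour theorem for labeled graphs~\cite{GMXXIII} well-quasi-orders these objects, so a pigeonhole argument on the sequence $H_1,\ldots,H_N$ yields arbitrarily long runs of mutually isomorphic patches, and I fix the isomorphism type $H$ that recurs for arbitrarily large $N$. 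Since $\nm{H^k}$ is then a minor of a graph in $\mc{F}$ for every $k$, minor-closure of $\mc{F}$ implies that $H$ is $\mc{F}$-tame.

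It remains to verify the density limit. From \eqref{e:vproduct} one computes $|V(\nm{H^n})| = n(|V(H)|-q) + q$ and $|E(\nm{H^n})| = n|E(H)|$, so $\lim_{n\to\infty} d(\nm{H^n}) = |E(H)|/(|V(H)|-q)$, and the $\mc{F}$-tameness immediately gives the upper bound $|E(H)|/(|V(H)|-q) \leq \Delta$. For the matching lower bound I would use prunedness: if $|E(H)|/(|V(H)|-q) < \Delta$, then replacing an internal copy of $H$ in the decomposition of $G_n$ by the degenerate identity patch $I_q$ deletes $|V(H)|-q$ vertices and $|E(H)|$ edges, leaves the resulting graph in $\mc{F}$ by minor-closure, and strictly raises its density, contradicting the pruning.

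The principal obstacle I anticipate is the labeled WQO step: the notion of ``patch minor'' must be strong enough that substituting a smaller patch for $H_i$ in the product still produces a minor of the ambient $G_n$, yet weak enough that \cite{GMXXIII} applies to the labeled boundaries. Matching this notion precisely to the structural output of \cref{t:struct2}, and checking that prunedness of $G_n$ is preserved (or at least not destroyed in a way that invalidates the density comparison) during the patch-substitution step, is where most of the actual technical work will lie.
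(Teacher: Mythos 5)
Your high-level architecture matches the paper (prune, decompose via \cref{t:struct2}, extract a repeating unit, use tameness for the upper bound and prunedness for the lower bound), but the central extraction step contains a gap that the paper's proof is specifically designed to avoid.

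You write that the labeled Robertson--Seymour theorem ``well-quasi-orders these objects, so a pigeonhole argument on the sequence $H_1,\ldots,H_N$ yields arbitrarily long runs of mutually isomorphic patches,'' and you then ``fix the isomorphism type $H$ that recurs for arbitrarily large $N$.'' Neither half of this works. Well-quasi-ordering never produces isomorphic copies; it only produces a pair $i<j$ with $H_i$ a patch-minor of $H_j$. And the fallback ``ordinary'' pigeonhole on isomorphism types fails because the size bound on the patches is \emph{not} uniform: in \cref{t:struct2}, the guarantee $|V(H)|\le N$ uses the same $N=N_{\ref{t:struct2}}(\mc F,l,s)$ that must grow with the requested patchwork length $l$ (in the paper's proof it does so quite violently, already $N_{\ref{l:tree2path}}(l,w)=w((l-1)2^w)^l$ in the small-treewidth branch). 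Consequently the number of isomorphism types of candidate patches grows much faster than $l$, and no single type need recur even twice, let alone with unbounded multiplicity across all the $G_n$'s. This is precisely why the paper does not look for repeated isomorphism types. Instead it applies \cref{t:wqo} \emph{once} to obtain a fixed finite set $\mc M$ of refined balanced $q$-patches such that every refined balanced $q$-patch has some $M\in\mc M$ as a minor; one then shows that in a long enough patchwork all but a bounded number of patches are balanced (light is excluded by prunedness, heavy by \cref{l:heavy}), and pigeonholes over the \emph{fixed} set $\mc M$ to find $M$ occurring as a minor of $K_0$ patches. Tameness of $M$ is then supplied by \cref{l:multiply}, which again uses WQO to reduce ``tame'' to ``the $K_0$-th power lies in $\mc F$'' --- sidestepping your need for $\nm{H^k}\in\mc F$ for \emph{all} $k$ up front.

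Two secondary slips worth noting, though they are fixable. First, $|E(\nm{H^n})|\ne n|E(H)|$ in general: the correct invariant is $e(H)=|E(H)|-|E(\nm{H}[\dlH])|$, and the paper only proves the one-sided bound $\lim_n d(\nm{H^n})\ge e(H)/(|V(H)|-q)$ (\cref{c:limitdensity}), which is what is actually used. Correspondingly, the lower-bound contradiction from prunedness should be phrased through $\phi(H)=e(H)-\Delta(|V(H)|-q)$ and $\phi(G\slash H)=\phi(G)-\phi(H)$ (\cref{l:faddition}), not through a raw density comparison, since ``$p$-pruned'' in \cref{l:pruned1} is a statement about $\phi$ and not about $d$.
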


\begin{figure}
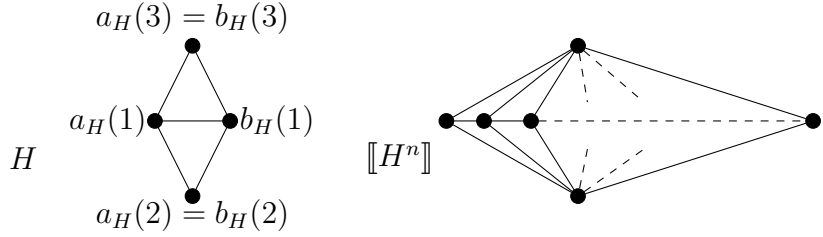

	\ctikzfig{Product}	
	\caption{A patch satisfying \cref{t:pathwidth} for the class of planar graphs.}
	\label{f:planar}
\end{figure}

Informally,
\cref{t:pathwidth} states that the limiting density of $\mc{F}$ can be achieved by repeatedly gluing copies of a fixed graph in a linear fashion. For example, if $\mc{F}$ is the class of planar graphs then $d(\mc{F})=3$, and a $3$-patch with four vertices and five edges shown on Figure~\ref{f:planar} is an $\mc{F}$-tame patch satisfying \cref{t:pathwidth} in this case.

It is easy to see that $|E(H^n)|$ is a linear function of $n$, as is $|V(H^n)|$. Thus $\lim_{n \to \infty}d(\nm{H^n})$ is rational, and so \cref{t:pathwidth} implies \cref{t:rational} as claimed. To see that \cref{t:pathwidth} implies \cref{t:pathwidth0}, note that the pathwidth of the graph $\nm{H^n}$ is at most $|V(H)|-1$ for any $n$.  Thus, for a $q$-patch $H$ as in \cref{t:pathwidth} the class of all graphs $G$ such that $G \leq \nm{H^n}$ for some $n \in \bb{N}$ satisfies \cref{t:pathwidth0}.

\vskip 10pt
Given graphs $H$ and $G$ we say that a map  $\alpha$ mapping vertices of $H$ to disjoint subsets of $V(G)$  is a \emph{model of $H$ in $G$} if \begin{enumerate}
	\item $G[\alpha(v)]$ is connected for every $v \in V(H)$;
	\item if $v,w \in V(H)$ are adjacent then some edge of $G$ joins a vertex of $\alpha(v)$ to a vertex in $\alpha(w)$.
\end{enumerate}
It is well known that a graph $H$ is a minor of $G$ if and only if there exists a model of $H$ and $G$.

We now extend the definition of the minor relation to patches using models.
We say that a map  $\alpha$  is a \emph{model of a $q$-patch $H$ in a $q$-patch $G$} if  
\begin{enumerate}
	\item $\alpha$ is the model of  $\nm{H}$ in $\nm{G}$, and
	\item $a_{G}(i) \in \alpha(a_{H}(i))$ and $b_{G}(i) \in \alpha(b_{H}(i))$ for every $i \in [q]$.
\end{enumerate}

We say that a patch $H$ is \emph{a minor of a patch $G$} and write $H \leq G$ if there exists a model of $H$ in $G$. 
It is easy to verify that the minor relation is transitive, and that if $H_1,H_2,G_1,G_2$ are patches, such that $H_1 \leq G_1$ and $H_2 \leq G_2$, then $H_1 \times H_2 \leq G_1 \times G_2$. Thus the set of isomorphism classes of patches forms an ordered monoid with respect to the product operation and minor order.

In~\cite{GMXXIII} Robertson and Seymour extend their celebrated Graph Minor theorem to graphs with vertices labeled from a well-quasi order. Their result immediately implies that the minor order is a well order on the set of  isomorphism classes of $q$-patches, as we next show. For simplicity, we only state a very specialized version of the result from~\cite{GMXXIII}.

\begin{thm}[\protect{\cite[1.7]{GMXXIII}}]\label{t:RSwqo} Let $L$ be a finite set. For every positive integer $k$, let $G_k$ be a graph, and let $\phi_k : V(G_k) \to L$ be a map. Then there exists $l>k \geq 1$ and a model $\alpha$ of $G_k$ in $G_l$ such that
	\begin{description}
	\item[(RS)]	For every $v \in V(G_k)$ there exists $w \in \alpha(v)$ such that $\phi_l(w) = \phi_k(v)$. 
	\end{description}
\end{thm}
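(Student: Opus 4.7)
The statement is the finite-label special case of the Robertson--Seymour labeled graph minor theorem, so the natural strategy is to reduce it to the unlabeled Graph Minor Theorem (graphs are well-quasi-ordered under the minor relation). My plan has three steps.

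\textbf{Step 1: Gadget construction.} Fix an arbitrary linear order $c_1 < c_2 < \cdots < c_m$ on $L$. For each label $c_i$, I would construct a rooted graph $(F_i, r_i)$ so that the family $\{F_i\}$ is rigidly distinguishable: if one rooted gadget appears as a rooted minor inside another (the root of the image must lie in the branch set of the original root), then $i$ is strictly smaller than $i'$. A concrete candidate: take $F_i$ to consist of the root $r_i$ with $i$ disjoint pendant ``flags'' attached, each being a $3$-connected graph on, say, $|L|+10$ vertices; the count $i$ of flags is a rooted-minor invariant because each flag contributes its own Hadwiger contribution around the root that cannot be duplicated from the rest.

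\textbf{Step 2: Reduction to the unlabeled GMT.} Form $\hat G_k$ from $G_k$ by attaching, at each $v \in V(G_k)$, a fresh copy of $F_{\phi_k(v)}$ with $v$ identified to $r_{\phi_k(v)}$. The sequence $\hat G_1, \hat G_2, \ldots$ is an infinite sequence of (unlabeled) graphs, so by the Graph Minor Theorem there exist indices $k < l$ and a model $\hat\alpha$ of $\hat G_k$ in $\hat G_l$.

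\textbf{Step 3: Extracting a label-preserving model.} Define $\alpha(v) := \hat\alpha(v) \cap V(G_l)$. The restriction to $G_l$-vertices yields a model of $G_k$ in $G_l$ provided the gadgets do not ``steal'' branch sets; this is handled by noting that an attached $F_i$ is biconnected to $G_k$ only through its root, so its image under $\hat\alpha$ can be rerouted to lie entirely in the union of attached gadgets of $\hat G_l$ sharing branch sets with $\hat\alpha(v)$. By rigid distinguishability applied to each $v$, there exists $w \in V(G_l)$ whose attached gadget $F_{\phi_l(w)}$ receives the image of $F_{\phi_k(v)}$, and $w$ must lie in $\alpha(v)$ since the root maps to the root branch. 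This gives $\phi_l(w) \geq \phi_k(v)$ in our order. A final double-count over all $v \in V(G_k)$ — using that distinct $v$'s force disjoint witness gadgets and that $|\phi_l^{-1}(c_j)| \geq |\phi_k^{-1}(c_j)|$ can only hold for all $j$ if equality holds for all $j$ —  upgrades every inequality $\phi_l(w) \geq \phi_k(v)$ to equality, yielding (RS).

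\textbf{Main obstacle.} The hard part is implementing the rigidity claim in Step 3: preventing a gadget's image from fragmenting across several attached gadgets of $\hat G_l$ in a way that confuses the label comparison. This requires a careful branch-set cleanup, arguing that each gadget image can be reshaped to live inside one attached gadget without destroying the model. For general (not merely finite) well-quasi-ordered label sets the gadget reduction breaks down entirely, and Robertson and Seymour in~\cite{GMXXIII} must instead carry the labels through the structural induction of the entire Graph Minor Theorem proof.
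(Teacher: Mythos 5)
The paper does not prove this theorem; it quotes it directly from Robertson and Seymour's Graph Minors XXIII, whose argument carries the labels through the entire structural machinery of the graph minor theorem rather than reducing to the unlabeled case. Your gadget reduction therefore takes a different route, and unfortunately it has a genuine gap: the ``Main obstacle'' you flag in Step 3 is not a technicality that branch-set cleanup can repair.

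The problem is that the model $\hat\alpha$ given by the unlabeled GMT need not respect the base/gadget partition of $\hat G_l$ at all, so $\alpha(v):=\hat\alpha(v)\cap V(G_l)$ may be empty and there may be nowhere to reroute. A concrete failure: take $L=\{0,1\}$, $F_0=K_1$, and let $F_1$ consist of a root $r$ with a single pendant flag $C$ for some fixed $3$-connected graph $C$. Let $G_k:=C$ with every vertex labeled $0$, and let $G_l:=K_1$ with its one vertex labeled $1$. Then $\hat G_k=C$, while $\hat G_l$ is $C$ with one extra pendant vertex $r$, so $\hat G_k$ is a subgraph of $\hat G_l$ and the model $\hat\alpha$ sends all of $C$ into the flag of $\hat G_l$. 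Every branch set $\hat\alpha(v)$ lies in the attached gadget, so $\alpha(v)=\emptyset$; and since $|V(G_l)|=1<|V(G_k)|$ no amount of reshaping can produce a model of $G_k$ in $G_l$. Thus the unlabeled GMT can hand back a minor pair among the $\hat G_k$'s that carries no information whatsoever about labeled minors among the $G_k$'s, and your Step 2/Step 3 pipeline breaks down. Separately, even granting rigidity, the closing double-count does not upgrade $\phi_l(w)\ge\phi_k(v)$ to equality: the inequalities $|\phi_l^{-1}(c_j)|\ge|\phi_k^{-1}(c_j)|$ for all $j$ do not force equalities when $|V(G_l)|>|V(G_k)|$, which is the generic situation. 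The finite-label minor WQO genuinely does not reduce to the unlabeled statement by gadget attachment; one must run the structural induction with labels, which is exactly what~\cite{GMXXIII} does.
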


\begin{cor} \label{t:wqo}
	For every sequence of $q$-patches $(G_k)_{k \in \bb{N}}$ there exist $l > k \geq 1$  such that $G_k \leq G_l$.
\end{cor}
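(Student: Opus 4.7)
The plan is to reduce \cref{t:wqo} to \cref{t:RSwqo} by encoding the boundary labels of each patch as vertex labels drawn from a fixed finite set, and then checking that a labeled model supplied by \cref{t:RSwqo} automatically satisfies the additional condition needed to be a patch model.

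Concretely, let $L = ([q] \cup \{\bot\}) \times ([q] \cup \{\bot\})$, which is a finite set. For each $q$-patch $G_k$ in the given sequence, define $\phi_k : V(G_k) \to L$ by setting the first coordinate of $\phi_k(v)$ to be the unique $i \in [q]$ with $a_{G_k}(i) = v$, or $\bot$ if no such $i$ exists, and similarly for the second coordinate using $b_{G_k}$. This is well-defined because $a_{G_k}$ and $b_{G_k}$ are injective. Apply \cref{t:RSwqo} to the sequence $(\nm{G_k}, \phi_k)_{k \in \bb{N}}$ to obtain indices $l > k \geq 1$ and a model $\alpha$ of $\nm{G_k}$ in $\nm{G_l}$ satisfying condition (RS).

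It remains to verify that $\alpha$ is a model of $G_k$ in $G_l$ as $q$-patches, i.e.\ that $a_{G_l}(i) \in \alpha(a_{G_k}(i))$ and $b_{G_l}(i) \in \alpha(b_{G_k}(i))$ for every $i \in [q]$. Fix $i \in [q]$ and set $v = a_{G_k}(i)$, so that the first coordinate of $\phi_k(v)$ equals $i$. By (RS), there exists $w \in \alpha(v)$ with $\phi_l(w) = \phi_k(v)$; in particular the first coordinate of $\phi_l(w)$ is $i$, which by definition of $\phi_l$ forces $a_{G_l}(i) = w \in \alpha(a_{G_k}(i))$. The analogous argument using the second coordinate yields $b_{G_l}(i) \in \alpha(b_{G_k}(i))$, completing the verification and hence the proof.

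There is essentially no obstacle beyond setting up the correct finite label alphabet: the injectivity of $a_H$ and $b_H$ is exactly what makes the vertex labeling well-defined, and condition (RS) is tailored to transport labels along the model. The only subtlety to keep in mind is that a single vertex may simultaneously lie in $\dlH$ and $\drH$ (possibly with different indices), which is why $L$ must record an ordered pair rather than a single element.
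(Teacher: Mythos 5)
Your proof is correct and takes essentially the same route as the paper: reduce to \cref{t:RSwqo} by labeling boundary vertices with a finite alphabet, then check that condition (RS) forces the resulting labeled model to respect the patch boundaries. Your alphabet $L = ([q]\cup\{\bot\})\times([q]\cup\{\bot\})$, which records the $a$- and $b$-indices independently, is in fact a bit more robust than the paper's $L = \{0\}\cup\{(i,j)\}_{i\in[q],j\in[3]}$: the paper's labeling rule is not well-defined on a vertex with $a_{G_k}(i)=b_{G_k}(j)$ for distinct $i,j$, a corner case you correctly anticipate and handle.
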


\begin{proof} 
	Let $L = \{0\} \cup \{(i,j)\}_{i \in [q],j \in [3]},$ and define $\phi_k:V(G_k) \to L$ by setting
	\begin{itemize} 
	 \item	$\phi_k(a_{G_k}(i))=(i,1)$,  $\phi_k(b_{G_k}(i))=(i,2)$, if $a_{G_k}(i) \neq b_{G_k}(i)$, and $\phi_k(a_{G_k}(i))=(i,3)$, if $a_{G_k}(i) = b_{G_k}(i)$;
	\item $\phi_k(v)=0$ for every $v \in V(G_k) - \dd{G}_k$.
	\end{itemize}
	By \cref{t:RSwqo} there exist $l > k \geq 1$  and a model $\alpha$ of $\nm{G_k}$ in $\nm{G_l}$  satisfying (RS). It follows that $\alpha$ is a model of $G_k$ in $G_l$, as desired.
\end{proof}

A $q$-patch $H$ is \emph{linked} if $\nm{H}$ contains a linkage $\mc{P}=(P_i)_{i \in [q]}$  such $P_i$ has ends $a_H(i)$ and $b_H(i)$ for every  $i \in [q]$. We say that a linkage with the above properties is a \emph{linkage of $H$}. Note that a $q$-patch $H$ is linked if and only if $I_q \leq H$. In particular, for every pair of $q$-patches $H,H'$ such that $H'$ is linked, we have $H \leq H \times H'$ and $H \leq H' \times H$. We say that a patch $H$ is \emph{refined} if it is non-degenerate and linked.
 
The following lemma is our first application of \cref{t:wqo} en route to \cref{t:pathwidth}.

\begin{lem} \label{l:multiply}
	For every proper  minor-closed graph class $\mc{F}$ and every integer $q \geq 0$ there exists a positive integer $K=K_{\ref{l:multiply}}(\mc{F},q)$ such that if a $q$-patch $H$ is linked and  $\nm{ H^K} \in \mc{F}$  then $H$ is $\mc{F}$-tame.
\end{lem}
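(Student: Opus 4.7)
The plan is to argue by contradiction using \cref{t:wqo}. Suppose no $K = K_{\ref{l:multiply}}(\mc{F},q)$ works. Then for every $K \geq 1$ there is a linked $q$-patch $H_K$ satisfying $\nm{H_K^K} \in \mc{F}$ but with $H_K$ not $\mc{F}$-tame, i.e.\ $\nm{H_K^{N_K}} \notin \mc{F}$ for some $N_K \geq 1$.

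Next I would apply \cref{t:wqo} to the sequence $(H_K)_{K \in \bb{N}}$ and upgrade it, via the standard argument, to an infinite ascending chain. Concretely, call an index $k$ \emph{maximal} if there is no $l > k$ with $H_k \leq H_l$. The set of maximal indices must be finite, since otherwise the subsequence indexed by them would violate \cref{t:wqo}. Beyond this finite set, I can greedily build indices $K_1 < K_2 < K_3 < \cdots$ with $H_{K_1} \leq H_{K_2} \leq H_{K_3} \leq \cdots$ in the minor order on patches.

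Now I use the two basic monotonicity properties of the patch product noted just before the lemma. First, since $H \leq G$ implies $H^n \leq G^n$ for every $n$ (taking $n$-fold products of models coordinatewise), we obtain for every $m \geq 1$:
\[
H_{K_1}^{K_m} \leq H_{K_m}^{K_m}.
\]
Because $\nm{H_{K_m}^{K_m}} \in \mc{F}$ and $\mc{F}$ is minor-closed, this gives $\nm{H_{K_1}^{K_m}} \in \mc{F}$. Second, since $H_{K_1}$ is linked, $I_q \leq H_{K_1}$, and hence $H_{K_1}^j \leq H_{K_1}^{K_m}$ whenever $j \leq K_m$; therefore $\nm{H_{K_1}^j} \in \mc{F}$ for every $j \leq K_m$. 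Letting $m \to \infty$, the bound $K_m$ grows without limit, so $\nm{H_{K_1}^j} \in \mc{F}$ for all $j \geq 1$. That is, $H_{K_1}$ is $\mc{F}$-tame, contradicting our choice of $H_{K_1}$.

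There is no real obstacle here beyond correctly extracting the ascending subsequence from \cref{t:wqo} and being careful about the two monotonicity observations (minor monotonicity of the product, and the role of linkedness in making $H^j$ a minor of $H^K$ for $j \leq K$). The argument is a compactness-style use of well-quasi-ordering, and it uses nothing about $\mc{F}$ beyond being minor-closed and proper (properness is needed only implicitly to ensure $\mc{F}$ is interesting; the contradiction itself goes through as stated).
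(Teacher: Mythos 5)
Your proof is correct, but it takes a genuinely different route from the paper. The paper's proof is a direct antichain construction: it builds an increasing sequence of integers $(n_i)$ and linked patches $(H_i)$ with $\nm{H_i^{n_i}} \in \mc{F}$ and $\nm{H_i^{n_{i+1}}} \notin \mc{F}$, and then observes that $H_i \not\leq H_j$ for all $i<j$ (since $H_i \leq H_j$ together with linkedness of $H_j$ and $n_{i+1} \leq n_j$ would force $\nm{H_i^{n_{i+1}}} \in \mc{F}$). That exhibits an infinite antichain, contradicting \cref{t:wqo} outright. Your argument instead upgrades \cref{t:wqo} to the standard consequence that every sequence in a wqo has an infinite ascending subsequence, and then shows the first element of that chain is already $\mc{F}$-tame by letting the available power tend to infinity along the chain. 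Both arguments rest on exactly the same two ingredients (product monotonicity of the minor order on patches, and linkedness giving $H^j \leq H^K$ for $j \leq K$); the paper's version is more economical in that it only needs the literal statement of \cref{t:wqo} rather than the ``every sequence has an increasing subsequence'' corollary, and it sidesteps the need to take a limit over $m$. Your version is perhaps more conceptually transparent: rather than engineering the powers $n_i$ to grow fast enough to kill any potential comparability, it just picks any infinite chain and reads off tameness of its bottom element. One small remark on your final paragraph: properness of $\mc{F}$ plays no role at all in this lemma (even implicitly); the statement and proof hold verbatim for any minor-closed $\mc{F}$.
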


\begin{proof}
	Suppose the lemma fails for some $\mc{F}$ and $q$. Then there exists an increasing sequence of integers $(n_i)_{i \in \bb{N}}$ and a sequence of linked
	$q$-patches $(H_i)_{i \in \bb{N}}$ such that $\nm{H_i^{n_i}} \in \mc{F}$ and $\nm{H_i^{n_{i+1}}} \not \in \mc{F}$.
	It follows that $H_i \not \leq H_j$ for all integers $1 \leq i < j$, in contradiction with \cref{t:wqo}.
\end{proof}

For a patch $H$ let $$e(H)=|E(H)| - |E(\nm{H}[\dlH])|.$$ 
Thus $e(H)$ counts the number of edges of the patch $H$, except for the edges induced by the left boundary. We dismiss those edges, as they may overlap preexisting edges once $H$ is glued to another patch. The following observation is immediate.  

\begin{obs} \label{o:edgesum} 
	Let $H_1,H_2,\ldots, H_k$ be $q$-patches.
	Then $$e(H_1 \times H_2 \times \ldots \times H_k) \geq \sum_{i=1}^{k}e(H_i). $$
\end{obs}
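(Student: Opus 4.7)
The plan is to reduce to the case $k=2$ by straightforward induction on $k$, using the associativity of the product, and then give a direct edge-counting argument for $k=2$.

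First I would reformulate the definition: since $|E(\nm{H}[\dlH])|$ counts edges of $\nm{H}$ with both ends in $\dlH$, we have
\[
e(H) = |\{e \in E(H) : e \text{ has at least one end in } V(H) \setminus \dlH\}|.
\]
With this reformulation, I would analyze how edges of $H_1 \times H_2$ arise. Every edge of $\nm{H_1 \times H_2}$ is the image of an edge of $\nm{H_1}$, of $\nm{H_2}$, or of both (the last case being \emph{shared edges}, where an edge of $\nm{H_1}$ with both ends in $\partial_r H_1$ is identified with an edge of $\nm{H_2}$ with both ends in $\partial_l H_2$ under the identification $b_{H_1}(i)\sim a_{H_2}(i)$).

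The key observations are then: (i) an edge of $\nm{H_1}$ contributing to $e(H_1)$ has an endpoint in $V(H_1) \setminus \partial_l H_1$, and this endpoint persists unchanged in $V(H_1 \times H_2) \setminus \partial_l(H_1 \times H_2)$ after gluing, so its image contributes to $e(H_1 \times H_2)$; (ii) similarly, an edge of $\nm{H_2}$ contributing to $e(H_2)$ has an endpoint in $V(H_2) \setminus \partial_l H_2$, which is untouched by gluing and hence lies outside $\partial_l H_1 = \partial_l(H_1 \times H_2)$, so its image is again counted by $e(H_1 \times H_2)$; (iii) distinct edges of $\nm{H_1}$ (respectively $\nm{H_2}$) give distinct images, and the two sets of images in (i) and (ii) are disjoint, because any shared edge of $H_2$ has both ends in $\partial_l H_2$ and thus is never counted in $e(H_2)$. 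Combining (i)--(iii) yields $e(H_1 \times H_2) \geq e(H_1)+e(H_2)$, which is the $k=2$ case, and the general case follows by induction using $H_1 \times \cdots \times H_k = (H_1 \times \cdots \times H_{k-1}) \times H_k$.

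The one subtlety to handle carefully is that $\partial_l H_1$ and $\partial_r H_1$ may intersect, so vertices of $\partial_l H_1$ can get identified with vertices of $\partial_l H_2$ in $H_1 \times H_2$. This can make $|E(\nm{H_1\times H_2}[\partial_l(H_1\times H_2)])|$ strictly larger than $|E(\nm{H_1}[\partial_l H_1])|$, which is why the naïve approach of directly comparing the two ``boundary subgraph'' terms fails. The reformulation via ``edges with an endpoint outside $\dlH$'' sidesteps this obstacle: the relevant identifications can only move previously-outside endpoints into $\partial_l$, never the reverse, so no edge contributing to $e(H_i)$ is lost in the product.
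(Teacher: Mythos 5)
Your proof is correct, and it fills in exactly the argument the paper leaves implicit: the paper calls the observation ``immediate,'' with the preceding sentence (``we dismiss those edges, as they may overlap preexisting edges once $H$ is glued to another patch'') indicating precisely the mechanism you verify — that the only edges which can collide under the identification $b_{H_1}(i)\sim a_{H_2}(i)$ are those induced on $\partial_r H_1$ and $\partial_l H_2$, and the latter are already excluded from $e(H_2)$. Your careful handling of the $\partial_l H_1\cap\partial_r H_1\ne\emptyset$ case and the reformulation of $e(H)$ as a count of edges with an endpoint outside $\partial_l H$ are both sound.
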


\begin{cor}\label{c:limitdensity}
	Let $H$ be a non-degenerate $q$-patch then
	\begin{equation}\label{e:powerdensity}
	\lim_{n \to \infty}d(\nm{H^n}) \geq \frac{e(H)}{|V(H)|-q}.
	\end{equation}
\end{cor}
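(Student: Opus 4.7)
\emph{Proof plan.} The plan is a short calculation combining the exact vertex count from \eqref{e:vproduct} with the edge inequality of \cref{o:edgesum}.

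First, I would iterate \eqref{e:vproduct} on $H^n = H \times H \times \cdots \times H$ ($n$ factors) to obtain $|V(H^n)| = n(|V(H)|-q) + q$; non-degeneracy of $H$ yields $|V(H)|-q \geq 1$, so $|V(H^n)|$ grows linearly in $n$ with positive slope. Next, applying \cref{o:edgesum} to the same factorization gives $e(H^n) \geq n\, e(H)$, and since $|E(H^n)| = e(H^n) + |E(\nm{H^n}[\partial_l H^n])| \geq e(H^n)$ directly from the definition of $e(\cdot)$, we conclude
$$|E(H^n)| \;\geq\; n\, e(H).$$
Dividing by $|V(H^n)|$ yields
$$d(\nm{H^n}) \;\geq\; \frac{n\, e(H)}{n(|V(H)|-q)+q},$$
and the right-hand side tends to $e(H)/(|V(H)|-q)$ as $n \to \infty$, establishing \eqref{e:powerdensity}.

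To justify writing $\lim$ rather than $\liminf$ on the left, I would note that \cref{o:edgesum} implies $e(H^n)$ is superadditive in $n$, so Fekete's lemma gives convergence of $e(H^n)/n$; meanwhile $|E(\nm{H^n}[\partial_l H^n])|$ is a non-decreasing sequence (extra factors on the right only add edges within the fixed left boundary, never remove them) bounded above by $\binom{q}{2}$, hence it converges as well. Combined with the exact linearity of $|V(H^n)|$, this shows $d(\nm{H^n})$ has a genuine limit. There is no serious obstacle in this proof—it is a direct consequence of the preceding observation, the only point worth flagging being that one should \emph{not} assume the naive equality $|E(\nm{H^n}[\partial_l H^n])| = |E(\nm{H}[\partial_l H])|$, since boundary edges can proliferate when $\partial_l H$ and $\partial_r H$ overlap; the inequality $|E(H^n)| \geq e(H^n)$ is all that is needed, and it bypasses this subtlety entirely.
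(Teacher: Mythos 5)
Your proof is correct and follows essentially the same computation as the paper: iterate \eqref{e:vproduct} to get $|V(H^n)| = n(|V(H)|-q)+q$, apply \cref{o:edgesum} to bound $|E(H^n)| \geq n\,e(H)$, and take $n \to \infty$. The extra paragraph justifying the existence of the limit (via superadditivity of $e(H^n)$ and monotonicity of the boundary-edge count) is a valid supplement; the paper instead disposes of this point with the one-line remark, made just before stating the corollary, that $|E(H^n)|$ and $|V(H^n)|$ are linear in $n$.
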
	

\begin{proof}
Using \cref{o:edgesum} and 	\eqref{e:vproduct} we obtain
$$ d(\nm{H^n}) = \frac{|E(H^n)|}{|V(H^n)|} \geq \frac{n \cdot e(H)}{n|V(H)|- (n-1)q} = \frac{e(H)}{|V(H)|-q + \frac{q}{n}},$$
implying the corollary.
\end{proof}

\subsection*{Embedding patches and patchworks}

We are primarily interested in patches obtained by ``cutting out'' subgraphs of a large graph along their boundary. To formalize this, 
we say that a patch $H$ is \emph{embedded} in a graph $G$ if $\nm{H}=G[X]$ for some $X \subseteq V(G)$, and 
for every $v \in V(G)-X$ either all the neighbors of $v$ in  $X$ lie in $\dlH$,  or they all lie in $\drH$. In particular, if $H$ is embedded in $G$ then  $(V(H), V(G) - (V(H) - \dH))$ is a separation of $G$. 

Let $H$ be a linked $q$-patch embedded in a graph $G$. We denote by $G \slash H$   the graph obtained from $G$ by deleting the vertices in $V(H) - \dH$, the edges in $E(H) - E(G[\dlH])$, and identifying $a_H(i)$ with $b_H(i)$  for every $i \in [q]$. Note that $G \slash H \leq G$, as one can obtain $G  \slash H$ from $G$ by contacting a linkage of $H$ prior to deleting its vertices and edges.

\begin{lem}\label{l:contract1} Let $H$ be a linked $q$-patch embedded in $G$. Then $$|E(G \slash H)| = |E(G)|-e(H).$$ 	
\end{lem}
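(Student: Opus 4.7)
The plan is to track, through each of the three operations defining $G \slash H$, exactly which edges of $G$ survive, and verify that the final identification step neither destroys nor merges edges. The key identity to establish is
\[
|E(G \slash H)| = |E(G)| - |E(H)| + |E(\nm{H}[\dlH])|,
\]
which is precisely $|E(G)| - e(H)$ by the definition of $e(H)$.

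First, partition $E(G)$ into two sets: the edges with both endpoints in $V(H)$ (which form $E(H)$, since $\nm{H}=G[V(H)]$), and the edges with at least one endpoint in $V(G)-V(H)$. When we delete the vertices of $V(H)-\dH$, we remove precisely those edges of $E(H)$ that have an endpoint in $V(H)-\dH$, because the embedding condition guarantees that no edge of $G$ joins $V(G)-V(H)$ to $V(H)-\dH$. Subsequently deleting the edges of $E(H) - E(G[\dlH])$ eliminates the remaining edges of $E(H)$ except those lying inside $G[\dlH]$. At this stage the surviving edges are exactly $E(G[\dlH])$ together with all edges of $G$ having an endpoint in $V(G)-V(H)$, giving $|E(G)| - |E(H)| + |E(G[\dlH])|$ edges.

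The main (only) subtlety is the final identification of $a_H(i)$ with $b_H(i)$ for $i \in [q]$: since $G \slash H$ is a simple graph, I need to verify that this step does not create parallel edges that get collapsed, which would spoil the count. There are three kinds of surviving edges to inspect. An edge of $E(G[\dlH])$ joining $a_H(i)$ and $a_H(j)$ with $i \neq j$: a potentially parallel edge after identification would have to join $\{a_H(i),b_H(i)\}$ to $\{a_H(j),b_H(j)\}$, i.e.\ come from an edge among $\{a_H(i)b_H(j),\ b_H(i)a_H(j),\ b_H(i)b_H(j)\}$, but each such edge lies in $E(H)$ with both endpoints in $\dH$ and hence was removed in one of the first two steps. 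An edge from $v \in V(G)-V(H)$ to $a_H(i)$ (or $b_H(i)$): a parallel edge would have to come from an edge joining $v$ to $b_H(i)$ (resp.\ $a_H(i)$), but by the embedding condition the neighbors of $v$ in $V(H)$ lie entirely in $\dlH$ or entirely in $\drH$, ruling this out. Edges with both endpoints in $V(G)-V(H)$ are unaffected by the identification. Finally, no edge becomes a loop, since $a_H(i) \neq a_H(j)$ and $b_H(i) \neq b_H(j)$ for $i \neq j$ by the injectivity of $a_H$ and $b_H$.

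Thus the identification step preserves the edge count exactly, and we conclude $|E(G \slash H)| = |E(G)| - |E(H)| + |E(G[\dlH])| = |E(G)| - e(H)$. The entire argument is essentially bookkeeping; the only place where a hypothesis of the lemma is really used is in ruling out parallel edges through the embedding condition (the ``linked'' assumption is not needed for the count, although it is needed elsewhere to make $G \slash H$ a minor of $G$).
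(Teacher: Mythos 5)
Your overall plan is right and closely mirrors the paper's proof, but there is a concrete error: your claim that ``the `linked' assumption is not needed for the count'' is false, and the no-loop step that relies only on the injectivity of $a_H$ and $b_H$ is not a valid argument. The problem is that injectivity of each map separately does not prevent coincidences of the form $b_H(i)=a_H(j)$ with $i\neq j$. If such a coincidence occurs, then the identification $a_H(k)\sim b_H(k)$ for all $k$ chains together, merging $a_H(i),\ b_H(i)=a_H(j),\ b_H(j)$ into a single vertex. In that case an edge of $G[\partial_l H]$ joining $a_H(i)$ to $a_H(j)$ (which you correctly mark as surviving) becomes a loop, and two edges from an outside vertex $v$ to $a_H(i)$ and to $a_H(j)$ (both in $\partial_l H$, so allowed by the embedding condition) become parallel --- neither of these failure modes is caught by your case analysis, which tacitly assumes every identification class is exactly $\{a_H(i),b_H(i)\}$.

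What makes the identification classes be exactly the pairs $\{a_H(i),b_H(i)\}$ is precisely the hypothesis that $H$ is linked: the paths $P_i$ of a linkage are pairwise vertex-disjoint and each $P_i$ contains both $a_H(i)$ and $b_H(i)$, so the sets $\{a_H(i),b_H(i)\}$ are pairwise disjoint. This disjointness is what underlies both your parallel-edge check (``a potentially parallel edge would have to join $\{a_H(i),b_H(i)\}$ to $\{a_H(j),b_H(j)\}$'') and the no-loop check. So the linked hypothesis is genuinely used in the count, not only (as you say) to make $G\slash H$ a minor of $G$. To fix the proof, replace the last sentence with an explicit observation that linkedness makes $\{a_H(i),b_H(i)\}_{i\in[q]}$ pairwise disjoint, so each identification class has size at most two; then your three-case verification goes through as written.
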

\begin{proof}
 Let $G' = G \slash H$. By definition $e(H)$ edges are deleted to obtain $G'$ from $G$, and so it suffices to check that no parallel edges are created during the identification of $\dlH$ and $\drH$. Indeed, if an edge $e \in E(H)$ is not deleted then both of its ends are in $\dlH$, and so no two such edges are identified. Moreover, a vertex $v \in V(G)-V(H)$ can not have neighbors both in $\dlH -\drH$, and in $\drH -\dlH$. Thus a pair of edges incident to such a vertex can not become parallel in the course of identification either. 
\end{proof}

Our proof works not just with individual patches embedded in graph, but with their ordered sequences, defined as follows.
A \emph{$q$-patchwork} (or simply a \emph{patchwork}) is an ordered sequence of $q$-patches $\mc{H}= ({H}_1,{H}_2,\ldots,{H}_{l})$. The \emph{vertex set} $V(\mc{H})$ of the $q$-patchwork $\mc{H}$ is defined by $V(\mc{H}) = \cup_{i=1}^{l}V(H_i)$.
For $J \subseteq [l]$ we say that the $q$-patchwork $\mc{H}|_{J}=(H_j)_{j \in J}$ is the \emph{restriction of $\mc{H}$ to $J$}.

Let $$\nm{\mc{H}}=\nm{{H}_1 \times {H}_2 \times \ldots \times {H}_{l}}$$
denote the graph obtained by taking the product of the patches of $\mc{H}$ in order. We say that a patchwork $\mc{H}$ is \emph{linked} (resp. \emph{refined}) if every patch in $\mc{H}$ is linked (resp. refined).  The following useful observation is immediately implied by the properties of linked patches given in the previous subsection.

\begin{obs}\label{o:patchwork1} Let $\mc{H}$ be a linked $q$-patchwork then for every $J \subseteq |\mc{H}|$ we have
$\nm{\mc{H}|_J} \leq \nm{\mc{H}}$.
\end{obs}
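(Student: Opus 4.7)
The plan is to pass through the stronger patch-level statement
\[
\prod_{j \in J} H_j \;\leq\; \prod_{i \in [l]} H_i \qquad \text{as $q$-patches,}
\]
which in particular implies the graph-level inequality stated in the observation, since any model of a $q$-patch in a $q$-patch is by definition a model of the underlying graph in the underlying graph.

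The main tool is already quietly isolated in the paragraph introducing linked patches: for any $q$-patch $H$ and any linked $q$-patch $H'$ we have $H \leq H \times H'$ and $H \leq H' \times H$. Indeed, since $H'$ is linked, $I_q \leq H'$, and combining this with product-monotonicity of the minor order (namely $X_1 \leq Y_1$ and $X_2 \leq Y_2$ imply $X_1 \times X_2 \leq Y_1 \times Y_2$) together with the fact that $I_q$ is the identity for the product yields a uniform principle I will use: for any $q$-patches $A,C$ and any linked $q$-patch $B$,
\[
A \times C \;=\; A \times I_q \times C \;\leq\; A \times B \times C.
\]

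The observation then follows by induction on $|[l] \setminus J|$. The base case $J=[l]$ is trivial. For the inductive step, pick any index $k \in [l] \setminus J$, let $A = H_1 \times \cdots \times H_{k-1}$ and $C = H_{k+1} \times \cdots \times H_l$, and apply the displayed principle with $B = H_k$ (which is linked by hypothesis on the patchwork) to obtain
\[
\prod_{i \in [l] \setminus \{k\}} H_i \;\leq\; \prod_{i \in [l]} H_i.
\]
The inductive hypothesis applied to the shorter linked patchwork $\mc{H}|_{[l] \setminus \{k\}}$ and the same set $J$ gives $\prod_{j \in J} H_j \leq \prod_{i \in [l] \setminus \{k\}} H_i$, and transitivity of the minor order on patches finishes the argument.

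There is no genuine obstacle: the observation is essentially a restatement of the fact that, in the ordered monoid of isomorphism classes of $q$-patches under the product, every linked patch dominates the identity $I_q$, so linked factors can be dropped. The only care needed is to respect the ordering of the patchwork so that associativity lets us isolate one linked factor at a time, which is exactly what the induction above achieves.
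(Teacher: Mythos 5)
Your proof is correct and is essentially the argument the paper has in mind: the observation is stated without proof as an immediate consequence of the facts that a linked patch dominates $I_q$, that $I_q$ is the identity, and that the patch product is monotone in the minor order, and your induction on $|[l]\setminus J|$ simply makes that deduction explicit, one dropped linked factor at a time.
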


We say that  a $q$-patchwork  $\mc{H}=(H_1,H_2,\ldots,H_l)$ is \emph{embedded} in a graph $G$ if
\begin{description}
	\item[(E1)] Every patch in $\mc{H}$ is embedded in $G$;
	\item[(E2)] If $v \in V(H_i) \cap V(H_j)$ for some $i,j \in [l]$, $i \neq j$  then $v \in \dlH_k \cap \drH_k$ for all $k \in [l]$, and we say that such a vertex $v$ is \emph{$\mc{H}$-global}; 
	\item[(E3)] For every $v \in V(\mc{H})$ and every $i \in [l]$ such that $v$ has a neighbor in $V(H_i)$ we have $v \in V(H_i)$;
	\item[(E4)] For every $v \in V(G) - V(\mc{H})$ there exists $i \in [l]$ such that either all the neighbors of $v$ in  $V(\mc{H})$ lie in $\dlH_i$ or they are all in $\drH_i$.	
\end{description}

Let $\mc{H}= (H_i)_{i \in [l]}$ be a  $q$-patchwork  embedded in  a graph $G$,  and let $I=\{i_1,i_2,\ldots,i_j\} \subseteq [l]$, we denote $G \slash H_{i_1} \slash H_{i_2} \ldots \slash H_{i_j}$ by $G \slash (\mc{H},I)$.

\begin{lem}\label{l:contract2prelim} Let $\mc{H}=({H}_1,{H}_2,\ldots,{H}_{l})$ be a  patchwork embedded in a graph $G$. Then the patchwork $\mc{H}|_{[l]-\{i\}}$ is a patchwork embedded in $G \slash H_i$ for every $i \in [l]$ such that $H_i$ is linked.
\end{lem}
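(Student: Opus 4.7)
The plan is to verify the four embedding conditions (E1)--(E4) for the restricted patchwork $\mc{H}' := \mc{H}|_{[l]-\{i\}}$ in the contracted graph $G' := G \slash H_i$. The central structural observation I would lean on is that, by condition (E2) applied to $\mc{H}$, any vertex $v \in V(H_k) \cap V(H_i)$ with $k \neq i$ is $\mc{H}$-global and thus lies in $\dlH \cap \drH$ for the patch $H_i$. As a consequence, $V(H_k) \cap (V(H_i) - \dH) = \emptyset$, so no vertex of $V(H_k)$ is deleted in forming $G'$. Likewise, no edge of $\nm{H_k}$ is deleted: any edge of $\nm{H_k}$ that lies in $E(H_i)$ has both endpoints in $V(H_k) \cap V(H_i) \subseteq \dlH$, hence belongs to $E(G[\dlH])$ (with respect to $H_i$) and survives the definition of $G \slash H_i$.

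Given this foundation, the verification of (E1) for each $H_k$ with $k \neq i$ reduces to showing that the identifications $a_{H_i}(j) \sim b_{H_i}(j)$ do not distort the induced subgraph on $V(H_k)$, and that the outer boundary condition for $H_k$ persists. For the induced subgraph claim, whenever an identified endpoint lies in $V(H_k)$ it is $\mc{H}$-global, and the identification either merges two vertices that are already forced to be identified by the product structure, or takes place disjointly from $V(H_k)$. For the outer boundary condition, since all vertices removed or modified belong to $V(H_i) \subseteq V(\mc{H})$, any $v \in V(G') - V(H_k)$ with a neighbor in $V(H_k)$ comes from a vertex of $V(G) - V(H_k)$ whose neighbors already satisfy the boundary condition in $G$. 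Conditions (E2) and (E3) then follow by restriction: $\mc{H}'$-global vertices are a subset of $\mc{H}$-global vertices and hence remain in $\dlH \cap \drH$ for every patch $H_n$ with $n \neq i$. Finally, (E4) follows because any surviving $v \in V(G') - V(\mc{H}')$ either comes from $V(G) - V(\mc{H})$ (inheriting the condition directly) or is a boundary vertex of $H_i$, in which case its neighbors in $V(\mc{H}')$ all lie in the left or right boundary of some $H_k$ by the same boundary arguments.

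The main obstacle, and the one step where care is needed, is the bookkeeping of the identifications $a_{H_i}(j) \sim b_{H_i}(j)$: an $\mc{H}$-global vertex may a priori appear in $\dlH$ and $\drH$ of $H_i$ under different indices $j$, and the collapse of each index-pair must be shown to be compatible with the patch structure of every $H_k$. A short case analysis on whether neither, one, or both of $a_{H_i}(j)$ and $b_{H_i}(j)$ lie in $V(H_k)$ handles this, using the $\mc{H}$-globality condition (together with the hypothesis that $H_i$ is linked, which guarantees $G \slash H_i$ is indeed well-defined as a minor of $G$) to force the relevant identifications to be consistent. No deep combinatorics is required beyond unwinding the definitions.
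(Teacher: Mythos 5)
The paper's own ``proof'' of this lemma is a single sentence (``a routine verification shows that the conditions (E1)--(E4) are preserved...''), so your proposal, which actually carries out that verification, is taking the same route; there is no alternative strategy to compare against. Most of what you write is correct: the observation that shared vertices are $\mc{H}$-global and hence lie in $\dlH_i \cap \drH_i$, that consequently no vertex or edge of any $H_k$ ($k\neq i$) is deleted, and that (E2)--(E4) restrict cleanly, are all sound.

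However, the one point you yourself single out as needing care -- the identifications $a_{H_i}(j)\sim b_{H_i}(j)$ -- is not actually resolved by the argument you sketch. The phrase ``merges two vertices that are already forced to be identified by the product structure'' does not correspond to anything in the definitions: nothing in the notion of an embedded patchwork forces $a_{H_i}(j)=b_{H_i}(j)$ for a global vertex a priori. Likewise, the parenthetical crediting linkedness with making $G\slash H_i$ ``well-defined as a minor of $G$'' mis-states the role linkedness actually plays here. The missing observation is this: if $v$ is $\mc{H}$-global, then $v\in\dlH_i\cap\drH_i$, so $v=a_{H_i}(j_1)=b_{H_i}(j_2)$ for some $j_1,j_2$. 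Fixing a linkage $(P_1,\dots,P_q)$ of $H_i$, the vertex $v$ is an endpoint of both $P_{j_1}$ and $P_{j_2}$; since these paths are vertex-disjoint, $j_1=j_2$, and hence $a_{H_i}(j_1)=b_{H_i}(j_1)=v$. Thus every identification touching a global vertex is trivial, and since $a_{H_i},b_{H_i}$ are injective, $v$ is not involved in any other identification either. This is the precise reason each $V(H_k)$ is fixed pointwise by the contraction, and it is where the hypothesis that $H_i$ is linked is genuinely used. With that step filled in, the rest of your verification goes through.
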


\begin{proof}
A routine verification shows that the conditions (E1)--(E4) are preserved by contraction of $H_i$, and that the remaining patches of $\mc{H}$ are unchanged.
\end{proof}

The next corollary is an immediate consequence of Lemmas~\ref{l:contract1} and~\ref{l:contract2prelim}.

\begin{cor}\label{l:contract2} Let $\mc{H}$ be a refined $q$-patchwork embedded in a graph $G$, and  let $l = |\mc{H}|$. Then for every $I \subseteq [l]$, we have $G \slash (\mc{H},I) \leq G$ and
	$$|E(G \slash (\mc{H},I))| = |E(G)|-\sum_{i \in I}e(H_i).$$
\end{cor}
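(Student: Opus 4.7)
The plan is to induct on $|I|$, using \cref{l:contract1} and \cref{l:contract2prelim} as the single-step reduction. The base case $I = \emptyset$ is immediate: $G \slash (\mc{H}, \emptyset) = G$, the sum on the right-hand side is empty, and $G \leq G$ trivially.

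For the inductive step, fix $i \in I$. Since $\mc{H}$ is refined, $H_i$ is linked, so \cref{l:contract2prelim} applies: $\mc{H}' := \mc{H}|_{[l]-\{i\}}$ is a patchwork embedded in $G' := G \slash H_i$, and since the patches in $\mc{H}'$ are literally the patches $H_j$ for $j \neq i$ (unchanged by the contraction operation on the ambient graph), the patchwork $\mc{H}'$ remains refined. By \cref{l:contract1} applied to $H_i$ embedded in $G$, we have $G' \leq G$ and $|E(G')| = |E(G)| - e(H_i)$. The inductive hypothesis, applied to $\mc{H}'$ in $G'$ with the index set $I - \{i\} \subseteq [l] - \{i\}$, yields $G' \slash (\mc{H}', I - \{i\}) \leq G'$ and
\[ |E(G' \slash (\mc{H}', I - \{i\}))| = |E(G')| - \sum_{j \in I - \{i\}} e(H_j). \]
Observing that $G' \slash (\mc{H}', I - \{i\})$ is by definition $G \slash (\mc{H}, I)$ (the order in which the contractions $\slash H_j$ are performed does not matter, since contraction is commutative and each $\slash H_j$ only involves the vertex set of $H_j$ together with boundary identifications), we combine with $G' \leq G$ and transitivity of the minor relation to finish.

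There is no real obstacle here; the statement is essentially a bookkeeping consequence of the two preceding lemmas. The only subtle point that warrants explicit comment is why iterating single-patch contractions in different orders yields the same graph (up to isomorphism) and why the intermediate patchworks remain refined; both follow because the patches $H_j$ are fixed combinatorial objects whose linkedness and non-degeneracy are intrinsic properties independent of the ambient graph $G$.
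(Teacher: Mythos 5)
Your proof is correct and takes essentially the same route as the paper: the paper presents this corollary as an "immediate consequence" of Lemmas~\ref{l:contract1} and~\ref{l:contract2prelim}, which is precisely the induction on $|I|$ you spell out. (Minor simplification: if you always peel off the smallest index $i_1 \in I$, the identity $G' \slash (\mc{H}', I - \{i_1\}) = G \slash (\mc{H}, I)$ follows directly from the definition of $G \slash (\mc{H}, I)$ as a left-to-right composition, with no need to invoke commutativity of the contractions.)
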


We say that a $q$-patchwork  $\mc{H}=(H_1,\ldots,H_l)$ is \emph{stitched to $G$ by  a collection of paths $\mc{P}=(P_{i,j})_{i \in [q], j \in [l-1]}$} if 	
	\begin{itemize}
		\item $\mc{H}$ is embedded in $G$;
		\item the path $P_{i,j}$ has ends $b_{H_j}(i)$ and $a_{H_{j+1}}(i)$, and $P_{i,j}$ is otherwise disjoint from $V(\mc{H})$;
		\item if $V(P_{i,j}) \cap V(P_{i',j'}) \neq \emptyset$ for some $j \leq j'$ then $i =i'$ and either $j=j'$ or $j=j'-1$. In the second case,   $P_{i,j} \cap P_{i',j'}$ is a common subpath of these two paths with an end $a_{H_{j'}}(i)=b_{H_{j'}}(i)$.
	\end{itemize} 
We say that $\mc{H}$ is \emph{stitched to $G$} if  $\mc{H}$ is stitched to $G$ by some collection of paths.

\begin{lem}\label{l:patchwork2}
	Let $\mc{H}$ be a  patchwork stitched to a graph $G$. Then $\nm{\mc{H}} \leq G$.
\end{lem}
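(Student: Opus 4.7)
The plan is to exhibit an explicit minor model $\alpha$ of $\nm{\mc{H}}$ in $G$ that realizes each identification $b_{H_j}(i) \sim a_{H_{j+1}}(i)$ from the product operation by contracting the corresponding stitching path $P_{i,j}$, and sends every ``internal'' vertex of $\nm{\mc{H}}$ to the matching vertex of $G$ given by the embedding. Intuitively, deleting the part of $G$ outside of $V(\mc{H}) \cup \bigcup V(P_{i,j})$ and then contracting every $P_{i,j}$ to a single vertex should produce $\nm{\mc{H}}$ exactly.

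To formalize this, I would view $V(\nm{\mc{H}})$ as the quotient of the disjoint union $\bigsqcup_{j \in [l]} V(H_j)$ by the equivalence relation generated by $b_{H_j}(i) \sim a_{H_{j+1}}(i)$. For each class $\overline{v}$, declare $\alpha(\overline{v}) \subseteq V(G)$ to be the union of (i) the set $R(\overline{v})$ of vertices of $G$ that arise as copies of members of $\overline{v}$ via the embeddings of the individual $H_j$, and (ii) the set $S(\overline{v})$ of internal vertices of the stitching paths $P_{i,j}$ whose endpoints $b_{H_j}(i)$ and $a_{H_{j+1}}(i)$ belong to $\overline{v}$. Non-boundary vertices of $\nm{\mc{H}}$ thus yield singleton branch sets.

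I would then verify the three model conditions. Connectedness of each $\alpha(\overline{v})$ is immediate: the stitching paths explicitly connect the pairs of representatives in $R(\overline{v})$ that correspond to a generating identification of $\sim$, and the equivalence relation is the transitive closure of those identifications. For edges, every edge of $\nm{\mc{H}}$ descends from an edge of some $\nm{H_j}$, which is present in $G$ by (E1), and its endpoints lie in the appropriate branch sets. Pairwise disjointness is the one step requiring attention: between distinct classes $\overline{v} \neq \overline{w}$, the sets $R(\overline{v})$ and $R(\overline{w})$ are disjoint because of (E2)--(E3), the sets $S(\cdot)$ are disjoint from every $R(\cdot)$ because stitching paths are otherwise disjoint from $V(\mc{H})$, and two stitching paths can only share internal vertices when they are $P_{i,j}$ and $P_{i,j+1}$ meeting in a common subpath with endpoint $a_{H_{j+1}}(i) = b_{H_{j+1}}(i)$, in which case that shared endpoint is a single vertex of $G$ and both paths contribute to $S(\overline{v})$ for the same class containing this vertex.

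The main, and really the only, obstacle is this final bookkeeping in the disjointness check: one must observe that the ``shared subpath'' exception in the stitching definition occurs exactly at a point where the two paths' endpoints are already identified in $\nm{\mc{H}}$, so the apparent overlap is consistent with the model rather than destructive. Once this is resolved, the three model conditions hold and the lemma follows.
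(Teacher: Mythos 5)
Your proof is correct and takes essentially the same route as the paper: the paper's one-line argument is to contract every stitching path and then delete the vertices outside $V(\mc{H})$, which is precisely the minor model you write out explicitly (each stitching path becomes a branch set realizing the corresponding identification $b_{H_j}(i)\sim a_{H_{j+1}}(i)$, and non-boundary vertices give singleton branch sets). Your version is more careful than the paper's about the disjointness bookkeeping, but the underlying idea and the use of the stitching axioms are identical.
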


\begin{proof} Let $\mc{P}$ be a collection of paths such that $\mc{H}$ is stitched to $G$ by $\mc{P}$
	The graph $\nm{\mc{H}}$ can be obtained from $G$ by contracting the edges of all paths in $\mc{P}$ and then deleting all the vertices in $V(G) - V(\mc{H})$.
\end{proof}

We say that a patchwork $\mc{H}$ embedded in a graph $G$ is \emph{$S$-respectful} for  $S \subseteq V(G)$  if  every $v \in S \cap V(\mc{H})$ is $\mc{H}$-global.  With all the definitions in place, we finish this section by stating our main structural result, which is proved in Sections~\ref{s:struct} and~\ref{s:large}.

\begin{thm}\label{t:struct2}
	For every proper minor-closed class $\mc{F}$ there exists $\theta=\theta_{\ref{t:struct2}}(\mc{F})$ such that for all integers $l,s \geq 0$ there exists $N=N_{\ref{t:struct2}}(\mc{F},l,s)$ satisfying the following. For every graph $G \in \mc{F}$  with $|V(G)| \geq N$ and every $S \subseteq V(G)$ with $|S| \leq s$,  
	there exists an integer $0 \leq q \leq \theta$ and a refined $S$-respectful $q$-patchwork $\mc{H}$  stitched to $G$ auch that $|\mc{H}| \geq l$ and $|V(H)| \leq N$ for every $H \in \mc{H}$.
\end{thm}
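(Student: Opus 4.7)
The plan is to split on the tree-width of $G$. Choose a threshold $\tau=\tau(\mc{F})$, depending only on $\mc{F}$, large enough that every graph in $\mc{F}$ with $\tw\geq\tau$ admits a sufficiently rich wall structure for \cref{t:large} to apply. In the large tree-width case, $\tw(G)\geq\tau$, we invoke \cref{t:large} directly (proved in \cref{s:large} via the asymmetric grid theorem of Geelen--Joeris and the asymmetric flat-wall theorem extracted from Chuzhoy's argument), which provides the required patchwork with boundary size bounded by a constant depending only on $\mc{F}$.

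In the bounded tree-width case, $\tw(G)<\tau$, fix a tree decomposition $(T,\{B_t\})$ of width at most $\tau-1$. Because $|V(G)|\geq N$ is very large and every bag has at most $\tau$ vertices, $T$ has many nodes; a standard argument --- any tree without a long path contains a large complete binary tree, and a sufficiently large complete binary tree would force $\tw(G)\geq\tau$ after a modest enlargement of $\tau$, a contradiction --- yields a long path $t_1 t_2 \ldots t_m$ in $T$. For each $v\in S$ the set $\{t:v\in B_t\}$ is a subtree of $T$ and so meets the path in at most one contiguous interval; the at most $s$ such intervals cover at most $s$ sub-intervals of the path, so provided $m\gg ls$, pigeonhole produces a \emph{clean} sub-path $P'$ of length at least $2l$ whose bags are disjoint from $S$. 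Along $P'$, every separator $S_i:=B_{t_i}\cap B_{t_{i+1}}$ has size at most $\tau$. Two rounds of pigeonhole --- first on $|S_i|$, then on the isomorphism type of the intermediate slice $G[U_i]$ (where $U_i$ is the union of bags along a segment of $P'$ together with the subtrees of $T$ hanging off its interior nodes) via \cref{t:wqo} --- restrict us to a sub-path on which all separators have a common size $q\leq\tau$ and all slices are isomorphic to a fixed patch. A Menger-type argument inside each slice, applied between its two boundaries (shrinking $q$ uniformly and re-pigeonholing if no full linkage is available), makes each patch refined. Consecutive patches share their separator boundaries, so the stitching paths are trivial, and the $S$-respectful condition is vacuous because no patch meets $S$. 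Setting $\theta(\mc{F})$ to the maximum of $\tau$ and the bound provided by \cref{t:large} gives the required universal constant.

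\textbf{Main obstacle.} The principal difficulty is \cref{t:large} in the large tree-width case: producing a long, refined, $S$-respectful patchwork inside a flat wall --- where the natural "vertical slices" must be shown to be linked and isomorphic copies of a single type, the apex of the flat wall must be arranged to absorb $S$, and the boundary size $q$ must remain bounded by an intrinsic constant of $\mc{F}$ independent of $l$ and $s$. By contrast, the bounded tree-width case outlined above is largely pigeonhole on tree decompositions combined with Menger's theorem and \cref{t:wqo} to normalize patch types.
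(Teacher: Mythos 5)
Your high-level split (small vs.\ large width, using \cref{t:large} for the latter) is the right shape, but the proposal contains several genuine gaps, the most serious of which is the choice of width measure.

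\textbf{Using $\tw$ rather than $\tw_\theta$ is fatal.} You split on ordinary treewidth with a threshold $\tau$ that you initially say depends only on $\mc{F}$. But to apply \cref{t:large} you need an $(n,s)$-grid minor with $n=n_{\ref{t:large}}(t,l)$, and $n$ grows with $l$; any excluded-grid-style argument therefore requires $\tau=\tau(\mc{F},l)$. In the complementary bounded-width case you then have $\tw(G)<\tau(\mc{F},l)$, and the adhesion sets of a tree decomposition of this width can have size up to $\tau(\mc{F},l)$. The $q$ of the resulting patches would then depend on $l$, contradicting the statement's requirement that $q\le\theta$ for a $\theta$ depending \emph{only} on $\mc{F}$. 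The paper avoids this precisely by working with $\tw_\theta$: the Geelen--Joeris theorem (\cref{t:wheel}) lets one fix $\theta=\theta(\mc{F})$ universally and then let $w=w(\theta,n)$ blow up; if $\tw_\theta(G)<w$ one gets a tree decomposition whose width may depend on $l$ but whose \emph{adhesion} is below the universal $\theta$, so the separators along the path stay small. Without this device the argument cannot close.

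\textbf{The ``long path via binary tree'' argument is incorrect.} You claim that if the decomposition tree has no long path, then it contains a large complete binary tree, which ``would force $\tw(G)\ge\tau$.'' That implication is false: the shape of the decomposition tree carries no information about treewidth (a graph of treewidth~1 can have a tree decomposition whose underlying tree is a complete binary tree of any size). The paper's \cref{l:tree2path} handles both cases (long path in $T$, or a high-degree node) by hand: in the high-degree case it pigeonholes on the intersections of subtrees with the bag of the hub vertex and directly assembles a path decomposition.

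\textbf{The $S$-respectful condition and the size bound $|V(H)|\le N$ are not obtained.} Your plan to find a ``clean'' subpath whose bags avoid $S$ entirely cannot succeed if, say, a vertex of $S$ lies in every bag. The paper's notion of $S$-respectful only asks that vertices of $S$ that appear in the patchwork be $\mc{H}$-global, and \cref{o:respect1} shows that discarding at most $|S|$ patches already achieves this. More importantly, you do not establish the requirement $|V(H)|\le N$ for every $H\in\mc{H}$. Pigeonholing ``on isomorphism type via \cref{t:wqo}'' does not give a bound: wqo yields a single comparable pair, not finitely many classes, and even fixing a common isomorphism type would give a size bound depending on $G$, not on $(\mc{F},l,s)$. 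The paper's proof of \cref{t:struct2} from \cref{l:struct} is exactly the mechanism for this: take a minimal counterexample $(G,S)$; any patch $H'$ that is too large induces a separation of $G$ with $A=V(H')$, $|A\cap B|\le 2\theta\le s$; recurse into $G[A]$ with $S'=A\cap B$ (this is why the theorem is stated for general $S$ with $|S|\le s$), and \cref{o:respect2} transfers the result back. This bootstrap, and the choice of $s\ge 2\theta$ that makes it well-founded, are absent from the proposal.

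In short: the large-width case is outlined acceptably (modulo the $\tw$ vs.\ $\tw_\theta$ issue), but the bounded-width case and the reduction to bounded patch size both need to be replaced by arguments along the lines of \cref{l:tree2path}, \cref{l:goodpath}, \cref{l:smallw}, and the minimal-counterexample recursion.
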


\section{Proof of Theorems~\ref{t:pathwidth} and ~\ref{t:periodic}}\label{s:proofs}

In  this section we derive Theorem~\ref{t:pathwidth}\footnote{and therefore Theorems~\ref{t:rational} and~\ref{t:pathwidth0}} and  Theorem~\ref{t:periodic}, from Theorem~\ref{t:struct2}. For this purpose fix a proper minor-closed class $\mc{F}$ with $d(\mc{F}) >0$, let $\Delta=d(\mc{F})$, and let $\theta=\theta_{\ref{t:struct2}}(\mc{F})$.

For a $q$-patch $H$ let $$\phi(H)=e(H)- \Delta(|V(H)|-q).$$
In particular, $\phi(G) = |E(G)|-\Delta |V(G)|$ for a graph $G$.
We say that a graph $G$ is  \emph{$p$-pruned} if $\phi(G) \geq \phi(H)$ for every minor $H$ of $G$ with  $|V(H)| \geq |V(G)| - p$.

\begin{lem} \label{l:pruned1}
For all $p>0$  there exists $\eps=\eps_{\ref{l:pruned1}}(p)>0$ such that for all $N_0>0$ there exists $N=N_{\ref{l:pruned1}}(p,N_0)$ satisfying the following. For every graph $G$ with $|V(G)| \geq N$ and $d(G) \geq \Delta -\eps$ there exists a graph $G'$ with $|V(G')| \geq N_0$ such that $G' \leq G$  and either \begin{description}
	\item[(P1)] $G'$ is $p$-pruned, and $\phi(G') \geq \phi(G)$, or
	\item[(P2)]  $d(G') \geq \Delta +\eps$.
	\end{description}
\end{lem}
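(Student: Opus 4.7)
The plan is to iteratively replace $G$ by minors that strictly increase $\phi$, each step discarding at most $p$ vertices, and to halt at the first point where either the current graph is $p$-pruned (aiming for (P1)) or its density reaches $\Delta+\eps$ (aiming for (P2)). Concretely, set $G_0:=G$ and, given $G_i$, halt if $G_i$ is $p$-pruned or $d(G_i)\geq \Delta+\eps$; otherwise the failure of $p$-prunedness yields a minor $G_{i+1}$ with $|V(G_{i+1})|\geq |V(G_i)|-p$ and $\phi(G_{i+1})>\phi(G_i)$. Since $G$ has only finitely many distinct minors and $\phi$ strictly increases along the sequence, the process halts at some $G_I$. The inequality $\phi(G_I)\geq \phi(G)$ required in (P1) is immediate from strict monotonicity, so everything reduces to guaranteeing $|V(G_I)|\geq N_0$ via a careful choice of $\eps$ and $N$.

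The quantitative engine of the argument is a positive lower bound on each $\phi$-increment. Writing
\[
\phi(G_{i+1})-\phi(G_i) \;=\; a + b\Delta, \quad a = |E(G_{i+1})| - |E(G_i)| \in \bb{Z}, \quad b = |V(G_i)|-|V(G_{i+1})| \in \{0,1,\ldots,p\},
\]
each increment is a strictly positive element of the finite union of cosets $\bigcup_{b=0}^{p}(\bb{Z}+b\Delta)$. The positive values of this set admit a positive minimum $\eta = \eta(p,\Delta)>0$, namely the smaller of $1$ (the minimum positive integer) and $\min\{\{b\Delta\} : b\in [p],\ b\Delta\notin\bb{Z}\}$ (the smallest fractional part arising).

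With $\eta$ in hand, the vertex-count bound is routine. For $j<I$ the halting rules give $d(G_j)<\Delta+\eps$, hence $\phi(G_j)<\eps|V(G_j)|\leq \eps|V(G)|$; together with $\phi(G_0)\geq -\eps|V(G)|$ coming from the hypothesis $d(G)\geq \Delta-\eps$, the values $\phi(G_0),\ldots,\phi(G_{I-1})$ lie in an interval of length $2\eps|V(G)|$ and differ consecutively by at least $\eta$, so $I\leq 2\eps|V(G)|/\eta+1$. Therefore $|V(G_I)|\geq |V(G)|(1-2\eps p/\eta)-p$; taking $\eps := \eta/(4p)$ and $N := 2(N_0+p)$ forces $|V(G_I)|\geq N_0$ in both cases.

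The main obstacle is constructing $\eta$: without the $p$-bounded vertex-deletion constraint, possible $\phi$-increments would sweep through $\bb{Z}+\Delta\bb{Z}$, which is dense in $\bb{R}$ for irrational $\Delta$, leaving no uniform lower bound. The role of the $p$-boundedness is precisely to cut the increment set down to a finite union of integer-shifted cosets, whose positive part is discrete near zero; this is what makes the whole iteration scheme quantitative.
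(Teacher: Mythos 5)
Your proof is correct and takes essentially the same approach as the paper's: both arguments rest on the observation that a $\phi$-increase realized by a minor deleting at most $p$ vertices is bounded below by a positive constant depending only on $p$ and $\Delta$ (your $\eta$ coincides with the paper's $\delta=\min_{k\in[p]}\{1+\Delta k-\lceil\Delta k\rceil\}$), and both translate this into a bound on the total vertex loss before the process must stop. The only difference is presentational: the paper packages the iteration into a single extremal choice of a minor $G'$ minimizing $|V(G')|$ subject to $|V(G')|\geq N_0$ and $\phi(G')\geq\phi(G)+2\eps(|V(G)|-|V(G')|)$, whereas you iterate explicitly and count steps.
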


\begin{proof}
	Let $$\delta= \min_{k \in [p]} \{1 + \Delta k - \lceil \Delta k \rceil\}.$$
	Then $\delta > 0$.
	Note that if a graph $G$ is not $p$-pruned then there exists a minor $H$ of $G$ such that $|V(H)| \geq |V(G)| - p$, and
	$$0 < \phi(H) -\phi(G) = \Delta(|V(G)|-|V(H)|) - |E(G)|-|E(H)|.$$ 
	Thus $|E(G)|-|E(H)| <  \lceil \Delta(|V(G)|-|V(H)|) \rceil$, and so $\phi(H) \geq \phi(G) + \delta$.
	
	We show that $\eps = \frac{\delta}{2p}$ and $N =  3(N_0+p)$ satisfy the lemma.
	Let a minor $G'$ of $G$ chosen so that $|V(G')| \geq N_0$,
	\begin{equation}\label{e:pruned1}
	\phi(G') \geq \phi(G) + 2\eps(|V(G)|-|V(G')|),
	\end{equation} and subject to these condition $|V(G')|$ is minimum. 
	Note that such a choice is possible as $G'=G$ satisfies \eqref{e:pruned1}.
	 
	Suppose first that $|V(G')| < N_0 + p$ then
	\begin{align*}
	|E(G')| &\stackrel{\eqref{e:pruned1}}{\geq} \Delta |V(G')|+ (|E(G)| - \Delta|V(G)|) +  2\eps(|V(G)|-|V(G')|) \\ &\geq (\Delta +\eps) |V(G')| + \eps |V(G)| -3\eps|V(G')|
	 \\ &\geq (\Delta +\eps) |V(G')|,
	\end{align*}
	where the second inequality holds as $|E(G)|\geq (\delta-\eps)|V(G)|$, and the last inequality holds by the choice of $N$. It follows that (P2) holds in this case.
	
	Thus we assume that $|V(G')| \geq  N_0 + p$. If $G'$ is $p$-pruned then (P1) holds. Otherwise, there exists a minor $H$ of $G'$ such that $|V(H)| \geq |V(G)| - p \geq N_0$, and $$\phi(H) \geq \phi(G') + \delta \geq  \phi(G) + 2\eps(|V(G)|-|V(G')|+p) \geq \phi(G) + 2\eps(|V(G)|-|V(H)|),$$ 
	where the second inequality holds by the choice of $\eps$.
	Thus $H$ contradicts the choice of $G'$.
\end{proof}

\begin{cor} \label{l:pruned} 
For every $N>0$ there exists $G \in \mc{F}$ such that $|V(G)| \geq N$, 
 and $G$ is $N$-pruned. 
\end{cor}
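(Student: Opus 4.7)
The plan is to invoke \cref{l:pruned1} on a carefully chosen graph of density close to $\Delta$, and then rule out the bad alternative (P2) by exploiting the fact that $\Delta = \limsup ex_{\mc{F}}(n)/n$ bounds the density of large members of $\mc{F}$.

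Given $N$, I would set $p := N$ and $\eps := \eps_{\ref{l:pruned1}}(N) > 0$. Since $\Delta = \limsup_{n \to \infty} ex_{\mc{F}}(n)/n$, there is an integer $n_\eps$ such that $ex_{\mc{F}}(n)/n < \Delta + \eps$ for every $n \geq n_\eps$; equivalently, every $G' \in \mc{F}$ with $|V(G')| \geq n_\eps$ satisfies $d(G') < \Delta + \eps$. Take $N_0 := \max(N, n_\eps)$ and let $M := N_{\ref{l:pruned1}}(p, N_0)$. The same limsup identity provides infinitely many $n$ with $ex_{\mc{F}}(n)/n \geq \Delta - \eps$, so one can pick $G \in \mc{F}$ with $|V(G)| \geq M$ and $d(G) \geq \Delta - \eps$.

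Next I would apply \cref{l:pruned1} to this $G$ to obtain a minor $G' \leq G$ with $|V(G')| \geq N_0$ satisfying (P1) or (P2). Because $\mc{F}$ is minor-closed, $G' \in \mc{F}$; and because $|V(G')| \geq N_0 \geq n_\eps$, the density bound from the previous paragraph gives $d(G') < \Delta + \eps$, excluding (P2). Hence (P1) must hold, so $G'$ is $p$-pruned, i.e., $N$-pruned, and has order at least $N_0 \geq N$, which is exactly what the corollary demands.

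No substantive obstacle is anticipated: the heavy lifting already sits inside \cref{l:pruned1}, and the only new observation is that writing $d(\mc{F})$ as a \emph{limit superior} simultaneously supplies arbitrarily large graphs of density nearly $\Delta$ and caps the density of sufficiently large graphs in $\mc{F}$, so exactly one of the two alternatives produced by \cref{l:pruned1} can occur.
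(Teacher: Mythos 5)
Your proof is correct and essentially identical to the paper's: both set $\eps=\eps_{\ref{l:pruned1}}(N)$, choose $N_0$ large enough that all graphs in $\mc{F}$ of that order or larger have density below $\Delta+\eps$, take a large graph of density at least $\Delta-\eps$ (both uses of the limsup characterization), invoke \cref{l:pruned1}, and rule out alternative (P2) by the choice of $N_0$.
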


\begin{proof}
Let $\eps=\eps_{\ref{l:pruned1}}(N)$. Let $N_0 \geq N$ be chosen so that $d(G') < \Delta +\eps$ for every $G' \in \mc{F}$ with $|V(G')| \geq N_0$, and let $N_1=N_{\ref{l:pruned1}}(N,N_0)$. By the choice of $\Delta$, there exists $G \in \mc{F}$  with $|V(G)| \geq N_1$ and 	$d(G) \geq \Delta - \eps$. By the choice of $N_1$ there exists a minor $G' \in \mc{F}$ of $G$ such that $|V(G')| \geq N_0 \geq N$, and either 
$G'$ is $N$-pruned, or $d(G') \geq \Delta +\eps$. In the first case, $G'$ satisfies the corollary, while the second case is impossible by the choice of $N_0$.
\end{proof}

 We say that a $q$-patch $H$ is \emph{heavy} if $\phi(H) > 0$, $H$ is \emph{balanced} if $\phi(H) = 0$, and $H$ is  \emph{light} if $\phi(H) < 0$.

\begin{cor}\label{c:notheavy}
	Let $H$ be a refined $\mc{F}$-tame $q$-patch. Then $H$ is not heavy.
\end{cor}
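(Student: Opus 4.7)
The plan is a short proof by contradiction combining \cref{c:limitdensity} with the very definition of $d(\mc{F})=\Delta$. The intuition is that a refined $\mc{F}$-tame patch cannot have strictly higher ``internal density'' than $\Delta$, because its iterated powers would then witness graphs in $\mc{F}$ whose density exceeds $\Delta$ by a fixed positive constant on arbitrarily many vertices, contradicting the definition of $\Delta$.

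Concretely, I would argue as follows. Suppose for contradiction that $H$ is heavy, i.e.\ $\phi(H)=e(H)-\Delta(|V(H)|-q)>0$. Since $H$ is refined, it is in particular non-degenerate, so $|V(H)|-q\geq 1$, and the heaviness inequality rearranges to
\[ \frac{e(H)}{|V(H)|-q} \;>\; \Delta. \]
Applying \cref{c:limitdensity} to the non-degenerate patch $H$ then yields
\[ \liminf_{n\to\infty} d(\nm{H^n}) \;\geq\; \frac{e(H)}{|V(H)|-q} \;>\; \Delta. \]
On the other hand, by $\mc{F}$-tameness we have $\nm{H^n}\in\mc{F}$ for every $n\geq 1$, and iterating \eqref{e:vproduct} gives $|V(\nm{H^n})|=n|V(H)|-(n-1)q\to\infty$, since $|V(H)|>q$. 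The definition of $d(\mc{F})=\Delta$ therefore forces $d(\nm{H^n})\leq\Delta+o_{|V(\nm{H^n})|}(1)$, so $\limsup_{n\to\infty} d(\nm{H^n})\leq\Delta$, contradicting the previous display.

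I do not foresee any genuine obstacle here: \cref{c:limitdensity} is tailored exactly to make this inference immediate, and the only role of the refined hypothesis is to guarantee non-degeneracy (so that $|V(H)|-q\geq 1$ and the sequence $|V(\nm{H^n})|$ actually grows to infinity). In fact, this corollary is essentially a restatement of \cref{c:limitdensity} against the ambient density $\Delta$, which is why it is stated immediately after the notion of heaviness is introduced.
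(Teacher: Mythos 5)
Your proof is essentially identical to the paper's: both apply \cref{c:limitdensity} to the non-degenerate patch $H$ and compare the resulting lower bound on the asymptotic density of $\nm{H^n}$ against $\Delta=d(\mc{F})$, using $\mc{F}$-tameness and $|V(\nm{H^n})|\to\infty$ to cap that density at $\Delta$. The only cosmetic difference is that you frame it as a contradiction while the paper argues directly that $\phi(H)\leq 0$.
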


\begin{proof}
	Using \cref{c:limitdensity} we have
	 $$\Delta \geq \lim_{n \to \infty}d(\nm{H^n}) \geq \frac{e(H)}{|V(H)|-q} = \Delta + \frac{\phi(H)}{|V(H)|-q}.$$
	Thus $\phi(H) \leq 0$ as desired.
\end{proof}

\begin{lem} \label{l:heavy} 
	There exists $K=K_{\ref{l:heavy}}$ such that for every  $q \leq \theta$, every refined $q$-patchwork $\mc{H}$ such that $\llbracket \mc{H} \rrbracket \in \mc{F}$ contains at most $K$ heavy patches.
\end{lem}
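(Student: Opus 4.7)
I proceed by contradiction. Suppose no such $K$ exists, so for every integer $M$ there is $q_M \leq \theta$ and a refined $q_M$-patchwork $\mc{H}^{(M)}$ with $\nm{\mc{H}^{(M)}} \in \mc{F}$ containing more than $M$ heavy patches. Pigeonholing on $q_M \in \{0,1,\ldots,\theta\}$, I may assume all $q_M$ equal a common value $q$, and set $K^* := K_{\ref{l:multiply}}(\mc{F}, q)$. For each $M$, let $\mc{T}^{(M)}$ be the subsequence of $\mc{H}^{(M)}$ consisting of its heavy patches, so $|\mc{T}^{(M)}| > M$; by \cref{o:patchwork1} we have $\nm{\mc{T}^{(M)}} \leq \nm{\mc{H}^{(M)}} \in \mc{F}$, hence $\nm{\mc{T}^{(M)}} \in \mc{F}$.

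The crux is the following claim: no refined $q$-patchwork $\mc{T}=(T_1,\ldots,T_n)$ with $\nm{\mc{T}} \in \mc{F}$ admits indices $i_1 < \cdots < i_{K^*}$ with heavy patches satisfying $T_{i_1} \leq T_{i_2} \leq \cdots \leq T_{i_{K^*}}$ in the minor order on $q$-patches. Indeed, because $\times$ is monotone in the minor order, iterating yields $T_{i_1}^{K^*} \leq T_{i_1} \times T_{i_2} \times \cdots \times T_{i_{K^*}}$, and by \cref{o:patchwork1} we obtain
\[
\nm{T_{i_1}^{K^*}} \leq \nm{T_{i_1} \times T_{i_2} \times \cdots \times T_{i_{K^*}}} = \nm{\mc{T}|_{\{i_1,\ldots,i_{K^*}\}}} \leq \nm{\mc{T}} \in \mc{F}.
\]
Since $T_{i_1}$ is linked, \cref{l:multiply} forces $T_{i_1}$ to be $\mc{F}$-tame, and then \cref{c:notheavy} contradicts heaviness of $T_{i_1}$.

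Applied to each $\mc{T}^{(M)}$, the claim says that this heavy-only sequence of length $> M$ contains no non-decreasing chain of length $K^*$. Since the set of isomorphism classes of $q$-patches is countable, a diagonal pigeonhole---at stage $j$, pass to an infinite sub-collection of indices $M$ whose $\mc{T}^{(M)}$ share a common $j$-th entry $P_j$, using that $|\mc{T}^{(M)}| > M \to \infty$---produces an infinite sequence $(P_j)_{j \in \bb{N}}$ of $q$-patches such that every finite prefix of $(P_j)$ occurs as a prefix of some $\mc{T}^{(M)}$. Consequently $(P_j)$ itself has no non-decreasing subsequence of length $K^*$. This contradicts \cref{t:wqo}, which by iterating the standard ``good index'' argument implies that every infinite sequence of $q$-patches contains an infinite non-decreasing subsequence.

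\textbf{Main obstacle.} The structural heart of the proof is the middle claim: the interplay of $\mc{F}$-tameness (\cref{l:multiply}) with the impossibility of $\mc{F}$-tame heavy refined patches (\cref{c:notheavy}) to forbid long non-decreasing chains of heavy patches. The more delicate bookkeeping step is upgrading the qualitative well-quasi-ordering of \cref{t:wqo} into a \emph{uniform} finite bound on the number of heavy patches---this is precisely what the countable diagonal pigeonhole in the third paragraph accomplishes, and it crucially uses countability of isomorphism classes of $q$-patches, which in turn depends on $q \leq \theta$ being bounded.
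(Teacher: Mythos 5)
Your first two paragraphs are sound and track the paper's key calculation: for a chain $T_{i_1}\leq\cdots\leq T_{i_{K^*}}$ of heavy patches inside a refined patchwork $\mc{T}$ with $\nm{\mc{T}}\in\mc{F}$, one gets $\nm{T_{i_1}^{K^*}}\leq\nm{\mc{T}}\in\mc{F}$ via \cref{o:patchwork1} and monotonicity of $\times$, whence $T_{i_1}$ is $\mc{F}$-tame by \cref{l:multiply}, contradicting \cref{c:notheavy}. This is essentially the same computation the paper performs, just organized around $T_{i_1}$ rather than a minimal element.

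The gap is in the third paragraph. You try to convert ``no non-decreasing chain of length $K^*$'' into a uniform bound on the \emph{number} of heavy patches via a diagonal pigeonhole, ``using that the set of isomorphism classes of $q$-patches is countable.'' But countable is not finite: given infinitely many indices $M$, there is no guarantee that infinitely many of them share the same $j$-th entry -- the $j$-th entries could all be pairwise distinct (a countably infinite partition of an infinite set can have every cell finite). So you cannot stabilize $P_j$ at any stage, and the sequence $(P_j)_{j\in\bb{N}}$ need not exist. Nor does ``no length-$K^*$ chain'' by itself bound the length of a sequence in a well-quasi-order: in $(\bb{N},\leq)$, strictly decreasing sequences have no non-decreasing chain of length $2$, yet can be arbitrarily long. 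What is actually needed -- and what the paper invokes -- is the \emph{finite basis} property of WQOs, derived from \cref{t:wqo}: the set of $\leq$-minimal refined heavy $q$-patches is a finite set $\mc{M}$, and every refined heavy $q$-patch lies above some element of $\mc{M}$. With $K=|\mc{M}|\cdot K_{\ref{l:multiply}}(\mc{F},q)$, a patchwork with $\geq K$ heavy patches has, by pigeonhole over the \emph{finite} set $\mc{M}$, some $M\in\mc{M}$ lying below at least $K_{\ref{l:multiply}}(\mc{F},q)$ of them, and then the contradiction you already established in paragraph two applies to $M$. Replacing your diagonal argument with this finite-basis pigeonhole closes the gap.
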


\begin{proof}
	Clearly, it suffices to show that there exists $K$ satisfying the lemma requirements for any fixed $q \leq \theta$.
	By \cref{t:wqo} there exists a finite set $\mc{M}$ of refined  heavy $q$-patches such that for every refined heavy $q$-patch $H$ there exists $M \in \mc{M}$  such that $M \leq H$.  Let $M = |\mc{M}|$, and let $k=K_{\ref{l:multiply}}(\mc{F},q)$.
	
	We show that $K=kM$ is as desired. 
	Suppose for a contradiction that a refined patchwork $\mc{H}$ contains at least $K$ heavy $q$-patches. Then by the pigeonhole principle there exists $M \in \mc{M}$ such that $M$ is a minor of at least $k$ elements of $\mc{H}$. Thus $\nm{M^k} \leq \nm{\mc{H}}$ by \cref{o:patchwork1}, and so $\nm{M^k} \in \mc{F}$.  By the choice of $k$ it follows that $M$ is $\mc{F}$-tame, contradicting \cref{c:notheavy}. 
\end{proof}

\begin{lem} \label{l:faddition}
Let $\mc{H}=({H}_1,\ldots,{H}_{l})$ be a  $q$-patchwork  embedded in a graph $G$, then for every $I \subseteq [l]$
 \begin{equation}
 \label{e:faddition} \phi(G \slash (\mc{H},I)) = \phi(G)-\sum_{i \in I}\phi(H_i).\end{equation}
\end{lem}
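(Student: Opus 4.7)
The plan is to reduce to the single-patch case $|I|=1$ by induction on $|I|$, and handle that case by directly computing how $\phi$ changes under one contraction. The base case $|I|=0$ is immediate. For the inductive step, pick any $i \in I$; by \cref{l:contract2prelim}, $\mc{H}|_{[l] \setminus \{i\}}$ is a $q$-patchwork embedded in $G \slash H_i$, so the inductive hypothesis applied to it with the index set $I \setminus \{i\}$ yields
$$\phi(G \slash (\mc{H}, I)) = \phi(G \slash H_i) - \sum_{j \in I \setminus \{i\}} \phi(H_j),$$
and combining this with the single-patch identity $\phi(G \slash H_i) = \phi(G) - \phi(H_i)$ completes the induction.

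It remains to prove the single-patch case: for a linked $q$-patch $H$ embedded in $G$, $\phi(G \slash H) = \phi(G) - \phi(H)$. \cref{l:contract1} already gives $|E(G)| - |E(G \slash H)| = e(H)$, so in view of the definition of $\phi$ it suffices to show $|V(G)| - |V(G \slash H)| = |V(H)| - q$. By construction $G \slash H$ keeps $V(G) \setminus V(H)$ unchanged and replaces $\dH$ by its quotient under the equivalence relation $\sim$ generated by $a_H(i) \sim b_H(i)$ for $i \in [q]$. Thus what must be shown is that this quotient has exactly $q$ classes.

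For this I would use the linked hypothesis. Choose a linkage $(P_i)_{i \in [q]}$ of $H$. Pairwise vertex-disjointness of the $P_i$ forces $a_H(i) \neq b_H(j)$ whenever $i \neq j$, while injectivity of $a_H$ and $b_H$ forces $a_H(i) \neq a_H(j)$ and $b_H(i) \neq b_H(j)$ for $i \neq j$. Hence the generating pairs $\{a_H(i), b_H(i)\}$ are pairwise disjoint, and the $\sim$-classes on $\dH$ are exactly these $q$ pairs (each possibly collapsing to a singleton when $a_H(i) = b_H(i)$). So $|\dH / {\sim}| = q$, giving $|V(G \slash H)| = (|V(G)| - |V(H)|) + q$ and hence the desired identity. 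I do not anticipate a genuine obstacle here; the only content beyond bookkeeping is recognizing that the linkage pins down the boundary quotient to have exactly $q$ classes, which is what makes the $\Delta q$ term in $\phi(H)$ match the vertex accounting.
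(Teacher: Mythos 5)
Your proof is correct and follows essentially the same route as the paper's: iterate the single-patch contraction via \cref{l:contract2prelim} and combine the edge count from \cref{l:contract1} with the vertex count $|V(G)|-|V(G\slash H)|=|V(H)|-q$. The only difference is cosmetic — the paper invokes \cref{l:contract2} (itself the iterated form of \cref{l:contract1}) and declares the vertex count "clear," while you organize the iteration as an explicit induction on $|I|$ and verify, using the linkage to show the $q$ generating pairs $\{a_H(i),b_H(i)\}$ are pairwise disjoint, that the boundary quotient has exactly $q$ classes; this is a sound elaboration of what the paper leaves implicit (and makes visible that the linked hypothesis, present in \cref{l:contract2} but omitted from the statement of \cref{l:faddition}, is actually being used).
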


\begin{proof}
By \cref{l:contract2} we have \begin{equation}
\label{e:faddition1} |E(G)|-|E(G \slash (\mc{H},I)|= \sum_{i \in I}e(H_i)
\end{equation}	and clearly \begin{equation}
\label{e:faddition2}
|V(G)|-|V(G \slash (\mc{H},I)| = \sum_{i \in I}(|V(H_i)|-q).\end{equation} Multiplying \eqref{e:faddition2} by $\Delta$ and subtracting the result from \eqref{e:faddition1}, we obtain \eqref{e:faddition}.
\end{proof}

We are now ready to derive the first of our main results from \cref{t:struct2}.

\begin{proof}[Proof of Theorem~\ref{t:pathwidth}]
	By \cref{t:wqo} there exists a finite set $\mc{M}$ of refined balanced $q$-patches such that for every refined balanced $q$-patch $H$ there exists $M \in \mc{M}$  such that $M \leq H$. 
	Let $K_0 = \max_{q \leq \theta}K_{\ref{l:multiply}}(\mc{F},q)$, let $l=|\mc{M}|K_0 + K_{\ref{l:heavy}}$, and let  
	$N=N_{\ref{t:struct2}}(\mc{F},l,0)$. 
	
	By Lemma~\ref{l:pruned} there exists an $N$-pruned $G \in \mc{F}$ with $|V(G)| \geq N$. By the choice of $N$,   there exists $q \leq \theta$ and a refined $q$-patchwork  $\mc{H}=(H_i)_{i \in [l]}$ stitched to $G$  such that $|V(H_i)| \leq N$ for every $i \in [l]$. 
	
	Suppose that $H_i$ is light for some $i \in [l]$. Then $\phi(G \slash (\mc{H},\{i\})) > \phi(G)$ 
	by Lemma~\ref{l:faddition} and $G \slash (\mc{H},\{i\}) \leq G$ by \cref{l:contract2}. As $|V(G \slash (\mc{H},\{i\}))| \geq |V(G)|-N$, it follows that $G$ is not $N$-pruned, a contradiction. Thus no element of $\mc{H}$ is light, while 
	by Lemma~\ref{l:heavy} at most  $ K_{\ref{l:heavy}}$ elements of $\mc{H}$ are heavy. 
	
	Thus there exists $J \subseteq [l]$ such that $|J| \geq |\mc{M}|K_0$ and $H_i$ is balanced for every $i \in J$. By the pigeonhole principle there exists a refined balanced patch $M \in \mc{M}$ and $I \subseteq J$ with $|I|=K_0$ such that $M \leq H_i$ for every $i \in I$. Thus by \cref{o:patchwork1} and \cref{l:patchwork2} we have $\nm{M^{K_0}} \leq \llbracket \mc{H} \rrbracket \leq G$. By the choice of $K_0$, it follows that $M$ is $\mc{F}$-tame. By \cref{c:limitdensity} we have $\lim_{n \to \infty} d(M^n) \geq \Delta$, and so  $\lim_{n \to \infty} d(M^n) = \Delta$, as desired.
\end{proof}

It remains to prove Theorem~\ref{t:periodic}.
Let $f(n)=ex_{\F}(n)-\Delta n$. Thus Theorem~\ref{t:periodic} states that $f(n)$ is eventually periodic. 
We say that a graph $G \in \mc{F}$ is \emph{extremal} if $|E(G)|=ex_{\F}(|V(G)|)$, or, equivalently, $\phi(G)=f(|V(G)|)$.

\begin{lem}\label{c:bounded} There exists $D=D_{\ref{c:bounded}}$ such that $|f(n)| \leq D$ for all $n$.
\end{lem}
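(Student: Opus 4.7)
The plan is to establish the upper and lower inequalities $-D \leq f(n) \leq D$ separately and set $D$ to be the larger of the two constants.

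For the upper bound, we show that $\phi(G) := |E(G)| - \Delta |V(G)|$ is uniformly bounded above over $G \in \mc{F}$. Set $l := K_{\ref{l:heavy}} + 1$ and $N := N_{\ref{t:struct2}}(\mc{F}, l, 0)$. For any $G \in \mc{F}$ with $|V(G)| \geq N$, apply \cref{t:struct2} with $S = \emptyset$ to obtain a refined $q$-patchwork $\mc{H} = (H_1, \ldots, H_l)$ stitched to $G$, with $q \leq \theta$ and each $|V(H_i)| \leq N$. Since $\nm{\mc{H}} \leq G$ by \cref{l:patchwork2}, minor-closure gives $\nm{\mc{H}} \in \mc{F}$, and by \cref{l:heavy} at most $K_{\ref{l:heavy}}$ of the $H_i$ are heavy; pick $i$ with $\phi(H_i) \leq 0$. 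By \cref{l:faddition}, $G' := G \slash (\mc{H}, \{i\})$ is a minor of $G$ lying in $\mc{F}$, satisfies $\phi(G') \geq \phi(G)$, and has $1 \leq |V(G)| - |V(G')| \leq N$. Iterating descends to some $G^* \in \mc{F}$ with $|V(G^*)| < N$ and $\phi(G^*) \geq \phi(G)$, yielding $\phi(G) \leq |E(G^*)| \leq \binom{N}{2}$. The case $|V(G)| < N$ is immediate.

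For the lower bound, apply \cref{t:pathwidth} to obtain an $\mc{F}$-tame $q$-patch $H$ with $\lim_{n \to \infty} d(\nm{H^n}) = \Delta$; inspection of the proof of \cref{t:pathwidth} shows we may assume $H$ is refined and balanced, i.e.\ $\phi(H) = 0$. Using $|V(\nm{H^k})| = k(|V(H)| - q) + q$ and $|E(\nm{H^k})| \geq k \cdot e(H) + |E(\nm{H}[\dlH])|$ (from \cref{o:edgesum}), a short computation gives $\phi(\nm{H^k}) \geq k \phi(H) + c^* = c^*$ for every $k$, where $c^* := |E(\nm{H}[\dlH])| - \Delta q$. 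So $f(n_k) \geq c^*$ at the vertex counts $n_k := k(|V(H)| - q) + q$. For an arbitrary $n \geq |V(H)|$, choose $k \geq 1$ with $n_k \leq n \leq n_{k+1}$, set $r := n_{k+1} - n \in [0, |V(H)| - q]$, and construct a graph $G^\flat \in \mc{F}$ on $n$ vertices by deleting any $r$ \emph{non-global} vertices from $\nm{H^{k+1}}$; such vertices exist because each copy of $H$ contributes at least $|V(H)| - q$ non-global vertices. Each non-global vertex lies in at most two consecutive copies of $H$ within the power, so its degree in $\nm{H^{k+1}}$ is bounded by $C := 2 \max_{v \in V(H)} \deg_{\nm{H}}(v)$, independent of $k$. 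Hence $|E(G^\flat)| \geq |E(\nm{H^{k+1}})| - rC$, giving $f(n) \geq c^* + r(\Delta - C) \geq c^* - (|V(H)| - q) \cdot C$, a uniform constant.

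The principal obstacle is the lower bound: padding $\nm{H^k}$ with isolated vertices is not permissible, since minor-closed classes need not contain them, so intermediate vertex counts must be reached by trimming vertices from a slightly larger power of $H$. The key structural fact enabling safe trimming is that while global vertices of $H$ (those in $\dlH \cap \drH$, of which there are at most $q$) can accumulate degree linearly in the exponent of the power, non-global vertices retain uniformly bounded degree in $\nm{H^m}$ across all $m$. Any finitely many small values of $n$ for which $\mc{F}$ contains a graph on $n$ vertices can be absorbed into $D$.
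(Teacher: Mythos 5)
Your proof is correct and follows the same two-step strategy as the paper: the upper bound via \cref{t:struct2}, \cref{l:heavy} and \cref{l:faddition} (you iterate the contraction until $|V|<N$ rather than take a minimal counterexample, but the mechanism is identical), and the lower bound via \cref{t:pathwidth}.

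On the lower bound you are in fact more careful than the paper, which after fixing $H$ simply asserts $ex_{\mc{F}}(n)\geq |E(\nm{H^{\lfloor(n-q)/h\rfloor}})|$ without exhibiting a graph of $\mc F$ on \emph{exactly} $n$ vertices; your deletion of $r$ bounded-degree vertices from $\nm{H^{k+1}}$ supplies precisely that missing construction. You also rightly record that $H$ must be taken balanced (and refined), which is available from the proof but not the statement of \cref{t:pathwidth}; the paper's normalisation $E(\nm{H}[\dlH])=\emptyset$ quietly depends on the same fact. One sentence should be sharpened: ``each copy of $H$ contributes at least $|V(H)|-q$ non-global vertices'' is false for a general $q$-patch. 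For instance with $q=2$, $V(H)=\{a,b,c\}$, $a_H=(a,b)$, $b_H=(b,c)$, the identification chain $(j,c)\sim(j+1,b)\sim(j+2,a)$ passes through $b\in\dlH\cap\drH$, so all but two vertices of $\nm{H^{k+1}}$ are global. What rescues the count is linkedness: in a linked patch $a_H(i)=b_H(j)$ forces $i=j$, so every $v\in\drH\setminus\dlH$ satisfies $\sigma(v)\in\dlH\setminus\drH$, identification chains avoiding $\dlH\cap\drH$ have length at most two, and the $(k+1)(|V(H)|-q)$ pairs $(j,v)$ with $v\notin\drH$ then lie in pairwise distinct non-global classes. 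You do have linkedness (the patch is refined), so the conclusion stands, but the justification should make this dependence explicit.
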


\begin{proof}
First we show that $f(n)$ is bounded from below.
Let $q$ and $H$ be as in Theorem~\ref{t:pathwidth}. Without loss of generality we assume that $E(\nm{H}|[\dlH])= \emptyset.$ Let $h=|V(H)|-q$. Thus $|V(H^k)|=kh+q$ and $|E(H^k)|=k|E(H)|$, and so $|E(H)|=\Delta h$.
As $ex_{\F}(n) \geq |E(H^{\lfloor(n-q)/h\rfloor})|$, we have $ex_{\F}(n) \geq \Delta h ((n-q)/h-1)$, implying $f(n) \geq -\Delta(q+h)$, as desired.

It remains to show that $f(n)$ is bounded from above.
Let	$N=N_{\ref{t:struct2}}(\mc{F},K_{\ref{l:heavy}}+1,0)$. Thus for every $G \in \mc{F}$ with $|V(G)| \geq N$ there exists a refined $q$-patchwork $\mc{H}$ stitched to $G$ for some $q \leq \theta$ with $|\mc{H}|> K_{\ref{l:heavy}}$. In particular,  there exists a patch $H \in \mc{H}$ such that $\phi(H) \leq 0$.

We will show that  $f(n) \leq \max_{n' < N} f(n')$ for every positive integer $n$, which will finish the proof of the lemma.
Suppose not. Then there exists $n \geq N$ such that $f(n) > f(n')$ for every $n' < n$. Let $G \in \mc{F}$ with $|V(G)|=n$ be extremal. Let $H$ be a  patch embedded in $G$, as in the previous paragraph. By Lemma~\ref{l:faddition} we have $\phi(G \slash H) \geq \phi(G)$. Thus $f(|V(G \slash H)|) \geq f(n)$, contradicting the choice of $n$.
\end{proof}

\begin{lem}\label{l:balanced1} For all $s,c,l$ there exists $N=N_{\ref{l:balanced1}}(s,c,l)$ such that for every $G \in \mc{F}$ with $|V(G)| \geq N$ and $\phi(G) \geq c$ and every $S \subseteq V(G)$ with $|S| \leq s$ there exists an $S$-respectful $q$-patchwork $\mc{H}$ embedded in $G$  such that $|\mc{H}| \geq l$,   $|H| \leq N$ for every $H \in \mc{H}$ and every patch in $\mc{H}$ is balanced.  
\end{lem}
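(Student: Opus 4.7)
The plan is to apply \cref{t:struct2} to obtain a long refined $S$-respectful patchwork stitched (hence embedded) in $G$, and then restrict to the subsequence of balanced patches. Two ingredients will bound how many patches must be discarded: \cref{l:heavy} bounds the number of heavy patches by $K_{\ref{l:heavy}}$, while a separate argument will bound the number of light patches.

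For the light patches, the key observation is that by \cref{t:pathwidth}, already established in this section, $\Delta = d(\mc{F})$ is rational; write $\Delta = a/b$ in lowest terms. Then for every $q$-patch $H$ the value $\phi(H) = e(H) - \Delta(|V(H)| - q)$ lies in $\frac{1}{b}\bb{Z}$, so any light $H$ satisfies $\phi(H) \leq -\frac{1}{b}$. If $\mc{H}$ is a refined $q$-patchwork stitched to $G$ with set of light indices $L$, then by \cref{l:contract2} we have $G' := G \slash (\mc{H}, L) \leq G$, so $G' \in \mc{F}$, and \cref{l:faddition} gives
\[ \phi(G') = \phi(G) - \sum_{i \in L}\phi(H_i) \geq \phi(G) + \frac{|L|}{b}. \]
Since $\phi(G') \leq D_{\ref{c:bounded}}$ by \cref{c:bounded}, we obtain $|L| \leq b(D_{\ref{c:bounded}} - c)$.

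With these bounds in hand, I would set $l' := l + K_{\ref{l:heavy}} + \lceil b(D_{\ref{c:bounded}} - c) \rceil + 1$ and $N := N_{\ref{t:struct2}}(\mc{F}, l', s)$. For $G$ and $S$ satisfying the hypotheses, \cref{t:struct2} yields a refined $S$-respectful $q$-patchwork $\mc{H}$ with $|\mc{H}| \geq l'$, all patches of size at most $N$, stitched to $G$. Then $\nm{\mc{H}} \leq G$ by \cref{l:patchwork2}, so $\nm{\mc{H}} \in \mc{F}$, \cref{l:heavy} bounds the number of heavy indices by $K_{\ref{l:heavy}}$, and combined with the light-patch bound at least $l$ patches of $\mc{H}$ are balanced. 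Restricting $\mc{H}$ to these balanced indices produces the desired $\mc{H}'$.

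The step requiring the most care, which I expect to be the main obstacle, is verifying that restricting a patchwork embedded in $G$ to a subset $J$ of its indices yields a patchwork still embedded in $G$, with the $S$-respectful property preserved. Conditions (E1), (E2), (E3) are inherited directly, and preservation of $S$-respectfulness follows because $\mc{H}$-globality of $v \in S \cap V(\mc{H})$ implies $v \in \dlH_k \cap \drH_k$ for every $k \in [|\mc{H}|]$, in particular for every $k \in J$. The delicate condition is (E4): for $v \in V(\mc{H}) - V(\mc{H}|_J)$, applying (E3) for $\mc{H}$ to any $j \in J$ forces $v$ to have no neighbors in $V(H_j)$ (since such $v$ is necessarily non-$\mc{H}$-global and hence lies in $V(H_k)$ for a unique $k \notin J$), so $v$ has no neighbors in $V(\mc{H}|_J)$ at all; for $v \in V(G) - V(\mc{H})$, if the witness $i^*$ for (E4) in $\mc{H}$ belongs to $J$ we use $i^*$ itself, and otherwise the neighbors of $v$ in $V(\mc{H}|_J) \cap (\dlH_{i^*} \cup \drH_{i^*})$ are all $\mc{H}$-global by (E2), so they lie in $\dlH_i \cap \drH_i$ for every $i \in J$ and any such $i$ witnesses (E4) for $\mc{H}|_J$.
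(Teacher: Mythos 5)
Your proposal is correct and follows essentially the same route as the paper's proof: apply \cref{t:struct2} to get a long refined $S$-respectful stitched patchwork, bound the heavy patches by $K_{\ref{l:heavy}}$ via \cref{l:heavy}, bound the light patches using rationality of $\Delta$ (giving a discrete gap below zero for $\phi$) together with \cref{l:faddition} and \cref{c:bounded}, and conclude that at least $l$ patches are balanced. The paper phrases the light-patch bound as a contradiction and cites \cref{t:rational} for the existence of $\varepsilon$ without writing $\varepsilon=1/b$, whereas you compute the gap explicitly, but this is the same idea. The only content you add beyond the paper's proof is the explicit verification that restricting an embedded patchwork to a subset of its indices preserves conditions (E1)--(E4) and $S$-respectfulness; the paper leaves this step implicit. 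Your verification of (E4) for the two cases $v\in V(\mc{H})\setminus V(\mc{H}|_J)$ and $v\in V(G)\setminus V(\mc{H})$ is correct.
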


\begin{proof}
By Theorem~\ref{t:rational}, there exists $\eps >0$ such that $\phi(H) \leq - \eps$ for every light patch $H$.  Let $l' = l + K_{\ref{l:heavy}} + (D_{\ref{c:bounded}}-c)/\eps$. We show that $N=N_{\ref{t:struct2}}(l',s)$ satisfies the lemma.

Let $G,S$ be as in the lemma statement. By the choice of $N$ there exists an $S$-respectful $q$-patchwork $\mc{H}$ stitched to $G$ such that $q \leq \theta$, $|\mc{H}| \geq l'$  and $|H| \leq N$ for every $H \in \mc{H}$. Suppose for a contradiction that $\mc{H}$ contains fewer than $l$ balanced patches. As  $\nm{\mc{H}} \leq G$ by \cref{l:patchwork2},  we have $ \nm{\mc{H}} \in \mc{F}$, and so $ \nm{\mc{H}}$ contains at most $K_{\ref{l:heavy}}$ heavy patches. It follows that $\mc{H}$ contains more than $(D_{\ref{c:bounded}}-c)/\eps$ light patches. Let $I$ be the set of indices of light patches in $G$. By Lemma~\ref{l:faddition}  we have $\phi(G \slash(\mc{H},I)) \geq \phi(G)+|I|\eps > D_{\ref{c:bounded}}$, a contradiction. 	
\end{proof}

\begin{lem}\label{l:balanced2} There exists $p=p_{\ref{l:balanced2}}$ such that for every $l$ there exists $N=N_{\ref{l:balanced2}}(l)$ such that for every extremal $G \in \mc{F}$ with $|V(G)| \geq N$ there exists $q \leq \theta$ and a $q$-patchwork $\mc{H}$ embedded in $G$ such that $|\mc{H}|\geq l$, every $H \in \mc{H}$ is balanced and satisfies $|V(H)| \leq p$.
\end{lem}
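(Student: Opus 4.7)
The plan is to strengthen \cref{l:balanced1} by decoupling the size bound $p$ from the target number $l$ using the well-quasi-ordering of refined balanced $q$-patches supplied by \cref{t:wqo}. Define $p$ as follows. For each $q \leq \theta$, the set of refined balanced $q$-patches that are minimal under the minor relation is finite by \cref{t:wqo}; let $p_0$ be the maximum $|V(M)|$ over all such minimal patches across $q \leq \theta$. Moreover, every refined balanced $q$-patch $H$ with $q \leq \theta$ satisfies
\[
\phi(\nm{H}) \;=\; -\Delta q + |E(\nm{H}[\dlH])| \;\geq\; -\Delta \theta,
\]
which enables recursive application of \cref{l:balanced1} inside $\nm{H}$. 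Set $p := \max\{p_0,\, N_{\ref{l:balanced1}}(2\theta, -\Delta\theta, 2)\}$.

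The argument proceeds in three steps. First, apply \cref{l:balanced1} to $G$ with $s=0$, $c=-D_{\ref{c:bounded}}$ (valid because $\phi(G) = f(|V(G)|) \geq -D_{\ref{c:bounded}}$ by \cref{c:bounded}), and target $L := (\theta + 2) l$, obtaining a balanced $q$-patchwork $\mc{H} = (H_1,\ldots,H_{L'})$ in $G$ with $L' \geq L$; each $H_i$ is refined and balanced, but possibly large. If at least $l$ patches satisfy $|V(H_i)| \leq p$, restrict $\mc{H}$ to these indices; restriction of an embedded balanced patchwork remains embedded, as can be verified directly from conditions (E1)-(E4). Otherwise at least $(\theta+1) l$ patches are large, and for each such $H_i$ we apply \cref{l:balanced1} to $\nm{H_i} \in \mc{F}$ with $S = \dH_i$ (so $|S| \leq 2\theta$), $c = -\Delta\theta$, and target $2$, obtaining a $\dH_i$-respectful balanced $q_i'$-patchwork $\mc{H}_i'$ of two patches, each of size at most $p$. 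A direct check shows that $\dH_i$-respectfulness combined with the embedding of $H_i$ in $G$ implies that $\mc{H}_i'$ is embedded in $G$, not merely in $\nm{H_i}$. Finally, by pigeonhole on $q_i' \in \{0,1,\ldots,\theta\}$, at least $l$ of the refined indices share a common value $q^* \leq \theta$, and the corresponding sub-patchworks, lying in pairwise disjoint interiors of the $H_i$'s, are concatenated in the order inherited from $\mc{H}$ to yield a $q^*$-patchwork of $\geq 2l$ balanced patches of size $\leq p$ embedded in $G$.

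The principal technical obstacle lies in verifying the concatenation of the third step, i.e.\ that the sub-patchworks $\mc{H}_i'$ arising from distinct large patches combine into a single embedded patchwork satisfying conditions (E1)-(E4). The critical facts are that (E3) for the original $\mc{H}$ prevents any non-global vertex of $H_i$ from having neighbors in $V(H_j)$ for $j \neq i$, and that $\dH_i$-respectfulness ensures every $\mc{H}$-global vertex appearing in $\mc{H}_i'$ is global in $\mc{H}_i'$ as well. Together these two facts support a case analysis on vertices $v \in V(G) \setminus V(\mc{H}_{\text{combined}})$---distinguishing those in $V(H_i) \setminus V(\mc{H}_i')$ for some $i$ from those in $V(G) \setminus V(\mc{H})$---that verifies (E4) for the combined patchwork; conditions (E1)-(E3) follow from the corresponding conditions for each $\mc{H}_i'$ together with disjointness of the $H_i$'s interiors.
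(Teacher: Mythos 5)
Your overall strategy matches the paper's: start with \cref{l:balanced1} applied to $G$, then for each balanced patch that is too large apply \cref{l:balanced1} recursively inside it (with the boundary as $S$, using the observation that $\phi$ of the induced subgraph is at least $-\Delta\theta$), and finally pigeonhole to assemble a bounded-size patchwork. The key step sizes ($p = N_{\ref{l:balanced1}}(2\theta,-\Delta\theta,\cdot)$, $c=-D_{\ref{c:bounded}}$ for the outer application) are also the right ones, and both you and the paper note that embeddedness in $G$ (rather than merely in $\nm{H'_i}$) follows from $\dH'_i$-respectfulness.

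However, your final pigeonhole step has a genuine gap: you pigeonhole only on $q_i' \in \{0,\ldots,\theta\}$. This does not suffice to ensure condition (E2) for the concatenated patchwork. The issue is the following. Any vertex shared between patches coming from distinct $\mc{H}_i'$ and $\mc{H}_j'$ must lie in $A'$ (the set of $\mc{H}'$-global vertices). By $\dH'_i$-respectfulness, such a vertex $v\in A'$ appearing in $\mc{H}_i'$ is $\mc{H}_i'$-global and hence lies in $\dd{}_l H\cap\dd{}_r H$ for every $H\in\mc{H}_i'$. But nothing forces $v$ to appear in $V(\mc{H}_j')$ at all: the subsets $V(\mc{H}_i')\cap A'$ can differ from index to index. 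If $v$ appears in the patches of $\mc{H}_i'$ and $\mc{H}_k'$ but not in any patch of $\mc{H}_j'$, then $v$ is shared between two patches of the combined patchwork yet fails to lie in $\dd{}_lH\cap\dd{}_rH$ for patches $H$ drawn from $\mc{H}_j'$, violating (E2). The paper avoids this by pigeonholing simultaneously on $q_i$ and on the subset $V(H_i)\cap A'\subseteq A'$ (noting $|A'|\leq\theta$, so there are at most $2^\theta$ possibilities), which is also why the paper's initial target is of order $(\theta+1)2^{\theta}l$ rather than your $(\theta+2)l$. To repair your argument you must add this second pigeonhole and correspondingly enlarge the outer target $L$. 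Finally, the $p_0$ term you introduce via well-quasi-ordering of balanced patches is never used in the argument and should be dropped; wqo plays no role in this lemma.
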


\begin{proof}
	We show that $p=N_{\ref{l:balanced1}}(2\theta,-\theta\Delta ,1)$ and $N=N_{\ref{l:balanced1}}(0,-D_{\ref{c:bounded}}, (\theta+1) 2^{\theta} l)$ satisfy the lemma.  Let $G \in \mc{F}$ with $|V(G)| \geq N$ be extremal. By the choice of $N$ there exists $q' \leq \theta$, $L \geq \theta 2^{\theta} l$ and a $q'$-patchwork $\mc{H}'=(H'_i)_{i \in [L]}$ embedded in $G$ such that  $H'_i$ is balanced for every $i \in [L]$. Let $A'$ be the set of $\mc{H}'$-global vertices.
	
	Note that $\phi(G[V(H'_i)]) \geq -\theta\Delta$ for every $i \in [L]$, as $H'_i$ is embedded in $G$, and $\phi(H'_i)=0$.

	By the choice of $p$, applying \cref{l:balanced1} to $G([V(H'_i)])$ for every $i \in [L]$ such that $|V(H'_i)| \geq p$, there exists $q_i \leq \theta$ and a $\dH'_i$-respectful balanced $q_i$-patch $H_i$ embedded in $G[V(H'_i)]$ such that $|V(H_i)| \leq p$. It is easy to see that such a patch $H_i$ is embedded in $G$. 	
	
	If $|V(H'_i)|< p$,  we define $H_i = H'_i$. 
	
	By the pigeonhole principle there exists $q \leq \theta,$ $A \subseteq A'$ and $I \subseteq [L]$ with $|I|=l$ such that $q_i = q,$ and $V(H_i) \cap A' = A$ for every $i \in I$. Let $\mc{H}=(H_i)_{i \in I}$. A routine verification shows that $\mc{H}$ is a $q$-patchwork embedded in $G$. (In particular, $A$ is the set of all $\mc{H}$-global vertices). Thus $\mc{H}$ satisfies the lemma. 
\end{proof}

We precede the final steps in the proof of \cref{t:periodic} by a simple arithmetical lemma.
	
\begin{lem}\label{l:add}
	Let $p$ be a positive integer, let $m=p \cdot p!$, and let $c_1,c_2, \ldots, c_{m}$ be integers such that $c_i \in [p]$ for every $i \in [m]$. Then there exists $I \subseteq [m]$ such that $\sum_{i \in I}c_i = p!$
\end{lem}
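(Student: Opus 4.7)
The plan is a very short pigeonhole argument exploiting the divisibility structure of $p!$; in fact the parameters $m = p \cdot p!$ and target $p!$ are tuned precisely so that this works in one step.

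First I would apply the pigeonhole principle to the sequence $c_1, c_2, \ldots, c_m$. Since we are distributing $m = p \cdot p!$ values among the $p$ possible slots in $[p]$, there must exist some $v \in [p]$ with
\[ |\{ i \in [m] : c_i = v \}| \geq \frac{m}{p} = p!. \]

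Second I would invoke the one nontrivial arithmetical observation: because $v$ lies in $\{1, 2, \ldots, p\}$, it appears as a factor in $p! = 1 \cdot 2 \cdots p$, so $p!/v$ is a positive integer with $p!/v \leq p!$. Thus we may choose a subset $I$ of indices $i$ satisfying $c_i = v$ of cardinality exactly $p!/v$. Then
\[ \sum_{i \in I} c_i \;=\; (p!/v) \cdot v \;=\; p!, \]
as desired.

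I do not anticipate any real obstacle here; the lemma is essentially an arithmetic statement engineered to admit this pigeonhole-plus-divisibility proof, and no inductive or combinatorial machinery seems necessary. (For contrast, if the target were an arbitrary integer in $[p \cdot m]$ rather than $p!$ the problem would be genuinely harder, but the special form of the target trivializes the argument.)
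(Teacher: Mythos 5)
Your proof is correct and is essentially identical to the paper's: the paper also applies pigeonhole to find a value $v$ repeated at least $p!$ times and then takes $p!/v$ of those indices, using that $v \in [p]$ divides $p!$.
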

\begin{proof}
	By the pigeonhole principle we may assume that $c_1 = c_2 = \ldots = c_{p!}$. Then $I=[p!/c_1]$ satisfies the lemma.
\end{proof}

\begin{cor}\label{c:periodic1} There exists positive integers $P=P_{\ref{c:periodic1}}, N=N_{\ref{c:periodic1}}$ such that $f(n-P) \geq f(n)$ for every $n \geq N$.
\end{cor}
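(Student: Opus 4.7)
The plan is to produce, inside any sufficiently large extremal graph $G$, many balanced refined patches of bounded size, and then use the arithmetic lemma \cref{l:add} to pick out a sub-collection whose total ``interior size'' $\sum(|V(H_i)| - q)$ equals a fixed integer $P$. Contracting this sub-collection yields a smaller graph $G' \in \mc{F}$ with the same $\phi$-value as $G$, which directly gives $f(n - P) \geq f(n)$.

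Concretely, I would set $p := p_{\ref{l:balanced2}}$, put $P := p!$, and choose $N := \max\bigl(P + 1,\, N_{\ref{l:balanced2}}(p \cdot p!)\bigr)$. For $n \geq N$, take an extremal $G \in \mc{F}$ with $|V(G)| = n$ and apply \cref{l:balanced2} with parameter $l := p \cdot p!$. This supplies an integer $q \leq \theta$ and a $q$-patchwork $\mc{H} = (H_i)_{i \in [l]}$ embedded in $G$ in which every $H_i$ is balanced and satisfies $|V(H_i)| \leq p$. Setting $c_i := |V(H_i)| - q$, the non-degeneracy of the patches forces $c_i \in [p]$, so \cref{l:add} furnishes a subset $I \subseteq [l]$ with $\sum_{i \in I} c_i = p! = P$.

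Letting $G' := G \slash (\mc{H}, I)$, \cref{l:contract2} yields $G' \leq G$, hence $G' \in \mc{F}$, and $|V(G')| = n - P$. Using \cref{l:faddition} together with $\phi(H_i) = 0$ for each balanced $H_i$, I obtain
\[
\phi(G') \;=\; \phi(G) \;-\; \sum_{i \in I} \phi(H_i) \;=\; \phi(G) \;=\; f(n).
\]
Since $G' \in \mc{F}$ has $n - P$ vertices, $f(n - P) \geq \phi(G') = f(n)$, which is the desired inequality.

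The main obstacle I anticipate is confirming that the patches furnished by \cref{l:balanced2} are refined (non-degenerate and linked): refinedness is needed both to ensure $c_i \geq 1$ (so $c_i \in [p]$ for \cref{l:add}) and to legitimately invoke \cref{l:contract2}. Although refinedness is not explicitly stated in the lemma, its proof builds patches via \cref{l:balanced1}, which in turn selects a balanced sub-collection of a refined patchwork stitched to $G$ by \cref{t:struct2}; refinedness is a per-patch property and should carry through patch-by-patch. If this does not survive the proof as written, the remedy is a minor strengthening of \cref{l:balanced2} retaining only refined patches, at the cost of enlarging $l$ by a constant factor.
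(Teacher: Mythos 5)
Your proof is essentially identical to the paper's: same choice of $P = p!$ and $N$, same invocation of \cref{l:balanced2} with $l = p\cdot p!$, same use of \cref{l:add} on the sequence $|V(H_i)|-q$, and the same conclusion via \cref{l:faddition}. The concern you flag about refinedness is real but also unaddressed in the paper as written: \cref{l:balanced2} and \cref{l:balanced1} do not state refinedness explicitly, yet \cref{l:contract2} (and hence the proof of \cref{l:faddition}) require it, and non-degeneracy is needed so that $|V(H_i)|-q \in [p]$ for \cref{l:add}. Your reading that the patches produced ultimately derive from the refined patchworks of \cref{t:struct2}, so refinedness propagates, matches the intent of the paper, and your suggested fallback of strengthening \cref{l:balanced2} to assert refinedness outright is the clean way to close the gap.
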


\begin{proof}
	Let $p=p_{\ref{l:balanced2}}$, and let $m=p \cdot p!$. We show that the corollary holds for $P=p!$, and $N=N_{\ref{l:balanced2}}(m)$. 
	
	For $n \geq N$, let $G \in \mc{F}$ be an extremal graph with $|V(G)|=n$.  By the choice of $N$ there exists $q \leq \theta$ and a $q$-patchwork $\mc{H}=(H_i)_{i \in [m]}$ embedded in $G$ such that $H_i$ is balanced and satisfies $|V(H_i)| \leq p$ for every $i \in [m]$. By \cref{l:add} applied to the sequence $\{|V(H_i)|-q\}_{i \in [m]}$ there exists $I \subseteq [m]$  such that $\sum_{i \in I} (|V(H_i)|-q) = P$. Using \cref{l:faddition} we have $$f(n-P) \geq  \phi(G \slash (\mc{H},I)) \geq \phi(G) = f(n),$$ as desired.
\end{proof}

\begin{proof}[Proof of Theorem~\ref{t:periodic}] Let $P=P_{\ref{c:periodic1}}$. By Theorem~\ref{t:rational} there exists an integer $m$ such that $mf(n)$ is integral for every $n$, and so by \cref{c:bounded} the set of values of $f(n)$ is finite.

 For every $i \in [P]$, let $a_i = \lim \inf_{n \to \infty} f(nP+i)$. 
  By the observation above, there exists $M_i$ so that $f(M_iP + i) = a_i$, $M_i \geq N_{\ref{c:periodic1}},$ and $f(nP+i) \geq a_i$ for every $n \geq M_i$. It follows from Corollary~\ref{c:periodic1} that $f(nP+i)=a_i$ for every $n \geq M_i$, implying that $f(n)$ is eventually periodic with period $P$, as desired.
\end{proof}

\section{Proof of Theorem~\ref{t:struct2}}\label{s:struct}

In this section we take  the first steps in the proof of  Theorem~\ref{t:struct2} reducing it to the large treewidth case, more specifically to the case when $G$ contains moderately wide and very long grid minor, which is handled in Theorem~\ref{t:large}, proved in the next session.

We start by introducing the necessary terminology related to tree decompositions,  starting with the standard definitions. 
A \emph{tree decomposition} of a graph $G$ is a pair $(T, \beta)$, where $T$ is a tree and  $\beta$ is a function that to each vertex $t$ of $T$ assigns a subset of
vertices of $G$ (called the \emph{bag} of $t$), such that for every $uv\in E(G)$, there exists $t\in V(H)$ with $\{u,v\}\subseteq \beta(t)$, and
for every $v\in V(G)$, the set $\{t:v\in\beta(t)\}$ induces a non-empty connected subgraph of $H$.
Given a tree decomposition $(T,\beta)$ for an edge $e=st \in E(T)$, let $\beta(e):=\beta(s) \cap \beta(t)$. 
The \emph{order} of a tree decomposition  $(T, \beta)$ is $|V(T)|$, its \emph{width} is equal to $\max_{v \in V(T)}|\beta(v)|$, and its  \emph{adhesion}  is equal to $\max_{e \in E(T)}|\beta(e)|$. The \emph{treewidth} $\tw(G)$ of a graph $G$  is equal to the minimum width of a tree decomposition of $G$, and  the \emph{$\theta$-treewidth} $\tw_{\theta}(G)$  is equal to the minimum width of a tree decomposition of $G$ of adhesion less than $\theta$.
 
A tree decomposition $(T, \beta)$  is \emph{a path decomposition} if $T$ is a path.
We say that a tree decomposition  $(T, \beta)$ is \emph{perfectly linked} if for all edges $e_1,e_2$ of $T$ there exists a $(\beta(e_1),\beta(e_2))$-linkage in $G$ of order $|\beta(e_1)|$. (Thus in particular, $|\beta(e)|$ is the same for all $e \in E(T)$.)  We say that a tree decomposition is \emph{proper} if $\beta(t_1) \not\subseteq\beta(t_2)$ for every $t_1t_2\in E(T)$. A path decomposition of $G$ is \emph{appearance-universal} if every vertex $v\in V(G)$ either appears in all bags of the decomposition,
or in at most two (consecutive) bags.  

The following two lemmas, largely adapted  from ~\cite{DvoNor17}, are used to prove \cref{t:struct2} for graphs with bounded $\theta$-treewidth. For completeness we provide the proof of the first of these lemmas, which is essentially identical to the proof in~\cite[Lemma 25]{DvoNor17}, as its statement is somewhat different. 

\begin{lem}\label{l:tree2path}
	For all $l,w$ there exists $N=N_{\ref{l:tree2path}}(l,w)$ satisfying the following. Let $G$ be a graph with $|V(G)| \geq N$, and let $\theta$ be such that $tw_{\theta}(G) \leq w$, then
	$G$ has a proper path decomposition of adhesion at most $\theta$ and order at least $l$.
\end{lem}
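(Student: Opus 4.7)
The strategy is to extract a proper path decomposition of large order and bounded adhesion from the given tree decomposition $(T, \beta)$ of $G$ of width at most $w$ and adhesion less than $\theta$. First I would make $(T, \beta)$ proper by iteratively contracting any edge $t_1 t_2 \in E(T)$ with $\beta(t_1) \subseteq \beta(t_2)$, keeping the larger bag $\beta(t_2)$. This operation preserves both the width and the adhesion, and produces a proper tree decomposition of $G$. In a proper tree decomposition of width at most $w$, every bag has at most $w+1$ elements, and since every vertex of $G$ lies in some bag, $|V(T)| \geq |V(G)|/(w+1) \geq N/(w+1)$, which can be made arbitrarily large by appropriate choice of $N$.

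The central construction is to find a long path $P = t_1, \ldots, t_L$ in $T$ and collapse all branches onto it: for each $t \in V(T)$, let $\pi(t) \in V(P)$ denote the vertex of $P$ closest to $t$ in $T$, and define $\beta'(t_i) = \bigcup \{\beta(t) : \pi(t) = t_i\}$. The pair $(P, \beta')$ is a path decomposition of $G$ since for each $v \in V(G)$ the subtree $T_v = \{t \in V(T): v \in \beta(t)\}$ is connected, and therefore its image under $\pi$ forms a contiguous sub-path of $P$. The adhesion bound $|\beta'(t_i) \cap \beta'(t_{i+1})| \leq |\beta(t_i) \cap \beta(t_{i+1})| < \theta$ holds because any $v$ in the intersection has $T_v$ crossing the edge $t_i t_{i+1}$ in $T$. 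Properness of $(P, \beta')$ is inherited from properness of $(T, \beta)$: any $v \in \beta(t_i) \setminus \beta(t_{i+1})$ has $T_v$ contained in $\pi^{-1}(t_i)$, so $v \notin \beta'(t_{i+1})$.

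The main obstacle is that a proper tree decomposition need not contain a long path in $T$ even when $|V(T)|$ is large; for example, $T$ could be a star. I would handle this through the dichotomy that any tree with $M$ vertices contains either a path of length $l$ or a vertex of degree at least $M^{1/l}$. The first case is treated above, so suppose some $t_0 \in V(T)$ has very large degree $D$. Each component subtree $T^{(i)}$ of $T - t_0$ has a well-defined interface $S_i = \beta(t_0) \cap \beta(t_i)$ (where $t_i$ is the neighbor of $t_0$ in $T^{(i)}$) of size less than $\theta$. Since $\beta(t_0)$ has at most $w+1$ elements, there are at most $2^{w+1}$ possible interfaces, and by pigeonhole some common interface $S$ is shared by at least $l$ subtrees $T^{(i_1)}, \ldots, T^{(i_l)}$. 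Forming bags $B_j = \bigcup_{t \in T^{(i_j)}} \beta(t)$ for $j = 1, \ldots, l$ and a single side bag $p_0 = \beta(t_0) \cup \bigcup_{i \notin \{i_1, \ldots, i_l\}} B_i$ that absorbs the rest of the graph, one verifies that every vertex appears in a contiguous block of bags, and that consecutive bags intersect only within $S$ (using that vertices in the intersection of $B_j, B_{j'}$ for $j \neq j'$ must lie in $\beta(t_0) \cap \beta(t_{i_j}) \cap \beta(t_{i_{j'}}) = S$), so adhesion is at most $|S| < \theta$. A final make-proper contraction step then yields the required proper path decomposition of order at least $l$, and the bound on $N$ depends only on $l$ and $w$ since the pigeonhole factor $2^{w+1}$ does not involve $\theta$.
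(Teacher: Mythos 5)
Your proof follows essentially the same approach as the paper's: take a proper tree decomposition of width at most $w$ and adhesion less than $\theta$, deduce that $T$ must be large, and then split into the two cases of $T$ having a long path (collapse branches onto it) or a high-degree vertex (pigeonhole on the at most $2^{O(w)}$ possible interfaces and collapse the common-interface components into a path). The only cosmetic differences are that you explicitly justify making the decomposition proper at the outset (the paper takes this for granted), and you defer verifying properness in the high-degree case to a ``final make-proper contraction step'' — it is worth noting that this step could in principle shrink the order below $l$, but in fact the path decomposition you build in that case is already proper (each selected $B_j$ contains a vertex outside $\beta(t_0)$ by properness of $(T,\beta)$, so no containment among consecutive bags occurs), so no contractions happen and the order is preserved.
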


\begin{proof}We show that $N = w((l-1)2^w)^l$ satisfies the lemma. Let $(T,\beta)$ be a proper tree decomposition of $G$ of width at most $w$ and adhesion at most $\theta$. Then $|V(T)| \geq N/w = ((l-1)2^w)^l$.
	
Suppose first that $T$ has a vertex $z$ of degree $m \ge (l-1)2^w$, and let $T_1$, \ldots, $T_m$ be the components of $T-z$, and let $R_i= \beta(z) \cap \bigcup_{x\in V(T_i)} \beta(x)$. By the pigeonhole principle we may assume that $R_1=R_2=\ldots=R_{l-1}$. Let $H$ be a path with $V(H)=[l]$, and let $\gamma(i)=\bigcup_{x\in V(T_i)} \beta(x)$ for $i \in [l-1]$, and let $\gamma(l)= \bigcup_{x\in V(T)-\cup_{i \in [l-1]} V(T_i)} \beta(x)$. 

Otherwise, $T$ contains a subpath $H$ with at least $l$ vertices. For $z\in V(H)$,
let $T_z$ be the component of $T-(V(H)\setminus\{z\})$ containing $z$, and let $\gamma(z)=\bigcup_{x\in V(T_z)} \beta(x)$.
In both cases, $(H,\gamma)$ is a proper path decomposition of $G$ of adhesion at most $\theta$.
\end{proof}

The next lemma is a combination of two lemmas from~\cite{DvoNor17}.

\begin{lem}[\protect{\cite[Lemmas 26 and 27]{DvoNor17}}]\label{l:goodpath}
	For all $l,\theta$  there exists $L=L_{\ref{l:goodpath}}(l, \theta)$ such that if a graph $G$ admits a proper path decomposition of adhesion at most $\theta$ and order at least $L$ then $G$ admits a perfectly-linked, appearance universal, proper path decomposition of adhesion at most $\theta$ and order at least $l$. 
\end{lem}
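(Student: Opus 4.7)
The plan is to prove the lemma by iteratively refining the given path decomposition through a sequence of Ramsey-type pigeonhole reductions and, where needed, Menger-based modifications, eventually isolating a long sub-interval with the desired properties. Throughout, one works with sub-intervals of the ambient path, so properness is inherited automatically, and the adhesion of any sub-interval is bounded above by $\theta$.

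To arrange perfect linkage, first apply pigeonhole to the sequence of adhesion sizes $|\beta(e_i)| \in \{0, 1, \ldots, \theta\}$ across the $L-1$ edges, obtaining a long contiguous sub-interval on which this size is constantly equal to some $k \le \theta$. Then apply a Ramsey-type argument to pairs of edges $(e_i, e_j)$ colored by $\kappa_G(\beta(e_i), \beta(e_j)) \in \{0, 1, \ldots, k\}$, extracting a long sub-sub-interval on which this pairwise connectivity equals some constant $c \le k$. If $c = k$, then $\kappa_G(\beta(e_i), \beta(e_j)) = |\beta(e_i)|$ for every pair, so the sub-sub-interval is perfectly linked. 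If $c < k$, a Menger separator of order $c$ between the extremal adhesion sets contradicts the constancy of adhesion size $k$ in the interior; one can reroute the intermediate bags through this smaller cut, strictly decreasing the adhesion, and iterate (decreasing $k$) until $c = k$.

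To arrange appearance-universality, use the bound $\sum_i |\beta(e_i)| \le \theta(L-1)$: only a bounded number of vertices $v$ have a range $[a_v, b_v]$ of length $r_v \ge 3$, so the set of ``medium-range'' endpoints $\{a_v, b_v\}$ that must be avoided has size $O(\theta L)$. A further pigeonhole argument then produces a sub-interval $[a, b]$ of length at least $l$ such that every vertex $v$ with $r_v \ge 3$ satisfies either $[a_v, b_v] \supseteq [a, b]$ (so $v$ becomes global on the restriction) or $|[a_v, b_v] \cap [a, b]| \le 2$ (so $v$ becomes local). To recover a path decomposition of all of $G$, one augments the leftmost and rightmost bags with the omitted vertices from the truncated portions; since each truncated portion communicates with the retained portion only through a single adhesion set of size at most $\theta$, the new boundary adhesion remains at most $\theta$, and properness can be restored by a small adjustment to the new boundary bags.

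The main anticipated obstacle is the Menger-based adhesion reduction step: one must verify that each round of rerouting produces a valid proper path decomposition with strictly smaller adhesion on the relevant portion and that the overall iteration terminates in $O(\theta)$ rounds, since adhesion is a nonnegative integer bounded by $\theta$. The combination of iterated pigeonhole, pairwise Ramsey, and adhesion reduction yields a tower-type bound on $L$ as a function of $l$ and $\theta$, which is sufficient for the conclusion.
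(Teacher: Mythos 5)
The paper does not prove this lemma: it is imported verbatim from Lemmas 26 and 27 of Dv\v{o}\v{r}\'ak and Norin~\cite{DvoNor17}, so there is no in-text argument to compare against. Assessing your proposal on its own merits, the appearance-universality step contains a genuine gap. You seek a sub-interval $[a,b]$ of length at least $l$ such that every vertex $v$ with $r_v \geq 3$ has $[a_v,b_v]$ either containing $[a,b]$ or meeting it in at most two positions, and you claim pigeonhole over the $O(\theta L)$ medium-range endpoints produces such an interval. It cannot: with $O(\theta L)$ forbidden points among $L$ positions, the longest gap is $O(1/\theta)$, not $l$. Worse, no such sub-interval need exist at all. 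Take the path $x_0x_1\cdots x_{L+1}$ with the proper path decomposition $\beta(u_j)=\{x_{j-1},x_j,x_{j+1}\}$ for $j\in[L]$, which has adhesion $2$; then $r_{x_j}=3$ for all $2\leq j\leq L-1$, so for any $[a,b]$ with $b-a\geq 3$ the vertex $x_{a+2}$ has $[a_{x_{a+2}},b_{x_{a+2}}]=[a+1,a+3]$, which is contained in $[a,b]$ but does not contain it and meets it in three positions. Appearance-universality requires coarsening the decomposition by merging runs of consecutive bags (here, merging consecutive pairs works), not merely truncating to a sub-interval as you propose, and the merging parameter must be chosen carefully to keep both the bounded adhesion and the perfect linkage.

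The perfect-linkage step is also under-justified at the point you flag as the main obstacle. After rerouting through a Menger separator of order $c<k$, a single new adhesion set has size $c$, but the other adhesion sets in the interval need not shrink; in particular the constant value you pigeonhole out of the next iteration can again be $k$, so ``iterate, decreasing $k$'' is not a valid termination argument. The standard way to make this rigorous (Thomas; Bellenbaum--Diestel) is to choose, among all proper path decompositions of adhesion at most $\theta$ and order at least the target, one minimizing a lexicographic fatness measure of the adhesion sizes, and to show the rerouting strictly improves that measure while controlling the loss of order. A further point worth noting is that your first pigeonhole step (``a long contiguous sub-interval with constant adhesion size'') is not immediate: pigeonhole gives many positions with a common adhesion size, not a contiguous run, so you must merge the intervening bags and re-verify properness. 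None of these issues is fatal to the overall plan, but as written the argument does not go through.
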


Lemmas~\ref{l:tree2path} and~\ref{l:goodpath} allow us to prove a version of \cref{t:struct2} in the case when $\theta$-treewidth of $G$ is very small compared to $|V(G)|$.

\begin{lem}\label{l:smallw}
	For all  $l$, $\theta$, $w$  there exist $N=N_{\ref{l:smallw}}(l, \theta,w) $ satisfying the following. Let $G$ is a graph with $\tw_{\theta}(G) \leq w$, $|V(G)| \geq N$ then for some $q \leq \theta$
	there exists a refined $q$-patchwork $\mc{H}$ with $|\mc{H}| \geq l$ stitched to $G$.
\end{lem}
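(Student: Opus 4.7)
The plan is to convert the bounded-$\theta$-treewidth hypothesis into a well-behaved path decomposition via Lemmas~\ref{l:tree2path} and~\ref{l:goodpath}, and then read the patchwork directly off that decomposition. Choose $N$ so that applying \cref{l:tree2path} and then \cref{l:goodpath} yields a perfectly-linked, appearance-universal, proper path decomposition $(P,\beta)$ of $G$, of adhesion $q$ for some $q \leq \theta$, with $P = t_0 t_1 \ldots t_L$ of order at least $5l + 2$. Let $A$ denote the set of \emph{global} vertices (those in every bag); then $A \subseteq \beta(e)$ for every edge $e$ of $P$, so $|A| \leq q$. Partition the indices into blocks $J_k = \{5k-3,5k-2,5k-1\}$ for $k \in [l]$, separated by gaps $\{5k,5k+1\}$, and set $V_k = \bigcup_{j \in J_k}\beta(t_j)$. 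By appearance-universality every non-global vertex occupies at most two consecutive bags, so $V_k \cap V_{k'} = A$ for $k \neq k'$, and no non-global vertex of a gap bag belongs to any $V_{k''}$ other than possibly one of the two adjacent $V_k$'s via the corresponding adhesion.

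For each $k$, define the candidate patch by $\nm{H_k} = G[V_k]$, $\dlH_k = \beta(e_{5k-3})$, and $\drH_k = \beta(e_{5k})$; both boundaries have size exactly $q$ and are contained in $V_k$. Since $\beta(e_{5k-3})$ and $\beta(e_{5k})$ are separators of $G$, properties (E1), (E3), (E4) follow immediately, and properness of $(P,\beta)$ forces $|V_k| > q$, so $H_k$ is non-degenerate. Using perfectly-linkedness together with a standard truncation argument (reroute each path at its last visit to $\dlH_k$ and first visit to $\drH_k$, using that these boundaries are $G$-separators to force the result inside $V_k$), I will produce a linkage $\mc{L}_k$ of size $q$ in $G[V_k]$ from $\dlH_k$ to $\drH_k$, and analogously a linkage $\mc{M}_k$ of size $q$ in $G[\beta(t_{5k}) \cup \beta(t_{5k+1})]$ from $\drH_k$ to $\dlH_{k+1}$, whose interiors avoid both endpoints sets. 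Crucially, I arrange for each $v \in A$ to lie on a trivial length-zero path in both $\mc{L}_k$ and $\mc{M}_k$: this is possible because a Menger argument (lifting any separator $S \subseteq V_k \setminus A$ to the separator $A \cup S$ in $G$) shows connectivity at least $q - |A|$ between $\dlH_k \setminus A$ and $\drH_k \setminus A$ in $G[V_k] - A$, and similarly in the gap subgraph.

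Now define $a_{H_k},b_{H_k}$ inductively: fix $a_{H_1}$ arbitrarily, subject to the constraint that each $v \in A$ receives a fixed index $i_v$ used in all subsequent labelings; define $b_{H_k}$ from $a_{H_k}$ by following $\mc{L}_k$, and $a_{H_{k+1}}$ from $b_{H_k}$ by following $\mc{M}_k$. Because global vertices lie on trivial paths, the index $i_v$ is preserved throughout, so each $v \in A$ satisfies $a_{H_k}(i_v) = b_{H_k}(i_v) = v$ for every $k$, which gives (E2) and ensures the product identifies global vertices with themselves. Each $H_k$ is linked (witnessed by $\mc{L}_k$) and hence refined; the paths in the $\mc{M}_k$'s form the stitching collection, their interiors being disjoint from $V(\mc{H})$ because (interior vertices lie in gap bags but outside $\beta(e_{5k-3}), \beta(e_{5k+1}), A$), and disjoint across different values of $k$ because the gap bags for different $k$ share only global vertices, which are excluded from path interiors. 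The main technical hurdle is the double requirement that the labeling be both consistent with a linkage inside each patch and respect the identification of global vertices across patches; routing global vertices along trivial paths is the key device that decouples these demands and lets the inductive labeling construction succeed.
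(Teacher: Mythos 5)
Your proposal uses the same two preparatory lemmas (\cref{l:tree2path} and \cref{l:goodpath}) to reach a perfectly-linked, appearance-universal, proper path decomposition and then extracts patches from bags separated by buffer bags, which is exactly the paper's strategy; the difference lies in how the labelings and stitching paths are produced. The paper takes a single $(\beta(e_1),\beta(e_{k-1}))$-linkage $\mc{Q}$ of size $q$; since every internal adhesion set $\beta(e_j)$ is a $q$-vertex separator between the two ends, each path $Q_i$ meets each $\beta(e_j)$ in exactly one vertex $v_{i,j}$, and one simply sets $a_{H_j}(i)=v_{i,j}$, $b_{H_j}(i)=v_{i,j+1}$. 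This renders the entire labeling bookkeeping automatic: consistency of labels across patches is built in, a global vertex $v$ satisfies $v=v_{i,j}$ for \emph{all} $j$ (forcing $Q_i$ to be the trivial path $\{v\}$, so your ``global vertices on trivial paths'' device falls out for free rather than having to be engineered), and the stitching paths are just subpaths of the $Q_i$, making their pairwise disjointness and disjointness from patch interiors immediate. Your alternative --- separate linkages $\mc{L}_k$ and $\mc{M}_k$ in each block and gap, glued by an inductive re-labeling --- can be made to work, but it replaces one Menger application with several, requires the Menger argument to be carried out in $G[V_k]-A$ rather than $G$, and forces you to verify separately that global vertices can be routed on trivial paths in every $\mc{L}_k$ and $\mc{M}_k$, that the inductive labels compose correctly, and that the $\mc{M}_k$'s are mutually disjoint and disjoint from $V(\mc{H})$. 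None of these steps is wrong, but each is an extra obligation that the single global linkage dissolves, so it is worth internalizing the global-linkage trick: when the decomposition is perfectly linked, one linkage spanning the whole decomposition carries all the local linkage and labeling data simultaneously.
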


\begin{proof}
	Let $k=3l$, and let $N=N_{\ref{l:tree2path}}(L_{\ref{l:goodpath}}(k, \theta),w)$. By the choice of $N$, the graph $G$ admits a perfectly-linked, appearance universal, proper path decomposition $(R,\beta)$ for some path $R$ with $|V(R)| = k$. Let $u_1,\ldots,u_k$ be the vertices of $R$ in order. Let $e_i = u_iu_{i+1}$. As  $(R,\beta)$ is perfectly-linked there exists $q \leq \theta$ such that $|\beta(e_i)|=q$  for every $i \in [k-1]$.  Let $\mc{Q}=(Q_1,\ldots,Q_q)$ be a $(\beta(e_1),\beta(e_{k-1}))$-linkage in $G$. For all $i \in [q], j \in [k-1]$, let $v_{i,j}$ denote the unique vertex in $V(Q_i) \cap \beta(e_j)$. 
		
	Let $H_j$ be a $q$-patch with $\nm{H_j}= G[\beta(u_{j+1})]$, $a_{H_j}(i)=u_{i,j}, b_{H_j}(i)=u_{i,j+1}$. Then $H_j$ is embedded in $G$. The patch $H_j$  is  non-degenerate, as  $(R,\beta)$ is proper, and the restriction of $\mc{Q}$ to $H_j$ shows that $H_j$ is linked. Thus, $H_j$ is refined.
	
	We claim that the refined $q$-patchwork $\mc{H}=(H_{3j+1})^{l-1}_{j=0}$ satisfies the lemma.  
	
	First, we verify that $\mc{H}$ is embedded in $G$.  The condition (E1) has been verified above.  The condition (E2) holds as $(R,\beta)$ is appearance-universal. In particular, $\mc{H}$-global vertices are exactly the vertices of $G$ appearing in all the bags of $(R,\beta)$. We say that these vertices are \emph{global} and the remaining vertices of $G$ are  \emph{local}.
	
	If $v \in V(G)$ has a local neighbor in $V(H_{3j+1})= \beta(u_{3j+2})$, then $v \in  \beta(u_{3j+1}) \cup \beta(u_{3j+2}) \cup \beta(u_{3j+3})$. Suppose first that $v \in V(H_{3j+1})$ is local. Then  all the neighbors of $v$ in $V(\mc{H})$ lie in $V(H_{3j+1})$. As (E3) clearly holds for every global vertex, it follows that (E3) holds for every $v \in V(\mc{H})$. 
	
	If $v \in V(G) - V(\mc{H})$ then either \begin{itemize}
		\item $v \in \beta(u_{3j+1}) - \beta(u_{3j+2})$, in which case all the neighbors of $v \in V(\mc{H})$ lie in $\dlH_{3j+1}$, or
			\item $v \in \beta(u_{3j+3}) - \beta(u_{3j+2})$, and all the neighbors of $v \in V(\mc{H})$ lie in $\drH_{3j+1}$.
	\end{itemize}
	Thus (E4) holds, and so $\mc{H}$ is embedded in $G$. 
	
	Finally,	let $\mc{P}=(P_{i,j})_{i \in [q], j \in [l-1]}$ be a collection of paths, where $P_{i,j}$ is the subpath of $Q_i$ with ends $v_{i,3j+2}=b_{H_{3j+1}}(i)$ and $v_{i,3j+4}=a_{H_{3j+4}}(i)$. It is routine to verify that $\mc{H}$  is stitched to $G$ by $\mc{P}$, as claimed.
\end{proof}

In the remaining case, when $\theta$-treewidth is large, we utilize the results of  Geelen and Joeris~\cite{GeeJoe16}, which we restate using the terminology of this paper.

Let  $T$ be a tree, and let $Z \subseteq V(T)$ be non-empty, such that every $v \in V(T)-Z$ is a leaf of $T$. We assume for convenience that $V(T)=[q]$ and $Z=[z]$.  We define a \emph{$Z$-extension of $T$} to be a $q$-patch $T'$ defined as follows:
\begin{itemize}
	\item $V(T')=[q + z]$, 
	\item $E(T')=E(T) \cup \{\{i,(i+q)\}\}_{i \in Z},$
	\item $a_{T'}(i)=i$ and $b_{T'}(i)=i+q$ for $i \in Z$, and
	\item $a_{T'}(i)=b_{T'}(i)=i$ for $i \in V(T)-Z$.
\end{itemize}	
Thus $T'$ is a refined patch, which we call a \emph{tree-extension patch}. 

The main result of~\cite{GeeJoe16} implies that a large enough  $\theta$-treewidth implies existence of any fixed power of a tree-extension patch on $\theta$ vertices or a big complete unbalanced bipartite minor.

\begin{thm}[\protect{\cite[Theorem 1.1]{GeeJoe16}}]\label{t:wheel}
	For every pair of integers $\theta \geq 2, n \geq 1$  there exists $w = w_{\ref{t:wheel}}(\theta,n)$, such that for every graph $G$ with $\tw_{\theta}(G) \geq w$ either $K_{\theta,n} \leq G$, or $\nm{T^n} \leq G$ for some tree-extension patch $T$ with $|V(T)| \geq \theta$.
\end{thm}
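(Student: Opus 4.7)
The natural approach is to split on the ordinary treewidth $\tw(G)$, setting $w = w_{\ref{t:wheel}}(\theta,n)$ to grow sufficiently fast in both parameters. If $\tw(G) \geq f(\theta,n)$ for some large enough function $f$, then the classical Robertson--Seymour grid theorem yields a $k \times k$ grid minor in $G$ for $k \geq \theta+n$, and such a grid evidently contains $K_{\theta,n}$ as a minor (use $\theta$ disjoint horizontal rows as one side of the bipartition and $n$ disjoint vertical columns meeting them as the other). So the interesting case is when $\tw(G)$ is bounded by $f(\theta,n)$ but $\tw_\theta(G) \geq w$, i.e.\ the treewidth is small only if we allow adhesion at least $\theta$.

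In this second case, fix a tree decomposition $(T,\beta)$ of $G$ realizing $\tw(G)$; by hypothesis, some adhesion sets $\beta(e)$ along edges $e$ of $T$ have size at least $\theta$. I would first clean up $(T,\beta)$ so that it is proper, has bounded width, and has the property that a large subtree (in fact, after iterating pigeonhole arguments of the Dvo\v{r}\'ak--Norin type already used in \cref{l:tree2path} and \cref{l:goodpath}, a long path) $P = u_0 u_1 \cdots u_m$ of $T$ consists entirely of adhesion sets of the same size $q \geq \theta$, with pairwise isomorphic bags in a strong sense: the isomorphism type of $G[\beta(u_i)]$ together with the positions of $\beta(e_{i-1})$ and $\beta(e_i)$ inside it should all coincide, after relabeling, for $i$ in some long subsequence. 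A repeated pigeonhole over the (bounded number of) possible local patterns achieves this. Applying Menger's theorem at the two ends of $P$ then produces $q$ pairwise disjoint paths in $G$ that, by standard rerouting, can be made to cross every intermediate adhesion set in a fixed order. This data already gives a refined $q$-patchwork of length proportional to $m$, consisting of many identical patches $H$.

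The remaining, and hardest, step is to promote this common patch $H$ to an actual \emph{tree-extension} patch, i.e.\ to show that the bounded-treewidth piece glued between two adjacent adhesion sets can be reduced by minor operations to a tree on $q$ vertices with $z$ designated leaves spanning both boundaries. Because each bag has bounded treewidth and the two adhesion sets together have size $2q$, one can contract a spanning connected structure inside each bag to a tree, choose a subtree containing all boundary vertices, and then suppress degree-two internal vertices to obtain a tree-extension shape; pigeonholing over the finitely many such shapes across the long path $P$ stabilizes a single tree-extension patch $T$ appearing as a minor in at least $n$ of the patches of the patchwork. Combining with \cref{o:patchwork1} then gives $\nm{T^n} \leq G$. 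The main obstacle is precisely this last reduction: keeping track simultaneously of (i)~which boundary vertices survive the internal contractions, (ii)~the labeling $a_T,b_T$, and (iii)~the fact that the resulting shape is the \emph{same} for $n$ consecutive bags, is where essentially all of the combinatorial work lies, and is why the statement is quantitatively delicate.
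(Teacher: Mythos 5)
The paper does not prove this theorem at all: it is imported verbatim from Geelen and Joeris \cite{GeeJoe16} (their Theorem~1.1) and used as a black box, so there is no internal argument to compare your sketch against.

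As a standalone attempt, your proof has a concrete and fatal error in the first case. You claim that a $k\times k$ grid minor with $k \geq \theta + n$ ``evidently contains $K_{\theta,n}$ as a minor,'' taking $\theta$ rows as one side of the bipartition and $n$ columns as the other. This is false. The rows and columns of a grid are not pairwise disjoint, so they cannot serve as branch sets of a minor model; more fundamentally, a grid is planar, whereas $K_{\theta,n}$ contains $K_{3,3}$ once $\theta, n \geq 3$, so $K_{\theta,n}$ cannot be a minor of any planar graph, let alone a grid. What a large grid minor actually gives you is the \emph{second} alternative of the theorem, not the first: an $(n{+}1)\times q$ grid is exactly $\nm{T^n}$ for the tree-extension patch obtained from a path $T$ on $q$ vertices with $Z = V(T)$, so a grid minor of height $\geq \theta$ directly yields a tree-extension patch $T$ with $|V(T)| \geq \theta$ and $\nm{T^n} \leq G$. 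Your case split should have concluded that way.

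The second case is also far from a proof. You acknowledge that ``essentially all of the combinatorial work'' is in the final reduction to a tree-extension shape, and that acknowledgement is accurate; but nothing in the sketch addresses how to carry it out. Moreover, the actual argument of Geelen and Joeris does not proceed by first bounding ordinary treewidth and then cleaning up a path of bags. Their proof (stated and proved for matroids) works directly from large $\theta$-branchwidth, extracting a ``generalized wheel'' via Tutte-style connectivity arguments, without the intermediate grid/bounded-treewidth dichotomy you propose. So even the high-level architecture here diverges from the source, and as written there is a genuine gap: the key reduction is asserted, not proved, and the large-treewidth branch reaches the wrong conclusion.
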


Geelen and Joeris~\cite{GeeJoe16} also note that powers of tree-extension patches can be converted to grid minors, which for us are more convenient to work with.
An \emph{$(n,s)$-grid} is a graph with the vertex set $[n] \times [s]$ and an edge joining vertices $(x_1,y_1)$ and $(x_2,y_2)$ if and only if $|x_1-x_2|+|y_1-y_2|=1$. That is an $(n,s)$-grid is a Cartesian product of a path on $s$ vertices and a path on $n$ vertices.
The following lemma, implicit in the proof of \cite[Theorem 10.1]{GeeJoe16}.

\begin{lem}[\protect{\cite{GeeJoe16}}]W\label{l:treepower} For every positive integer $s$ there exists $\theta=\theta_{\ref{l:treepower}}(s)$ such that for every tree-extension patch $T$ with $|V(T)| \geq \theta$ and every positive integer $n$ the graph $\nm{T^n}$ contains either $K_{s,n}$ or an $(n,s)$-grid as a minor. 
\end{lem}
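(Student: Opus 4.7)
The plan is to describe $\nm{T^n}$ explicitly in terms of the underlying tree, and then split into two cases depending on whether that tree contains a high-degree vertex (yielding a $K_{s,n}$-minor) or a long path (yielding an $(n,s)$-grid minor).

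First I would unpack $\nm{T^n}$. Write $T$ as the $Z$-extension of a tree $\tau$ on vertex set $[q]$. Each vertex $i \in V(\tau) - Z$ is a leaf of $\tau$, and since $a_T(i) = b_T(i) = i$, all $n$ copies agree on this vertex, producing a single \emph{global vertex} $u_i$. Each $i \in Z$ instead contributes a \emph{thread path} $P_i$ on $n+1$ vertices $v_i^{(1)},\ldots,v_i^{(n+1)}$, formed by the $n$ copies of the pendant edge $\{i, i+q\}$. The edges of $\tau$ in the $k$-th copy become the ``horizontal'' edges at level $k$, joining $v_i^{(k)}$ and $v_j^{(k)}$ for every $\{i,j\} \in E(\tau)$, with the convention that $v_i^{(k)} = u_i$ when $i \notin Z$. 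Since $|Z| \le q$, the hypothesis $|V(T)| = q + |Z| \ge \theta$ gives $q \ge \theta/2$.

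Next I would choose $\theta = \theta_{\ref{l:treepower}}(s)$ large enough (a crude value such as $\theta = 2 s^{s+2}$ suffices) so that every tree on at least $\theta/2$ vertices contains either a vertex of degree at least $s$ or a path on at least $s+2$ vertices; this is a standard consequence of the bound on the size of a tree with maximum degree less than $s$ and diameter at most $s+1$. For the high-degree case, assuming $s \ge 2$ (the case $s=1$ is immediate since $(n,1)$-grid $= P_n$ is contained in $P_i$ for any $i \in Z$), a vertex $c \in V(\tau)$ with $\deg_\tau(c) \ge s$ is internal and hence lies in $Z$. Pick distinct neighbors $N_1, \ldots, N_s$ of $c$ in $\tau$, and set $U_j := V(P_{N_j})$ if $N_j \in Z$ and $U_j := \{u_{N_j}\}$ otherwise. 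The sets $U_1, \ldots, U_s$ and the singletons $\{v_c^{(1)}\}, \ldots, \{v_c^{(n)}\}$ are pairwise disjoint, each induces a connected subgraph of $\nm{T^n}$, and the $k$-th copy of $\tau$ supplies an edge from $v_{N_j}^{(k)} \in U_j$ to $v_c^{(k)}$ for every $(j,k) \in [s] \times [n]$. This exhibits a model of $K_{s,n}$ in $\nm{T^n}$.

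For the long-path case, fix a path $i_0 - i_1 - \cdots - i_{s+1}$ in $\tau$. Each interior vertex $i_j$ with $j \in [s]$ has degree at least two in $\tau$, so it is not a leaf and hence lies in $Z$. The subgraph of $\nm{T^n}$ induced on $\{v_{i_j}^{(k)} : j \in [s],\ k \in [n]\}$ consists of $s$ truncated thread paths joined by the horizontal edges $v_{i_j}^{(k)} v_{i_{j+1}}^{(k)}$ at each level $k$, and no other edges appear, since $\tau$ is acyclic and hence has no ``shortcut'' edges $\{i_j, i_{j'}\}$ with $|j - j'| \ge 2$. This induced subgraph is precisely an $(n,s)$-grid. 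The main obstacle is the bookkeeping in the first step: the identification pattern across the $n$ copies of $T$ depends delicately on whether each vertex of $\tau$ belongs to $Z$, and one must track this carefully in order to verify that the branch sets in the two cases really are pairwise disjoint and connected; once $\nm{T^n}$ is described cleanly, both the $K_{s,n}$-extraction and the $(n,s)$-grid extraction are short.
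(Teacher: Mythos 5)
The paper does not prove this lemma, citing it as implicit in the proof of Theorem~10.1 of Geelen--Joeris, so there is no in-paper argument to compare against; judged on its own merits, your proof is correct. Your explicit description of $\nm{T^n}$ is right: for $i\in Z$ the $n$ copies of the pendant edge $\{i,i+q\}$ concatenate into a thread path $P_i$ on $n+1$ vertices, while $i\notin Z$ collapses to a single global vertex, and the copies of $E(\tau)$ give horizontal edges at levels $1,\dots,n$. The dichotomy ``high-degree vertex or long path'' applied to $\tau$ (with $q\ge\theta/2$ since $|Z|\le q$) is the natural one, and both extractions go through: a degree-$s$ vertex $c$ lies in $Z$, giving the $K_{s,n}$-model with branch sets $U_1,\dots,U_s$ on one side and $\{v_c^{(1)}\},\dots,\{v_c^{(n)}\}$ on the other; a path $i_0-\cdots-i_{s+1}$ has all interior vertices in $Z$, and the vertices $\{v_{i_j}^{(k)}: j\in[s],\ k\in[n]\}$ induce exactly the $(n,s)$-grid since $\tau$ being a tree rules out chords among $i_1,\dots,i_s$. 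The $s=1$ side case and the crude choice $\theta=2s^{s+2}$ are both fine (you have ``diameter at most $s+1$'' where $\le s$ is what follows, but the bound is generous enough that this is immaterial). A minor simplification worth noting: you do not actually need to verify the grid appears as an \emph{induced} subgraph; a subgraph suffices for a minor, which would let you skip the ``no shortcut edges'' remark.
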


The final and most technical ingredient of the proof of \cref{t:struct2} is the following theorem, which combined with \cref{t:wheel} and \cref{l:treepower} guarantees a large refined $q$-patchwork in the ``large treewidth'' case.

\begin{thm}[\protect{\cite{GeeJoe16}}]\label{t:large}
	For every integer $t \geq 5$ there exists $s=s_{\ref{t:large}}(t)$ such that for every $l$  there exist $n=n_{\ref{t:large}}(t,l)$ satisfying the following. Let $G$ be a graph  with an $(n,s)$-grid minor such that $K_t \not \leq G$. Then for some $q \leq t-2$ there exists a refined $q$-patchwork $\mc{H}$  with $|\mc{H}| \geq l$ stitched to $G$.
\end{thm}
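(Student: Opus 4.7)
My plan is to invoke the asymmetric Flat Wall theorem (from Chuzhoy, cited in the introduction) on the given long grid, extracting a long flat strip modulo a small apex set, and then carve that strip into the desired patchwork. I will choose $s = s_{\ref{t:large}}(t)$ large enough that, when the theorem is applied to the $(n,s)$-grid in $G$ (using $K_t \not\leq G$), it produces an apex set $A \subseteq V(G)$ with $|A| \leq t-5$ together with a flat subgrid $W$ in $G - A$ of some chosen width $s_0$ satisfying $s_0 + |A| \leq t-2$. The length parameter $n = n_{\ref{t:large}}(t,l)$ will be chosen so that $W$ still has at least $3l$ columns. The two crucial consequences of flatness that I will exploit are: (i) the $s_0$ branch-vertices of any column of $W$ form a separator of $G - A$, so together with $A$ they form a size-$q$ separator of $G$, where $q := s_0 + |A| \leq t-2$; and (ii) every vertex in $V(G) \setminus (A \cup V(W))$ attaches to $W$ only within a single wall-brick, so its neighborhood in $W$ is confined to one strip of the wall.

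Next I select $l$ pairs of columns $c_1 < c_1' < c_2 < c_2' < \ldots < c_l < c_l'$ in $W$, arranged so that at least one intermediate column of $W$ lies between every $c_i'$ and $c_{i+1}$. For each $i \in [l]$, the $q$-patch $H_i$ will have vertex set $A$ together with the vertices of $G - A$ lying in the strip of $W$ between columns $c_i$ and $c_i'$, and $\llbracket H_i \rrbracket$ will be the induced subgraph of $G$ on this set. The left boundary maps labels $1,\dots,s_0$ to the $s_0$ branch-vertices of column $c_i$ and labels $s_0+1,\dots,q$ to the vertices of $A$ (consistently across all patches); the right boundary maps labels $1,\dots,s_0$ to the branch-vertices of column $c_i'$ and labels $s_0+1,\dots,q$ to the same apex vertices, with $a_{H_i}(k) = b_{H_i}(k)$ for $k > s_0$. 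Each $H_i$ is non-degenerate because the strip has wall-interior vertices, and a linkage of $H_i$ is obtained by combining $s_0$ pairwise disjoint horizontal grid paths through the strip with the trivial single-vertex paths at the apex labels, so $H_i$ is refined.

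Verifying that $\mc{H} = (H_1,\dots,H_l)$ is embedded in $G$ is then largely built in: (E1) is by construction; (E2) holds because the $\mc{H}$-global vertices are exactly the apex vertices, while each wall column belongs to a unique patch; (E3) and (E4) follow from the neighborhood-localization consequence of flatness noted above. The stitching paths $P_{k,i}$ for $k \leq s_0$ are taken as pairwise disjoint horizontal grid paths in $W$ from column $c_i'$ to column $c_{i+1}$, running through the intermediate columns; for apex labels $k > s_0$ the corresponding path is the single shared apex vertex. The principal technical obstacle I anticipate is the careful invocation of the asymmetric Flat Wall theorem in exactly the quantitative form needed — supplying both the column-separator property and the neighborhood-localization outside $W$ — and a secondary subtlety is handling the apex labels, which must appear in every patch's boundary to keep $q \leq t-2$, in harmony with the disjointness condition of the stitched-patchwork definition via its ``degenerate label'' clause.
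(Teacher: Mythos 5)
Your proposal is built on two claimed consequences of flatness, both of which are false, and the gap between them and what the Flat Wall theorem actually gives is exactly where the real work in the paper's proof happens.

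What the (crossless) Flat Wall theorem controls is only the \emph{compass} of the wall embedding $\eta$ --- the subgraph $\uparrow\eta$ living ``inside'' the outer cycle. Nothing whatsoever is asserted about vertices of $G\setminus A$ that lie outside $\uparrow\eta$, and such vertices can attach to the outer cycle of $\eta$ arbitrarily. This wrecks both of your key points. Claim~(i), that the branch vertices of a column of $W$ separate $G-A$, fails because a vertical path in the wall has its endpoints on the top and bottom rows of $W$, which lie on the outer cycle; any external vertex adjacent to both sides of the wall's outer cycle gives a path around your alleged separator. Claim~(ii), that every vertex outside $A\cup V(W)$ attaches to $W$ only within one brick, holds only for vertices \emph{inside} the compass (that is what \cref{l:separate} gives); for vertices outside the compass it simply does not hold. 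Consequently the strips you build are not embedded patches: the top and bottom rows of each strip lie in $V(H_i)-\partial H_i$ but can have neighbours in $V(G)-V(H_i)$, violating the embedding condition. You could repair this by adding the top and bottom of the strip to the boundary, but then $q$ grows with the strip width rather than being bounded by $t-2$.

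There is also a quantitative problem. Since $|A|$ can be as large as $t-5$, your requirement $s_0+|A|\le t-2$ forces $s_0\le 3$. But a wall of height at most $3$ has essentially no interior, and the nested-cycle ``filter'' structure that makes the compass useful disappears; the crossless hypothesis becomes nearly vacuous. The paper avoids both problems at once by \emph{not} using a full column as the patch boundary. It works with a wall of moderate height ($h=18$) and, deep inside each crossless subwall (well inside the nested cycles $C_0,\dots,C_7$, hence genuinely separated from $\partial\eta$ and from everything outside the compass by \cref{l:separate}), extracts a tiny \emph{diamond patch} with $q\in[3]$. Producing such a patch, establishing that it is embedded and that its boundary links out to prescribed positions on $\partial\eta$, is the content of \cref{l:diamond}, which is the technical heart of the argument and relies on a maximality choice and the filter lemma (\cref{l:filter}); this step has no counterpart in your sketch. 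Only after that does the paper adjoin the apex set $A$ to each patch, giving $q\le 3+(t-5)=t-2$, and stitch the patches through the surrounding wall.
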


The proof of \cref{t:large} is postponed to \cref{s:large}.

Combining the above results we can now prove the following weaker version of \cref{t:struct2}.

\begin{lem}\label{l:struct}
	For every proper minor-closed class $\mc{F}$ there exists $\theta=\theta_{\ref{l:struct}}(\mc{F})$ such that for every $l$  there exist $N=N_{\ref{l:struct}}(\mc{F},l)$ satisfying the following. Let $G \in \mc{F}$ be a graph with $|V(G)| \geq N$ then for some $q \leq \theta$
	there exists a refined $q$-patchwork $\mc{H}$  with $|\mc{H}| \geq l$ stitched to $G$.
\end{lem}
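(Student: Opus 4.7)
The plan is to dichotomize on the $\theta$-treewidth of $G$, combining \cref{l:smallw} for the bounded case with \cref{t:wheel}, \cref{l:treepower}, and \cref{t:large} for the unbounded case. Since $\mc{F}$ is proper, there exists an integer $t \geq 5$ with $K_t \notin \mc{F}$; fix such a $t$. I would choose the constants as follows: let $s = \max\{s_{\ref{t:large}}(t),\, t-1\}$, let $\theta = \max\{t-1,\, \theta_{\ref{l:treepower}}(s)\}$, and, given $l$, let $n = \max\{t,\, n_{\ref{t:large}}(t,l)\}$, $w = w_{\ref{t:wheel}}(\theta,n)$ (valid as $\theta \geq 4$), and $N = N_{\ref{l:smallw}}(l,\theta,w)$.

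Now fix $G \in \mc{F}$ with $|V(G)| \geq N$. If $\tw_{\theta}(G) \leq w$, then \cref{l:smallw} produces the desired refined $q$-patchwork immediately. Otherwise $\tw_{\theta}(G) > w$, so \cref{t:wheel} applies and gives one of two alternatives: either $K_{\theta, n} \leq G$, or $\nm{T^n} \leq G$ for some tree-extension patch $T$ with $|V(T)| \geq \theta$.

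The first alternative leads to a contradiction: taking the branch sets $\{a_i,b_i\}$ for $i$ from $1$ to $\min(\theta,n)-1$ together with the singletons $\{a_{\min(\theta,n)}\}$ and $\{b_{\min(\theta,n)}\}$ exhibits $K_{\theta, n} \geq K_{\min(\theta,n)+1}$, and since $\min(\theta,n) \geq t-1$ this forces $K_t \leq G$, contradicting $G \in \mc{F}$. So we are in the second alternative. Since $|V(T)| \geq \theta \geq \theta_{\ref{l:treepower}}(s)$, \cref{l:treepower} implies $G$ has either $K_{s,n}$ or an $(n,s)$-grid as a minor. The $K_{s,n}$ case is eliminated by the same branch-set argument, since $\min(s,n) \geq t-1$ as well. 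Thus $G$ has an $(n,s)$-grid minor; restricting to the first $s_{\ref{t:large}}(t)$ columns yields an $(n, s_{\ref{t:large}}(t))$-grid minor. With $n \geq n_{\ref{t:large}}(t,l)$ and $K_t \not\leq G$, \cref{t:large} delivers a refined $q$-patchwork $\mc{H}$ with $q \leq t-2 \leq \theta$ and $|\mc{H}| \geq l$ stitched to $G$, as required.

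The main subtlety is coordinating the parameters so that every bipartite-minor branch from \cref{t:wheel} and \cref{l:treepower} is ruled out. This is what forces the choices $\theta \geq t-1$ and $s \geq t-1$: they are exactly what is needed so that both $K_{\theta, n}$ and $K_{s, n}$ contain $K_t$ as a minor once $n \geq t-1$. Once these bipartite cases are eliminated, the remaining options feed cleanly into \cref{l:smallw} in the bounded-treewidth case and into \cref{t:large} in the large-treewidth case.
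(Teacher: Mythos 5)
Your proposal is correct and follows essentially the same route as the paper: dichotomize on $\tw_\theta(G)$, using \cref{l:smallw} when $\tw_\theta(G)$ is small and chaining \cref{t:wheel}, \cref{l:treepower}, and \cref{t:large} when it is large, with the complete-bipartite alternatives ruled out by the forbidden $K_t$. You are in fact a touch more careful than the paper, which silently discards the $K_{s,n}$ alternative in \cref{l:treepower}; your explicit branch-set argument showing $K_{a,b} \geq K_{\min(a,b)+1}$ covers both the $K_{\theta,n}$ and $K_{s,n}$ cases cleanly.
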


\begin{proof}
	Let $t \geq 5$ be such that $K_t \not \in \mc{F}$. Let  $s=\max\{s_{\ref{t:large}}(t),t\}$,  $n=\max\{n_{\ref{t:large}}(t,l),t\}$,  $\theta=\max\{\theta_{\ref{l:treepower}}(s), t\}$, and $w= w_{\ref{t:wheel}}(\theta,n)$. Finally, let $N=N_{\ref{l:smallw}}(l, \theta, w)$. 
	
	Let $G \in \mc{F}$ be a graph with $|V(G)| \geq N$. Suppose first that $\tw_{\theta}(G) \geq w$. Then by \cref{t:wheel} either $K_{\theta,n} \leq G$ or $\nm{T^n} \leq G$ for some tree-extension patch $T$ with $|V(T)| \geq \theta$. In the first case we have $K_t \leq K_{t,t} \leq G$, contradicting the choice of $t$. In the second case,  using \cref{l:treepower} we conclude that $G$ has an $(n,s)$-grid minor, and so the lemma holds by \cref{t:large}.
	
	In the remaining case $\tw_{\theta}(G) \leq w$,  and so \cref{l:smallw} applies to $G$ by the choice of $N$, and so our lemma also holds.
\end{proof}

Finally, we bootstrap \cref{l:struct} to obtain  \cref{t:struct2}. We precede the proof by two useful observations, which are immediate from the definitions.

\begin{obs}\label{o:respect1}
	Let $G$ be a graph, let $S \subseteq V(G)$, and let $\mc{H}$  be a patchwork embedded in $G$ with $|\mc{H}| > |S|$. Then there exists an $S$-respectful restriction $\mc{H}'$ of $\mc{H}$ such that $|\mc{H}'| \geq |\mc{H}|-|S|$. 
\end{obs}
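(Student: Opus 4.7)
The plan is to remove from $\mc{H}$ every patch that contains a non-$\mc{H}$-global vertex of $S$, and to check that the resulting restriction is still an embedded patchwork.

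First I would observe that by condition (E2), any vertex lying in two distinct patches of $\mc{H}$ is automatically $\mc{H}$-global. Consequently, each $v \in S \cap V(\mc{H})$ that is \emph{not} $\mc{H}$-global is contained in exactly one patch $H_{i(v)}$ of $\mc{H}$. I would then set $I = \{i(v) : v \in S \cap V(\mc{H}),\ v \text{ is not } \mc{H}\text{-global}\}$ and $J = [|\mc{H}|] \setminus I$, so that $|J| \geq |\mc{H}| - |S|$, and define $\mc{H}' := \mc{H}|_J$.

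Next I would verify that $\mc{H}'$ is a patchwork embedded in $G$, checking (E1)--(E4). Conditions (E1) and (E2) are immediate from the corresponding conditions for $\mc{H}$, since we restricted to a subfamily. Condition (E3) is also inherited, since $V(\mc{H}') \subseteq V(\mc{H})$ and $J \subseteq [|\mc{H}|]$. The only condition needing care is (E4). Given $v \in V(G) - V(\mc{H}')$, there are two cases. If $v \in V(\mc{H}) - V(\mc{H}')$, then $v$ is a non-global vertex lying in a unique patch $H_{i_0}$ with $i_0 \in I$; by (E3) applied to $\mc{H}$, $v$ has no neighbors in $V(H_j)$ for any $j \neq i_0$, so $v$ has no neighbors in $V(\mc{H}')$ at all, and (E4) is vacuous. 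If $v \notin V(\mc{H})$, then (E4) for $\mc{H}$ gives some $i \in [|\mc{H}|]$ such that all neighbors of $v$ in $V(\mc{H})$ lie in $\dlH_i$ (the $\drH_i$ case is symmetric). If $i \in J$ we are done; otherwise any neighbor $u$ of $v$ in $V(\mc{H}')$ lies in $V(H_i) \cap V(H_j)$ for some $j \in J$, so by (E2) $u$ is $\mc{H}$-global and hence belongs to $\dlH_k$ for every $k$, so any $k \in J$ witnesses (E4).

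Finally, $\mc{H}'$ is $S$-respectful: by construction every $v \in S \cap V(\mc{H}')$ is already $\mc{H}$-global, meaning $v \in \dlH_k \cap \drH_k$ for every $k \in [|\mc{H}|]$ and therefore for every $k \in J$, so $v$ is $\mc{H}'$-global. The only genuinely non-trivial step is the index-swapping argument in (E4), and it is handled precisely because (E2) forces any vertex shared between two patches to appear in the boundary of all patches.
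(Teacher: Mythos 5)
Your proposal is correct and follows the natural approach. The paper treats this observation as ``immediate from the definitions'' and gives no proof; your argument---discard the at most $|S|$ patches each containing a non-$\mathcal{H}$-global vertex of $S$, then check that (E1)--(E4) and $S$-respectfulness survive, with the only non-vacuous case of (E4) handled by invoking (E2) to re-route the witness index into $J$---is precisely the routine verification the paper leaves implicit.
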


\begin{obs}\label{o:respect2}
	Let $(A,B)$ be a separation of a graph $G$, let $S \subseteq V(G)$ be such that $S \cap A \subseteq  A \cap B$. Let $\mc{H}$ be an  $(A \cap B)$-respectful patchwork embedded in $G[A]$. Then $\mc{H}$ is an $S$-respectful patchwork embedded in $G$. 
\end{obs}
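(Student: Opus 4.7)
The plan is to verify, in turn, each of the four embedding conditions (E1)--(E4) for $\mc{H}$ viewed as a patchwork in $G$ rather than in $G[A]$, and then separately check $S$-respectfulness. The overarching observation driving everything is that the only new vertices one sees when moving from $G[A]$ to $G$ are those in $B - A$, and by definition of a separation such vertices have no neighbors in $A - B$; hence their only possible neighbors inside $V(\mc{H}) \subseteq A$ lie in the separator $A \cap B$. By the hypothesis that $\mc{H}$ is $(A \cap B)$-respectful in $G[A]$, every vertex of $V(\mc{H}) \cap (A \cap B)$ is $\mc{H}$-global, i.e. lies in $\dlH_i \cap \drH_i$ for every $i \in [l]$. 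This is the key fact that will make each verification trivial.

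First, I would check (E2) and (E3), which concern relationships between vertices of $V(\mc{H})$ only. Since $V(\mc{H}) \subseteq A$, the adjacencies inside $V(\mc{H})$ in $G$ agree with those in $G[A]$, so both conditions transfer directly from the given embedding in $G[A]$. Next, for (E1), I would check that each $H_i$ is embedded in $G$: the only new thing compared with being embedded in $G[A]$ is a vertex $v \in B - A$; any neighbor of such $v$ in $V(H_i)$ must lie in $A \cap B$ and hence be $\mc{H}$-global, so it sits in both $\dlH_i$ and $\drH_i$, and the ``all in $\dlH_i$ or all in $\drH_i$'' clause is automatically satisfied. For (E4), if $v \in V(G) - V(\mc{H})$ and $v \in A$, the required index $i$ comes from the embedding in $G[A]$; if $v \in B - A$ then exactly as above every neighbor of $v$ in $V(\mc{H})$ is $\mc{H}$-global, so any $i \in [l]$ witnesses (E4).

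Finally, for $S$-respectfulness, I would observe that $V(\mc{H}) \subseteq A$, so $S \cap V(\mc{H}) \subseteq S \cap A \subseteq A \cap B$ by hypothesis, and therefore $S \cap V(\mc{H}) \subseteq (A \cap B) \cap V(\mc{H})$; by the assumed $(A \cap B)$-respectfulness, every such vertex is $\mc{H}$-global, as required.

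There is no real obstacle here: the statement is a definition-unwinding observation whose only content is the remark that vertices of $B - A$ can see $V(\mc{H})$ only through the separator $A \cap B$, together with the fact that separator vertices in $V(\mc{H})$ have been assumed $\mc{H}$-global. The only mild subtlety to be careful about is that $\mc{H}$-globality is defined with respect to the patchwork $\mc{H}$ itself (independent of the ambient graph), so ``$\mc{H}$-global'' has the same meaning whether we regard $\mc{H}$ as embedded in $G[A]$ or in $G$; this must be noted explicitly to justify transferring the property.
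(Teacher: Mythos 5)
Your proposal is correct and matches the paper's treatment: the paper states this observation as ``immediate from the definitions'' and gives no proof, and your argument is exactly the intended definition-unwinding, with the two essential points correctly identified --- vertices of $B-A$ meet $V(\mc{H})\subseteq A$ only through $A\cap B$, whose vertices in $V(\mc{H})$ are $\mc{H}$-global and hence lie in $\partial_l H_i\cap\partial_r H_i$ for every $i$; and $\mc{H}$-globality is a property of the patchwork alone, so it transfers unchanged from $G[A]$ to $G$.
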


\begin{proof}[Proof of \cref{t:struct2}.] Let $\theta = \theta_{\ref{l:struct}}(\mc{F})$. Assume without loss of generality that $l \geq 2$ and $s \geq 2\theta$. Let $k=l+s$, and let $N = N_{\ref{l:struct}}(\mc{F},k)$. 
	
	Suppose for a contradiction that there exists a graph $G \in \mc{F}$ with  $|V(G)| \geq N$ and $S \subseteq V(G)$ with $|S| \leq s$  that do not satisfy the conclusion of the theorem. We choose such a pair $(G,S)$ with $|V(G)|$ minimum. By the choice of $\theta$ and $n$  for some $q_0 \leq \theta$
	there exists a refined $q_0$-patchwork $\mc{H}_0$  with $|\mc{H}_0| \geq k$ stitched to $G$. By \cref{o:respect1}, there exists an $S$-respectful restriction $\mc{H}'$ of $\mc{H}_0$ with $|\mc{H}'| \geq l$. By the choice of $(G,S)$, there exists $H' \in \mc{H}'$ such that $| V(H')| > N$. Let $A = V(H')$ and $B=(V(G)-V(H')) \cup \dH$. Then $(A,B)$ is a separation of $G$, as $H$ is embedded in $G$. Let $S'=A \cap B$. By the choice of $G$ for some $q \leq \theta$ there exists an $S'$-respectful refined  $q$-patchwork $\mc{H}$ stitched to $G[A]$  such that $|V(H)| \leq N$ for every $H \in \mc{H}$. By \cref{o:respect2}, the patchwork $\mc{H}$ is an $S$-respectful patchwork embedded  in $G$. Thus $(G,S)$ satisfies the conclusion of the theorem, a contradiction.
\end{proof}

\section{Proof of \cref{t:large}}\label{s:large}

In this section we prove \cref{t:large} thus completing the proof of \cref{t:struct2} and, in turn, the proofs of Theorems~\ref{t:rational}--\ref{t:periodic}.

Our proof relies on a technical version of the Flat Wall theorem. We start by introducing the definitions necessary to state and subsequently use this result.

The infinite graph $\bb{W}$  has vertex set $\bb{Z}^2$ and an edge between any vertices $(x,y)$ and $(x',y')$ if either \begin{itemize}
	\item $|x-x'|=1$ and $y=y'$, or 
	\item  $x=x'$, $|y-y'|=1$ and $x$ and $\max\{y,y'\}$ have the same parity,
\end{itemize} 
Let $h,l \geq 1$ and $x_0,y_0$ be integers, and let $X=[x_0+1,x_0+2l], Y=[y_0+1,y_0+h]$.   An \emph{$(l,h)$-wall} (or, simply a \emph{wall}) $W=W(X,Y)$ is obtained from the subgraph of $\bb{W}$ induced by $X \times Y$ 
by deleting the vertices of degree one.  Note that $W$ is a subgraph of an $(2l,h)$-grid and the maximum degree of $W$ is three. 
The \emph{outer cycle} of $W$ is the unique cycle $C \subseteq W$ such that $W$ is contained within the disc bounded by $C$ in the  straight line drawing of $\bb{W}$ in the plane. 

The outer cycle $C$ of $W$ can be naturally partitioned into four paths, bottom, top, left and right, meeting at four corners of $W$. Formally, we define these as follows. The path in $C$ induced in it by the vertices with the $y$-coordinate $y_0+1$ (resp. $y_0+h$) is the \emph{bottom} (resp.  \emph{top}) of $W$. We denote these paths by $B(W)$ (resp. $T(W)$). Let $c_{--}(W)$ and $c_{+-}(W)$ denote the ends of $B(W)$, where $c_{--}(W)$ has the smaller $x$-coordinate, and let $c_{-+}(W)$ and  $c_{++}(W)$ denote the ends of $T(W)$ in the same order. We say that $c_{--}(W),c_{-+}(W),c_{+-}(W)$ and $c_{++}(W)$  are  the \emph{corners} of $W$, and that $\{c_{--}(W),c_{++}(W)\}$ and $\{c_{-+}(W),c_{+-}(W)\}$ are the pairs of \emph{opposite corners of $W$}.
The cycle $C$  contains a unique path with ends  $c_{--}(W)$ and $c_{-+}(W)$ (resp.  $c_{+-}(W)$ and $c_{++}(W)$), internally disjoint from the top and bottom, which we call the \emph{left}  (resp. the  \emph{right}) of $W$, and denote by $L(W)$ (resp. $R(W)$).

In this section we study subdivisions of walls appearing as subgraphs within larger graphs. The next definition of embedding helps us track the correspondence between the vertices original wall and the vertices of the subdivision. We also extensively use this definition in our discussion of topological minors in Section~\ref{s:top}.

An \emph{embedding} $\eta$ of a graph $H$ into a graph $G$ is a map with the domain $V(H) \cup E(H)$ such that \begin{itemize}
	\item $\eta(v) \in V(G)$ for every $v \in V(H)$, and $\eta(u)\neq \eta(v)$ for $u \neq v$; \item $\eta(uv)$ is a path in $G$ with ends $\eta(u)$ and $\eta(v)$ for every $uv \in E(H)$, and the paths corresponding to distinct edges are internally vertex disjoint.
\end{itemize}

For a subgraph $H' \subseteq H$ we denote by $\eta(H')$ the subgraph of $G$ with the vertex set $\eta(V(H')) \cup (\cup_{e \in  E(H')}V(\eta(e))$ and edge set $\cup_{e \in E(H')}E(\eta(e))$. (Most often we use this notation when $H'=H$.)  Note that $\eta(H)$ is a subdivision of $H$. Thus there exists an embedding of $H$ into $G$ if and only if $G$ contains a subgraph isomorphic to a subdivision of $H$, which in turn is equivalent to $H$ being a topological minor of $G$.  If $H$  maximum degree at most three, e.g. $H$ is a wall, then this is in turn equivalent to $H$ being a minor of $G$, implying the following well known observation.

\begin{obs}\label{o:embedding}
	Let $H$ and $G$ be graphs such that  the maximum degree of $H$ is at most three. Then $H \leq G$ if and only if there exists an embedding of $H$ into $G$.
\end{obs}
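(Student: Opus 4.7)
The observation is a standard fact that for graphs of maximum degree at most three, the minor relation coincides with the topological minor relation. I would prove it in the two directions separately.

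For the easier direction, assume an embedding $\eta$ of $H$ into $G$ exists. Then $\eta(H)$ is a subgraph of $G$ isomorphic to a subdivision of $H$. Contracting each path $\eta(uv)$ into a single edge (while keeping the vertices $\eta(v)$ for $v \in V(H)$ distinct) produces $H$ as a minor of $G$, so $H \leq G$. This direction does not use the degree hypothesis.

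For the nontrivial direction, suppose $H \leq G$ via a model $\alpha$ of $H$ in $G$. I would build the embedding $\eta$ separately on each branch set. Fix $v \in V(H)$ and let $u_1,\ldots,u_d$ be its neighbors in $H$, where $d \leq 3$. By definition of a model there is, for each $i$, an edge $e_i \in E(G)$ with one end $x_i \in \alpha(v)$ and the other $y_i \in \alpha(u_i)$, and we may pick these so that the $x_i$ are distinct if we wish (though even if they coincide the argument goes through). Since $G[\alpha(v)]$ is connected, it contains a Steiner tree $T_v$ spanning $\{x_1,\ldots,x_d\}$. Because $d \leq 3$, any such minimal Steiner tree has the shape of a subdivided star with at most three branches: there is a (possibly trivial) central vertex $c_v$ and pairwise internally disjoint paths $Q_1^v,\ldots,Q_d^v$ in $T_v$ from $c_v$ to $x_1,\ldots,x_d$ respectively. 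I set $\eta(v):=c_v$.

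Having chosen $\eta(v)$ for every $v$, I define the edge images by $\eta(u_iv):=Q_i^v \cup e_i \cup Q_j^{u_i}$, where $j$ is the index in the star at $u_i$ corresponding to the neighbor $v$. The paths $Q_1^v,\ldots,Q_d^v$ inside any single branch set $\alpha(v)$ share only the endpoint $c_v = \eta(v)$, so as $uv$ varies over the edges of $H$ the paths $\eta(uv)$ are internally vertex-disjoint and meet in $G$ only at the chosen vertices $\eta(v)$; moreover the $\eta(v)$ are distinct because the branch sets $\alpha(v)$ are disjoint. Thus $\eta$ satisfies both conditions in the definition of an embedding.

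The only real subtlety is ensuring that the three paths $Q_i^v$ inside one branch set can be chosen internally disjoint, which is exactly why the bound $d \leq 3$ is needed: a Steiner tree on three terminals is always a three-legged star (with possibly subdivided legs), so a natural center vertex always exists. For $d \leq 2$ the claim is even easier (take a single path, with $\eta(v)$ an endpoint or internal vertex), and for $d = 0$ any vertex of $\alpha(v)$ works. This completes the construction and hence the proof.
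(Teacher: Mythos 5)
Your proof is correct and spells out the standard argument that the paper leaves implicit: the paper states the observation as a well-known consequence of the equivalence between minors and topological minors for subcubic graphs without writing out a proof, and your Steiner-tree/subdivided-star construction is exactly the usual way to prove that equivalence. One small exposition point worth fixing: the edges $e_i$ should be chosen once per edge of $H$ (globally) rather than per vertex, so that the terminal $y_i \in \alpha(u_i)$ you use when forming $\eta(u_i v)$ is guaranteed to be one of the terminals of the Steiner tree you built inside $\alpha(u_i)$; as written, the two local choices are not visibly coordinated, though the intent is clear and the fix is trivial.
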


Let $\eta$ be an embedding of a  wall $W$ into  $G$. We refer to the images of top, bottom, left, right, corners and outer cycle of $W$ as  \emph{top, bottom, left, right, corners and outer cycle} of $\eta$. We use $T(\eta)$ to denote the top of $\eta$, and we similarly transfer the rest of notation, i.e. $c_{--}(\eta)$ denotes the corner of $\eta$, which is the common end of the left and the bottom of $\eta$. 
We use $\partial\eta$ to denote the vertex set of the outer cycle of $\eta$. 

Let $D$ be a cycle in $\eta(W)$, and let $D'$ be a cycle in $W$ such that $\eta(D')=D$. Let $X'$ be the set of vertices of $W$ which belong to the disk bounded by $D'$, and let $X$ be the set of all the vertices of $G$ which share a component of $G \setminus V(D)$ with a vertex of  $\eta(X')$. 
We refer to $G[V(D) \cup X]$ as the \emph{compass of $(\eta,D)$} an denote it by $\uparrow(\eta,D)$. Informally, in the circumstances when this notation is used, $\uparrow(\eta,D)$ corresponds to the subgraph of $G$ located inside $D$, where $\eta$ is used to distinguish inside and the outside.
The \emph{compass of $\eta$}, denoted by $\uparrow \eta$, is the compass of $(\eta,C)$, where $C$ is the outer cycle of $\eta$.

An \emph{$\eta$-cross} is a pair of vertex disjoint paths $\{P_1,P_2\}$ in  $\uparrow \eta$ such that  for  $i=1,2$ the path $P_i$ has a pair of opposite corners of $\eta$ as its ends. We say that $\eta$ is \emph{crossless} if there is no $\eta$-cross in $G$. See Figure~\ref{f:wall} for an illustration of the definitions introduced so far in this section.

\begin{figure}
	\ctikzfig{Wall2}	
	\caption{An embedding $\eta$ of a $3 \times 6$-wall with the outer cycle, indicated in red, and an $\eta$-cross, indicated in blue. }
	\label{f:wall}
\end{figure}

The Flat Wall theorem, originally proved by Robertson and Seymour~\cite{GM13}, informally states that if $G$ does not contain a fixed complete graph as a minor, then for any embedding $\eta$ of a sufficiently large wall $W$ into $G$ there exists a wall $W' \subseteq W$ of any prescribed size such that the restriction of $\eta$ to $W'$ is a crossless embedding in to $G \setminus X$ for some $X \subseteq V(G)$ of bounded size. The main external tool in our proof of \cref{t:large}  is the following technical asymmetric version of this theorem, which is implicit in a recent new proof of the Flat Wall theorem due to Chuzhoy~\cite{Chuzhoy14}.

\begin{thm}[\protect{\cite[Section 7]{Chuzhoy14}}]\label{t:flat}
	For all $t,k,h$ there exists $N=N_{\ref{t:flat}}(t,k,h)$ satisfying the following. Let $G$ be a graph such that $K_t \not \leq G$, and let $\eta$ be an embedding of the wall $W([N],[h+4t])$ into $G$. Then there exists $A \subseteq V(G)$ with $|A| \leq t-5$ and  integers $a_1,a_2,\ldots,a_k \in [N-h]$ such that $|a_i-a_j| > 2h$ for all $i \neq j$ and the restriction of $\eta$ to $W([a_i+1, a_i+2h],[2t+1,2t+h])$ is a crossless embedding of it into $G \setminus A$.
\end{thm}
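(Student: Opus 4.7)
\textbf{Plan for the proof of Theorem \ref{t:large}.}

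\emph{Setup.} The plan is to convert the grid minor into a wall embedding, apply the asymmetric Flat Wall theorem (\cref{t:flat}) to extract many crossless sub-walls sharing a small apex set $A$, and then promote each sub-wall to a $q$-patch whose non-global boundary consists of three wall rows and whose global boundary consists of the apex vertices in $A$. Set $h_0 = 3$ and $s_{\ref{t:large}}(t) = h_0 + 4t = 4t+3$. Since an $(n,s)$-grid contains the wall $W([N],[s])$ as a subgraph for $N = \lfloor n/2 \rfloor$, and walls have maximum degree three, \cref{o:embedding} gives an embedding $\eta$ of $W([N], [h_0 + 4t])$ into $G$. Choosing $n_{\ref{t:large}}(t,l)$ large enough that $N \geq N_{\ref{t:flat}}(t, l+1, h_0)$, apply \cref{t:flat} to obtain an apex set $A$ with $|A|\leq t-5$ and indices $a_1 < \cdots < a_{l+1}$ such that each restriction $\eta_i$ of $\eta$ to $W_i := W([a_i+1, a_i+2h_0],[2t+1, 2t+h_0])$ is a crossless embedding into $G \setminus A$, and consecutive $W_i$'s are separated by at least $2h_0$ wall-columns.

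\emph{Patch construction.} Set $q = |A| + h_0 \leq t-2$. Fix an enumeration $A = \{v_1, \ldots, v_{|A|}\}$ and the three wall rows $y = 2t+1, 2t+2, 2t+3$. For each $i \in [l]$, define $H_i$ with underlying graph $\nm{H_i} = G[V(\uparrow \eta_i) \cup A]$, where $\uparrow \eta_i$ is the compass of $\eta_i$ in $G\setminus A$. Set $a_{H_i}(j) = b_{H_i}(j) = v_j$ for $j \in [|A|]$, making the apex vertices global. For $j \in [|A|+1, q]$, let $a_{H_i}(j)$ (resp.\ $b_{H_i}(j)$) be the image under $\eta$ of the leftmost (resp.\ rightmost) vertex of row $y = 2t + (j - |A|)$ inside $W_i$. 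Each $H_i$ is refined: non-degeneracy follows because $\eta_i(W_i)$ contributes many non-boundary vertices, and linkedness is witnessed by the three horizontal paths of $\eta_i$ traversing rows $2t+1,2t+2,2t+3$ together with the trivial one-vertex ``paths'' at each $v_j$.

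\emph{Embedding and stitching.} To see that $\mc{H} = (H_1,\ldots,H_l)$ is embedded in $G$ I verify (E1)--(E4). Condition (E1) is built into the definition of $H_i$. For (E2) the $\mc{H}$-global vertices are exactly the elements of $A$: any non-apex vertex shared by two compasses $\uparrow\eta_i$ and $\uparrow\eta_j$ would lie in a common component of $(G \setminus A) \setminus V(\partial\eta_i)$ with both the interior of $\eta_i$ and the interior of $\eta_j$, which together with the crossless property and the horizontal separation of the sub-walls in $\eta$ can be excluded. Condition (E3) is immediate from the definition of compass: if $u \in V(\mc{H})$ has a neighbour in $V(H_i)\setminus A$ then $u$ lies in the same component of $(G\setminus A)\setminus V(\partial\eta_i)$ as the interior of $\eta_i$, hence $u \in V(\uparrow\eta_i) \subseteq V(H_i)$. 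For (E4) any vertex $v \in V(G)\setminus V(\mc{H})$ lies outside every compass, so its non-apex neighbours lie on the outer cycles $\partial\eta_i$; crossless-ness forces all such neighbours of $v$ in $V(\mc{H})$ to lie on a single side of $\partial\eta_i$ for exactly one $i$, which by the wall structure corresponds to either $\dlH_i$ or $\drH_i$ (apex neighbours are simultaneously on both sides of every patch). Finally, for stitching, for each $i \in [l-1]$ and each of the three chosen rows, the subpath of the corresponding horizontal path of the big wall $\eta$ between the right side of $W_i$ and the left side of $W_{i+1}$ gives the stitching path $P_{i,j}$; the apex coordinates require no stitching since $b_{H_i}(j) = v_j = a_{H_{i+1}}(j)$.

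\emph{Anticipated obstacle.} The delicate point is the disjointness assertion used in (E2) and (E4): a priori the compasses $\uparrow\eta_i$ in $G\setminus A$ could share vertices that lie ``off'' the big wall and are not in $A$, because a single path in $G\setminus A$ from the interior of $\eta_i$ to the interior of $\eta_j$ avoiding $V(\partial\eta_i) \cup V(\partial\eta_j)$ would place that path in both compasses. Ruling this out rigorously is where the hypothesis $K_t \not\leq G$ combines with crossless-ness and the horizontal separation guaranteed by \cref{t:flat}: if two distant crossless sub-walls were linked by many internally disjoint paths outside their outer cycles, one could build a $K_t$-minor by routing through the wall. Making this argument precise, possibly by shrinking each $H_i$ to an even smaller interior sub-wall and using the annular region between it and the outer cycle of $\eta_i$ as a protective buffer, is the main technical step; everything else is bookkeeping around the patchwork formalism set up in \cref{s:patch}.
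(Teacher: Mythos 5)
Your proposal does not prove the statement in question. The statement is \cref{t:flat}, the asymmetric Flat Wall theorem; your text is explicitly a plan for \cref{t:large}, and its very first step is to ``apply the asymmetric Flat Wall theorem (\cref{t:flat})'' as a black box. As a proof of \cref{t:flat} this is circular, and no argument for \cref{t:flat} itself appears anywhere in the proposal. Note that in the paper \cref{t:flat} is not proved either: it is an external result, extracted from Chuzhoy's new proof of the Robertson--Seymour Flat Wall theorem and cited as such. Any genuine proof would have to engage with that machinery (finding the apex set $A$ of size at most $t-5$, locating $k$ horizontally well-separated subwalls, and certifying crosslessness of each), none of which is attempted here.

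Even read charitably as a proof sketch of \cref{t:large}, the construction has a real gap where you yourself flag only ``bookkeeping.'' You take $\nm{H_i} = G[V(\uparrow\eta_i) \cup A]$ but designate as boundary only six wall vertices (the ends of three rows) plus $A$. The remaining vertices of the outer cycle $\partial\eta_i$ lie in $V(H_i) - \dd{H_i}$ yet have neighbours outside $V(H_i)$ (the vertical subdivision paths of the big wall leaving $W_i$ at its top and bottom), so the pair $(V(H_i), V(G)-(V(H_i)-\dd{H_i}))$ is not a separation and (E1) fails. This is exactly the difficulty the paper's proof of \cref{t:large} is built to avoid: it does not use the whole compass as a patch, but instead invokes \cref{l:diamond} to find a small ``diamond'' patch deep inside each crossless wall, with a controlled boundary of size at most $3$, and then uses \cref{l:separate}, the filter lemma (\cref{l:filter}) and the linkage of (DS3) to route the stitching paths out to the corners and pegs. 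If you intend to pursue your row-boundary variant, you would need to either enlarge the boundary to all of $\partial\eta_i$ (which changes $q$ and breaks the bound $q \le t-2$) or prove a substitute for \cref{l:diamond}.
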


Let $X,Y,Z \subseteq V(G)$ be pairwise disjoint. We say that $Y$ \emph{separates $X$ and $Z$} if $X$ and $Z$ belong to vertex sets of different components of $G \setminus Y$, i.e. $\kappa_{G \setminus Y}(X,Z)=0$. In addition to \cref{t:flat} we need the following  lemma from ~\cite{Chuzhoy14}.  

\begin{lem}[\protect{\cite[Theorem 2.9]{Chuzhoy14}}]\label{l:separate}
	Let $\eta$ be a crossless embedding of a wall $W$ into a graph $G$. Let $D$ be a cycle in $\eta(W)$ such that $V(D) \cap \partial \eta = \emptyset$. Then $V(D)$ separates $V(\uparrow(\eta,D)) - V(D)$ from $\partial \eta$.
\end{lem}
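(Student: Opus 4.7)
The plan is to argue by contradiction. Suppose there is a path $Q$ in $G \setminus V(D)$ from some $u \in V(\uparrow(\eta,D)) \setminus V(D)$ to some $v \in \partial\eta$; I will use $Q$ together with the wall structure to produce an $\eta$-cross, contradicting the hypothesis that $\eta$ is crossless.

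First, unwind the definition of the compass. By the definition of $\uparrow(\eta,D)$, the vertex $u$ lies in the component of $G \setminus V(D)$ containing $\eta(x')$ for some $x' \in X'$, and because $Q$ avoids $V(D)=\eta(V(D'))$ we must in fact have $x' \in X' \setminus V(D')$, i.e., $x'$ lies strictly inside the disk bounded by $D'$ in the planar drawing of $W$. Concatenating $Q$ with a path from $u$ to $\eta(x')$ inside that component yields a path $Q^*$ in $G \setminus V(D)$ from $\eta(x')$ to $v$. After shortening and rerouting, one may further assume that $Q^* \subseteq \uparrow\eta$, that the interior of $Q^*$ is disjoint from $\eta(W)$, and (by extending along $\partial\eta$ if necessary) that $v = \eta(w)$ for some branch vertex $w \in V(\partial W)$. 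Thus $Q^*$ is a genuine ``handle'' joining the interior of the disk bounded by $D'$ to $\partial\eta$.

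The heart of the argument is then topological. In the planar drawing of $W$, the cycle $D'$ separates $x'$ from $\partial W$, so every $W$-path from $x'$ to $\partial W$ meets $V(D')$; pulled through $\eta$, every path in $\eta(W)$ from $\eta(x')$ to $\partial\eta$ meets $V(D)$. The handle $Q^*$ violates this, and so plays the role of a ``crossing'' of $D'$ not present in $\eta(W)$. Exploiting it, I would construct two vertex-disjoint paths in $\uparrow\eta$, one with ends $c_{--}(\eta),c_{++}(\eta)$ and the other with ends $c_{-+}(\eta),c_{+-}(\eta)$, by combining: (i) disjoint $W$-paths from $x'$ to two distinct vertices of $D'$ inside the disk; (ii) disjoint $W$-paths outside the disk from $V(D')$ to the four corners of $\partial W$, chosen so that one opposite pair of corners is ultimately reached via $Q^*$ and the other pair along segments of $\partial W$; and (iii) the handle $Q^*$ itself. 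Patching these together produces an $\eta$-cross, the desired contradiction.

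The main obstacle is carrying out the routing in the last step cleanly, since the attachment vertex $v$ can land on any of the four sides of $\partial\eta$, the cycle $D$ can sit almost anywhere inside the wall, and the two constructed paths must be globally vertex-disjoint. I would handle this by first using the symmetries between the four sides of $\partial\eta$ to reduce to the case $v \in B(\eta)$ away from the corners, and then exploiting the grid redundancy in the annular region of $W$ between $D'$ and $\partial W$ to supply all the disjoint linkages needed. A possibly-cleaner alternative is to contract $Q^*$ to a single chord of $\eta(W)$ and observe that the wall with this chord already contains a topological $K_4$-minor linking the four corners of $\eta$ in the ``crossed'' configuration, from which the $\eta$-cross can be read off directly.
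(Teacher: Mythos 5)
The paper does not prove \cref{l:separate} at all --- it is quoted verbatim from Chuzhoy's new proof of the Flat Wall theorem (cited as \cite[Theorem 2.9]{Chuzhoy14}) and used as a black box, so there is no argument of the paper's own to compare yours against; you are attempting to reprove an imported result from scratch.

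Your overall strategy (turn a hypothetical $G\setminus V(D)$ path from the interior of the compass of $D$ to $\partial\eta$ into an $\eta$-cross) is the natural direction, and your preliminary reductions --- that $x'$ lies strictly inside the disk bounded by $D'$, and that the handle $Q^*$ can be taken to lie in $\uparrow\eta$ with interior avoiding $\eta(W)$ --- are sound. The gap is exactly where you flag it, in the routing step (i)--(iii), and it is genuine. Concretely: the two cross-paths must be globally vertex-disjoint, but your first path must finish with a stretch of $\partial\eta$ from $v$ to one corner, while the second path must also reach two of the remaining corners which lie on $\partial\eta$; nothing in the sketch shows these arcs along $\partial\eta$ and the interior routes through the wall can all be made disjoint simultaneously. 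You also appeal to ``the grid redundancy in the annular region of $W$ between $D'$ and $\partial W$,'' but the hypotheses allow $D$ to be the cycle of $\eta(W)$ immediately inside $\partial\eta$, so there may be essentially no annulus and no redundancy to exploit. The symmetry reduction to $v\in B(\eta)$ is likewise not free, since the four sides of a wall are not interchangeable (left and right are subdivided; top and bottom are not). And your alternative --- contract $Q^*$ to a chord and read a cross off a crossed $K_4$ --- restates the same unproved claim: one must still show that a wall plus an arbitrary interior-to-boundary chord admits this specific $4$-terminal linkage, which does not follow by inspection. Making any of these steps rigorous is essentially the content of Chuzhoy's Theorem 2.9, which rests on the two-disjoint-paths characterization of cross-free graphs (Seymour, Shiloach, Thomassen); without invoking that machinery the direct routing argument does not close.
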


We say that a wall $W'$ is a \emph{proper subwall} of the wall $W$ if $W' \subseteq W$ and $W'$ is disjoint from the outer cycle of $W$.  The last auxiliary result about crossless embeddings of walls that we need is the following. 
  
\begin{lem}\label{l:crossless}
 	Let $\eta$ be a crossless embedding of a wall $W$ into a graph $G$. Then the restriction of $\eta$ to any  proper subwall of $W$ is crossless.
 \end{lem}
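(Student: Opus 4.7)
The plan is to proceed by contradiction: assuming the restriction $\eta'$ of $\eta$ to some proper subwall $W'$ of $W$ admits an $\eta'$-cross $\{P_1, P_2\}$, we extend it to an $\eta$-cross, contradicting the hypothesis that $\eta$ is crossless. Write $C'$ for the outer cycle of $W'$ and set $D = \eta(C')$, so that $D$ is the outer cycle of $\eta'$. Because $W'$ is a proper subwall, $V(C') \cap V(C) = \emptyset$, where $C$ is the outer cycle of $W$; in particular $V(D) \cap \partial\eta = \emptyset$. Moreover, the vertices of $W$ strictly inside $C'$ are precisely the interior vertices of $W'$, so one checks directly from the definitions that $\uparrow(\eta,D) = \uparrow\eta'$.

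Applying \cref{l:separate} to the crossless embedding $\eta$ and the cycle $D$ then yields that $V(D)$ separates $V(\uparrow\eta') \setminus V(D)$ from $\partial\eta$ in $G$. This separation is the main engine of the proof: it will force any path from a corner of $\eta$ into $\uparrow\eta'$ to enter through $V(D)$, while confining $P_1, P_2$ to the ``inside'' of $D$.

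The most substantive step is to build connector paths inside the wall itself. I would find in $W$ four pairwise vertex-disjoint paths $Q_\sigma$, one for each $\sigma \in \{--, ++, -+, +-\}$, with $Q_\sigma$ running from $c_\sigma(W)$ to $c_\sigma(W')$ through a distinct ``quadrant'' of the annular subgraph $W[V(W) \setminus (V(W') \setminus V(C'))]$, chosen so that $V(Q_\sigma) \cap V(C') = \{c_\sigma(W')\}$ and $E(Q_\sigma) \cap E(C') = \emptyset$. This is the only place where genuine combinatorial work on the wall is needed; it is conceptually unexciting but should follow from an explicit construction along the rows and columns of $W$ that surround $W'$. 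Taking images, the paths $\eta(Q_\sigma)$ are pairwise vertex-disjoint in $\eta(W)$, and each meets $V(D)$ precisely at $c_\sigma(\eta')$.

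To conclude, observe that $\eta(Q_\sigma)$ with the endpoint $c_\sigma(\eta')$ removed is a connected subgraph of $G \setminus V(D)$ that meets $\partial\eta$ at $c_\sigma(\eta)$, so by the separation above it is disjoint from $V(\uparrow\eta') \setminus V(D)$. Hence $V(\eta(Q_\sigma)) \cap V(\uparrow\eta') = \{c_\sigma(\eta')\}$. Since $V(P_1) \cup V(P_2) \subseteq V(\uparrow\eta')$ and $P_1, P_2$ are vertex-disjoint (so that in particular neither $P_i$ contains a corner that is an endpoint of the other), the concatenations
\[
P_1' = \eta(Q_{--}) \cup P_1 \cup \eta(Q_{++}), \qquad P_2' = \eta(Q_{-+}) \cup P_2 \cup \eta(Q_{+-})
\]
are simple, vertex-disjoint paths whose endpoints are the required opposite pairs of corners of $\eta$. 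A short argument using the same separation shows $V(\uparrow\eta') \subseteq V(\uparrow\eta)$, which combined with $\eta(W) \subseteq \uparrow\eta$ gives $P_1', P_2' \subseteq \uparrow\eta$. Thus $\{P_1', P_2'\}$ is an $\eta$-cross, the desired contradiction. The only real obstacle is the wall-combinatorial construction of the $Q_\sigma$; everything else is a direct unpacking of the definitions together with \cref{l:separate}.
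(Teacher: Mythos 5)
Your proof is correct and follows essentially the same strategy as the paper: build a linkage of four paths in $W$ from the corners of $W'$ to the corresponding corners of $W$ (internally disjoint from $W'$), apply \cref{l:separate} to keep the images of those paths away from the interior of $\uparrow\eta'$, and concatenate with $\{P_1,P_2\}$ to produce an $\eta$-cross. The paper compresses the bookkeeping (it leaves the linkage construction and the identification $\uparrow(\eta,D)=\uparrow\eta'$ implicit, and its citation reads ``\cref{l:crossless}'' where \cref{l:separate} is clearly intended), but the argument is the same.
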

 
\begin{proof}
	Let $W'$ be a proper subwall of $W$. It is easy to check that there exists a linkage $\mc{P}$ of size $4$ in $W$ such that the paths in $\mc{P}$ are internally disjoint from $V(W')$ and each path of $P$ joins a corner of $W'$ to the corresponding
	corner of $W$.
	
	Let $\eta'$ denote the restriction of $\eta$ to $W'$. Suppose for a contradiction that there exists an $\eta'$-cross $\{P_1,P_2\}$ in $G$.  Let $\mc{Q}= \{\eta(P)\}_{P \in \mc{P}}$ be the image of the linkage $\mc{P}$. By \cref{l:crossless}, $\partial \eta'$ separates $(V(P_1) \cup V(P_2)) - \partial \eta'$ from $\partial \eta$. It follows that the paths in $\mc{Q}$ are disjoint from $P_1$ and $P_2$ except for the corners of $\eta'$, which they share as ends. Thus we can use $\mc{Q}$ to extend $\{P_1,P_2\}$  to an $\eta$-cross, a contradiction. 
\end{proof}

The proof of \cref{t:large} occupying the rest of the section can be informally outlined as follows.
We find the patchwork $\mc{H}$ satisfying \cref{t:large} by finding a single well-behaved patch embedded deep within each of the crosslessly embedded walls guaranteed by \cref{t:flat} and use the rest of the embedding to stitch the patches together. 
By choosing initial patches far from the boundary of the walls, we are able to link the boundary of the patch to the corners of the corresponding wall, utilizing the cycles in the  wall that separate these two boundaries to route the linkage we are producing according to our needs.

To formalize this last part of the argument, 
we say that a collection of pairwise vertex disjoint cycles $\mc{C}$ in $G$ is an \emph{$(X,Z,|\mc{C}|)$-filter (in $G$)} if $V(C)$ separates $X$ and $Z$ for each $C \in \mc{C}$.

\begin{lem}\label{l:filter}
	Let  $G$ be a graph, let $X',X,Z \subseteq V(G)$ be such that $X' \subseteq X$.  
	Let $(A,B)$ be a separation $G$ such that $Z \cap A \neq \emptyset,$ $X \subseteq B$, 
	and $G[A]$ is connected.  Let $r \geq 1$ be an integer such that $\kappa_G(X',Z) \geq r$, and there exists an $(X,Z,r)$-filter in $G$. \begin{itemize}
		\item[(i)] If $|A \cap B| < r$ then there exists a cycle $C$ in $G$ such that $V(C)$ separates $A$ and $X$, in particular $X \cap A = \emptyset$,
		\item[(ii)] If $\kappa_G(A \cap B,X)  \leq r$ then $\kappa_G(A \cap B,X') = \kappa_G(A \cap B,X)$.
		\end{itemize}
\end{lem}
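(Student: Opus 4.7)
The plan is to prove the two parts separately, both via Menger-style arguments applied to the filter $\mc{C}=\{C_1,\ldots,C_r\}$. For part (i), I will exhibit a single cycle $C\in\mc{C}$ with $V(C)\cap A=\emptyset$, which will then witness the desired separation. Because the cycles of $\mc{C}$ are pairwise disjoint and $|A\cap B|<r$, pigeonhole yields some $C\in\mc{C}$ with $V(C)\cap(A\cap B)=\emptyset$. Since $C$ is connected and $(A,B)$ is a separation, $V(C)$ must lie in either $A\setminus B$ or $B\setminus A$. The first alternative is ruled out by Menger: any $(X',Z)$-linkage of size $r$ in $G$ (which exists by $\kappa_G(X',Z)\geq r$) has each of its $r$ pairwise vertex-disjoint paths meet $V(C)\subseteq A\setminus B$, forcing each path to enter $A$ through a distinct vertex of $A\cap B$ and contradicting $|A\cap B|<r$. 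Hence $V(C)\subseteq B\setminus A$, so $V(C)\cap A=\emptyset$. Because $G[A]$ is connected and $Z\cap A\neq\emptyset$, $A$ lies in the single component of $G\setminus V(C)$ containing some $z\in Z\cap A$; since $C$ separates $X$ from $Z$, this component is disjoint from $X$, so $V(C)$ separates $A$ from $X$.

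For part (ii), $\kappa_G(A\cap B,X')\leq\kappa_G(A\cap B,X)$ is immediate from $X'\subseteq X$, so the content is the reverse inequality. Set $k=\kappa_G(A\cap B,X)\leq r$ and suppose for contradiction that some $(A\cap B,X')$-separator $S'$ satisfies $|S'|<k\leq r$. In $G\setminus S'$ there then exist a surviving $(A\cap B,X)$-path $R$ (whose $X$-endpoint $x$ must lie in $X\setminus X'$), a surviving $(X',Z)$-path $Q$, and a filter cycle $C$ disjoint from $S'$; the cycle $C$ continues to separate $X$ from $Z$ in $G\setminus S'$, so $V(Q)\cap V(C)\neq\emptyset$. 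The plan is to show that $C$ may be chosen so that also $V(R)\cap V(C)\neq\emptyset$; then concatenating a subpath of $R$, an arc of $C$, and a subpath of $Q$ produces an $(A\cap B,X')$-walk in $G\setminus S'$, contradicting the choice of $S'$.

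To obtain such a $C$, I plan to reduce to part (i). Let $a$ be the $A\cap B$-endpoint of $R$, let $K$ be the component of $a$ in $G\setminus S'$, and set $A^*=K\cup N_G(K)$ and $B^*=V(G)\setminus K$; since $K$ is a component of $G\setminus S'$, we have $N_G(K)\subseteq S'$, so $(A^*,B^*)$ is a separation with $A^*\cap B^*=N_G(K)$, $|A^*\cap B^*|\leq|S'|<r$, $G[A^*]$ connected, and $X'\subseteq B^*$. Once $Z\cap A^*\neq\emptyset$ is established, part (i) applied to $(A^*,B^*)$ with terminals $(X',Z)$ produces a cycle $C'\in\mc{C}$ with $V(C')\cap A^*=\emptyset$; and by strengthening the pigeonhole step to choose $C'$ among the $\geq r-|S'|$ filter cycles avoiding all of $S'$, I can also arrange $V(C')\cap S'=\emptyset$. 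The argument of part (i) then places $A^*$, and in particular $a$, on the $Z$-side of $C'$ in $G$, while $x\in X$ lies on the opposite side, so $R$ (a path in $G\setminus S'$ from $a$ to $x$) must cross $V(C')$, as required.

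The main obstacle is verifying $Z\cap A^*\neq\emptyset$, which amounts to choosing $a\in A\cap B$ whose component $K$ in $G\setminus S'$ meets $Z$. My plan for the failure case is as follows: if no such $a$ exists then $S'$ separates $A\cap B$ from $Z$ in $G$, so $|S'|\geq\kappa_G(A\cap B,Z)$; using $G[A]$ connectedness together with the $(X',Z)$-linkage in $G$, whose paths terminating inside $A$ must enter $A$ through distinct vertices of $A\cap B$, and combining with the $(A\cap B,X)$-linkage of size $k$, I aim to show $\kappa_G(A\cap B,Z)\geq k$, yielding the contradiction $|S'|\geq k$ with the choice of $S'$.
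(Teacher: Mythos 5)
Your proof of part (i) is correct and essentially identical to the paper's argument.

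For part (ii) you take a genuinely different route. The paper instead takes a putative separation $(A',B')$ with $A\cap B\subseteq A'$, $X'\subseteq B'$, $|A'\cap B'|<\kappa_G(A\cap B,X)$, picks $C\in\mc{C}$ with $V(C)\subseteq B'\setminus A'$, and derives a contradiction from $\kappa_G(A\cap B,V(C))\geq\kappa_G(A\cap B,X)>|A'\cap B'|$; this inequality is the only real work, and it follows from a short case split on whether $V(C)\subseteq A\setminus B$ or $V(C)\subseteq B\setminus A$. Your concatenation approach could in principle work, but as written it has a genuine gap. First, there is an internal error: $R$ is a path in $G\setminus S'$ starting at $a\in K$, hence $R\subseteq K\subseteq A^*$; the cycle $C'$ you obtain from part (i) satisfies $V(C')\cap A^*=\emptyset$, so $R$ cannot cross $V(C')$, contrary to what you assert. (What you actually have at that point is simpler and needs no concatenation: $x\in X\cap K$ and $z\in Z\cap K$ lie in a single component of $G\setminus V(C')$ since $V(C')\cap K=\emptyset$ and $G[K]$ is connected, and this already contradicts $C'$ being a filter cycle.)

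Second, and more seriously, the whole argument still hinges on $Z\cap K\neq\emptyset$, where $K$ is the component of the specific $A\cap B$-endpoint $a$ of $R$ in $G\setminus S'$, and your fallback does not cover the gap. The negation of ``$Z\cap K\neq\emptyset$'' is not ``for all $a'\in A\cap B$ the component of $a'$ misses $Z$'': it is possible that some other $a'\in A\cap B$ has a component meeting $Z$ while $a$ does not, and you give no way to replace $R$ by a surviving $(A\cap B,X)$-path ending at such an $a'$ (there may be none). Thus the dichotomy you set up is not exhaustive. Moreover, even in the extreme case you do address, the claim $\kappa_G(A\cap B,Z)\geq k$ is left as a sketch; I do not see how to derive it from the ingredients you list, and it does not appear to be easier than the inequality $\kappa_G(A\cap B,V(C))\geq\kappa_G(A\cap B,X)$ that the paper proves directly. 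As it stands, part (ii) is not established.
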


\begin{proof} Let $\mc{C}$ be an  $(X,Z,r)$-filter in $G$.  
	
	We prove (i) first. As $|A \cap B| < r$ there exists $C \in \mc{C}$ such that $V(C) \cap A \cap B = \emptyset$. 
	As  $\kappa(X,Z) \geq r$ it follows that $\kappa(X,V(C)) \geq r$ and so $V(C) \not \subseteq A$. Thus $V(C) \cap A = \emptyset$.
	As $G[A]$ is connected, $A$ belongs to the vertex set of a single component of $G \setminus V(C)$ and this  component intersects $Z$, and so is disjoint from $X$.  Thus $V(C)$ separates $A$ and $X$, as desired
	
	It remains to prove (ii). Let $r' =\kappa_G(A \cap B,X)$. Suppose for a contradiction that there exists a separation $(A',B')$ such that $A \cap B \subseteq A'$, $X' \subseteq B'$ and $|A' \cap B'| < r' \leq r.$ 
	As  $|\mc{C}| >  |A' \cap B'|$, there exists $C \in \mc{C}$ such that $V(C) \subseteq A'-B'$, or $V(C) \subseteq B'-A'$. As $V(C)$ separates $X'$ and $Z$ we have $\kappa_G(X',V(C)) \geq \kappa_G(X',Z) \geq r > |A' \cap B'|$. As $X' \subseteq B'$, it follows that $V(C) \not \subseteq A'$,  and so    $V(C) \subseteq B'-A'$. However, $A \cap B \subseteq A'$, and $\kappa_G(A \cap B,V(C)) \geq  \kappa_G(A \cap B,X) = r' > |A' \cap B'|$, a contradiction.
\end{proof}

We now define the patches that we will be searching for within walls. We say that a  $q$-patch $H$ is a \emph{diamond patch} if  \begin{description}
	\item(D1) $q \in [3]$,
	\item (D2) $\nm{H}$ is connected,
	\item(D3) $a_H(i) \neq b_H(i)$ if and only if $i = 1$,
    \item(D4) There exist pairwise vertex disjoint connected subgraphs $(S_1,T_1, S_2, \ldots, S_q)$ of $\nm{H}$ such that
    \begin{itemize}
    	\item $a_H(i) \in V(S_i)$ for every $i \in [q]$ and $b_H(1) \in T_1$,
    	\item There exists an edge of $H$ with one end in $V(S_1)$ and the other in $V(T_1)$
    	\item There exists an edge of $H$ with one end in $V(S_1)$ and the other in $V(S_i)$, and an edge of  $H$ with one end in $V(T_1)$ and the other in $V(S_i)$ for each $2 \leq i \leq q$. 
    \end{itemize}
\end{description}

Let $\eta$ be an embedding of  an $(l,h)$-wall $W$ into a graph $G$. The \emph{pegs} of $\eta$ are the images of vertices in the interior of $L(W)$ and $R(W)$ of degree two. Note that if $\eta$ is a restriction of an embedding $\eta$ of a wall $W'$ into $G$ such that $W$ is a proper subwall of $W'$ then the pegs of $\eta$ are incident to edges in $E(\eta(W')) - E(\eta(W))$, which is useful for extending the paths ending in these pegs outside of $\uparrow \eta$. 

\vskip 10pt
We say that a diamond patch $H$ is \emph{properly set in $\eta$}, if 
  	\begin{description}
  		\item (DS1) $H$ is embedded in $G$,
  		\item  (DS2) $V(H) \subseteq V(\uparrow \eta) - \partial \eta,$ 
  		\item (DS3) there exists an $(\partial H,\partial \eta)$-linkage $(L_1,L'_1,L_2,\ldots,L_q)$, internally disjoint from $B(\eta)$ and $T(\eta)$,  such that  \begin{itemize} \item
  		 $L_1$ has ends $a_H(1)$ and a peg on $L(\eta)$,
  			\item
  			 $L'_1$ has ends $b_H(1)$ and a peg on $R(\eta)$, 
  			\item if $q \geq 2$ then
  		$L_2$ has  $a_H(2)$ as one end and the second end on  $B(\eta)$, and
  			\item if $q = 3$ then
  			$L_3$ has  $a_H(3)$ as one end and the second end on $T(\eta)$.
  		\end{itemize}	 
  	 \end{description}	

The next lemma is the main technical step in our construction of the patchwork satisfying \cref{t:large}.
 
\begin{lem}\label{l:diamond} Let $l,h \geq 16$ be positive integers, and let $\eta$ be
	a crossless embedding of an $(l,h)$-wall $W$ into a graph $G$. Then there exists a diamond patch $H$ properly set in $\eta$.
\end{lem}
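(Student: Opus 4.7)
The plan is to construct the diamond patch by working with a small central subwall $W'$ of $W$. Since $l, h \geq 16$, we can choose $W'$ so that its outer cycle $C'$ in $W$ is surrounded by at least a few bricks of ``buffer'' of $W$ on each side. Setting $D = \eta(C')$, we have $V(D) \cap \partial \eta = \emptyset$, so by \cref{l:separate}, $V(D)$ separates $V(\uparrow(\eta, D)) - V(D)$ from $\partial \eta$, and in particular from $V(G) \setminus V(\uparrow(\eta, D))$. The compass $\uparrow(\eta, D)$ will form the bulk of $V(H)$.

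To define $H$ (aiming for $q = 3$), I would pick four vertices $a, b, c, d \in V(D)$ located at the middle of the left, right, bottom, and top of $W'$ respectively, and set $a_H(1) = a$, $b_H(1) = b$, $a_H(2) = b_H(2) = c$, $a_H(3) = b_H(3) = d$, so that $\dH = \{a, b, c, d\}$ has size four. The delicate step is the embedding condition: every vertex of $V(G) \setminus V(H)$ adjacent to $V(H)$ must attach only to $\{a, b, c, d\}$ and moreover only to $\dlH = \{a, c, d\}$ or only to $\drH = \{b, c, d\}$. To ensure the first half we enlarge $V(H)$ beyond $V(\uparrow(\eta, D))$ by absorbing, for each $u \in V(D) \setminus \{a, b, c, d\}$, the wall-external neighbour of $u$ together with the component of $G$ attached to it in the buffer, iterating this absorption outward a bounded number of steps until only $\{a, b, c, d\}$ attract external attachments; this terminates because $V(\uparrow \eta)$ is finite. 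For the second half, I would apply \cref{l:separate} to small cycles inside $\eta(W')$ encircling $a$ and $b$ separately, and argue that an external vertex adjacent to both $a$ and $b$ would yield an $\eta$-cross, contradicting the crossless hypothesis.

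The branch sets $S_1, T_1, S_2, S_3$ would be taken inside $\eta(W')$, as four pairwise disjoint connected subgraphs containing $a, b, c, d$ respectively, with the required edges $S_1T_1$, $S_1S_2$, $T_1S_2$, $S_1S_3$, $T_1S_3$ coming from five internally-disjoint paths in $\eta(W')$; this is essentially a $K_4 - e$ minor inside $\eta(W')$, which exists because $W'$ is a wall of sufficient size with $a, b, c, d$ at prescribed locations. Finally, the four linkage paths from $\{a, b, c, d\}$ to $\partial \eta$ are routed through the buffer $W \setminus W'$ using the grid-like structure of the wall: one westward from $a$ to a peg on $L(\eta)$, one eastward from $b$ to a peg on $R(\eta)$, one southward from $c$ terminating on $B(\eta)$, and one northward from $d$ terminating on $T(\eta)$. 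These are pairwise vertex-disjoint because they sit in different quadrants of the buffer, and they are internally disjoint from $B(\eta)$ and $T(\eta)$ by making them enter these paths only at their endpoints. The main obstacle throughout is the verification of the embedding condition, and particularly the use of the crossless hypothesis to rule out external vertices attached simultaneously to $a$ and $b$; once these are in place, the branch-set construction and the buffer routing are routine wall-minor arguments.
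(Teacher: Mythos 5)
Your approach is fundamentally different from the paper's and has a genuine gap that appears unfixable as stated. The paper does not try to hand-pick the boundary $\dH$ and then force the embedding condition; instead it makes an \emph{extremal} choice: it starts with a trivial diamond patch (two adjacent vertices of a deep cycle $C_7$) and takes a diamond patch $H$ satisfying the separation condition (H1), meeting $C_7$, and having $\kappa(\dH,\partial\eta)=|\dH|$, with $|V(H)|$ as large as possible. This lets $q\in\{1,2,3\}$ be determined by the graph rather than imposed in advance, and maximality is what drives the contradiction when the embedding fails. You instead fix $q=3$ and four specific wall vertices $a,b,c,d$ as the intended boundary, and then try to enforce the embedding condition by ``absorbing'' external attachments. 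This absorption cannot terminate the way you want: the vertices of $\eta(C')$ other than $a,b,c,d$ are wall vertices whose wall-external edges go into the buffer $W\setminus W'$, which is itself connected (and connected to $\partial\eta$). Once you absorb those neighbours and ``the component attached to them in the buffer,'' you absorb the entire remaining wall, including $\partial\eta$, which destroys (DS2) and the whole construction. There is no stage at which the absorption closes off leaving only $\{a,b,c,d\}$ exposed, because every wall vertex on $C'$ (not just four of them) has exits. The paper avoids this precisely by not trying to reach a prescribed boundary.

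The second gap is the use of crosslessness. You assert that a single external vertex adjacent to both $a$ and $b$ would yield an $\eta$-cross. That claim is not justified and is not how the crossless hypothesis enters: one vertex joining $a$ and $b$ gives one ``crossing'' path, but an $\eta$-cross requires two disjoint paths each joining a pair of opposite corners of $\eta$, inside $\uparrow\eta$. The paper's use of crosslessness is considerably subtler: it arises only in the case $q=3$, after establishing $\kappa(\dH\cup\{v\},\partial\eta)=q+2$, and uses a carefully chosen set $X'$ of five non-corner boundary vertices in cyclic position together with the filter lemma (\cref{l:filter}(ii)) to obtain a $5$-linkage and then a cross from a pair of crossing paths inside $\nm{H''}$; the cyclic order argument is what forces the contradiction. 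A parallel use of crosslessness is needed again to correctly orient the boundary labels for (DS3), which your proposal does not address (it silently assumes the four linkage paths from $a,b,c,d$ can always be routed to the required sides). You also do not establish (H3)-type connectivity, i.e.\ that $\kappa(\dH,\partial\eta)$ is actually $|\dH|$ in $G$: the wall gives disjoint paths in $\eta(W)$, but the embedding condition and (DS3) are about linkages in $G$, and that is exactly what the filter cycles $C_1,\dots,C_6$ and \cref{l:filter} are used for in the paper. In short, your branch-set and routing intuition is fine, but the core of the lemma, making the patch properly embedded in $G$, requires the extremal/maximality argument and the filter machinery, neither of which appears in your sketch.
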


\begin{proof} Let $x_0,y_0$ be integers such that $W = W(X,Y)$ for  $X=[x_0+1,x_0+2l], Y=[y_0+1,y_0+h]$. For every $i \in  [0,7]$, let $X_i=[x_0+2i+1,x_0+2l-2i],Y_i=[y_0+i+1,y_0+h-i],$ $W_i=W(X_i,Y_i)$, and let $C_i$ be the outer cycle of the restriction $\eta_i$ of $\eta$ to the wall $W_i$. Then $W_j$ is a proper subwall of $W_i$ and $C_j \subseteq \uparrow \eta_i$ for all $0 \leq i \leq  j \leq 7$. Thus by ~\cref{l:crossless}, $V(C_j)$ separates $V(C_i)$ and $V(C_k)$ for all $0 \leq i < j < k \leq 7$. It follows that $\{C_i\}_{i \in [6]}$ is a $(\partial \eta,V(C_7),6)$-filter. It is easy to see that $\kappa(V(C_0),V(C_7)) \geq 6$.
	
Choose a diamond patch $H$ such that \begin{description}
\item (H1) $\nm{H}=G[V(H)]$ and $(V(H), V(G)-(V(H) -\dH))$ is a separation of $G$,
\item (H2) $V(H) \cap V(C_7) \ne \emptyset$,
\item (H3)  $\kappa(\partial H,\partial \eta)=|\partial H|$,
\end{description} and $|V(H)|$ is maximum, subject to the conditions (H1)-(H3).   
Note that such a choice is possible as a  $1$-patch $H$ with $V(H)$ consisting of the ends of any edge of $C_7$ satisfies the requirements. 

In the rest of the proof we show that some patch obtained from $H$ by relabeling the boundary is properly set in $\eta$. First, we show that every diamond patch $H$ chosen as above satisfies (DS1) and (DS2).

Let $q \in [3]$ be such that $H$ is a $q$-patch. \cref{l:filter} (i) applied to $r= q+2$, $X =  \partial \eta, Z = V(C_7), A = V(H)$,  and  $B = V(G)-(V(H) -\dH)$ implies that some cycle $C'$ in $G$ separates $V(H)$ and $\partial \eta$. In particular, (DS2) holds. 
  
Suppose next, for a contradiction that (DS1) does not hold. By (H1), it follows that  there exists $v \in V(G) - V(H)$ such that $v$ is adjacent to $a_H(1)$ and $b_H(1)$.  The existence of a cycle $C'$ as above  implies that $v \in V(\uparrow \eta)-\partial \eta$.
	
	Next we show that $\kappa(S,\partial \eta)=|S|=q+2$, where $S =\dH \cup \{v\}$. Suppose not. Then, as $\kappa(\partial H,\partial \eta)=|\dH|=q+1$ by (H3), there exists a separation $(A,B)$ of $G$ of order $q+1$ such that $G[A]$ is connected, $V(H) \cup \{v\} \subseteq A$, $\partial \eta \subseteq B$.  It further follows that there exists a $(\dH,A \cap B)$-linkage $\mc{P}$ in $G[A]$ with $|\mc{P}|=q+1$. Let $H'$ be a $q$-patch such that $\nm{H'}=G[A]$, $a_{H'}(i)$ is the second end of the path in $\mc{P}$ with an end $a_H(i)$,  and   $b_{H'}(i)$ is the second end of the path in $\mc{P}$ with an end $b_H(i)$, for every $i \in [q]$.  
	
	Let $(S_1,T_1, S_2, \ldots, S_q)$ be the subgraphs of $H$ satisfying (D4). For each of these subgraphs there exists a unique  path in $\mc{P}$ which has an end in it. By taking the union of each subgraph with the respective path we obtain subgraphs  $(S'_1,T'_1, S'_2, \ldots, S'_q)$ of $H'$, which satisfy (D4) for $H'$. It follows that $H'$ is a diamond $q$-patch. 
	Note that the conditions (H1)--(H3) hold for $H'$ and $V(H) \cup \{v\} \subseteq V(H')$. Thus  $H'$ violates the choice of $H$, yielding the desired contradiction. 
	 
	Thus $\kappa(S,\partial \eta)=|S|$. Consider now a $(q+1)$-patch $H''$ with  $\nm{H}'' = G[V(H) \cup \{v\}]$, $a_{H''}(i)=a_{H}(i)$ and $b_{H''}(i)=b_{H}(i)$ for all $i \in [q]$, and $a_{H''}(q+1)=b_{H''}(q+1)=v$. Note that the conditions (H1)--(H3) hold for $H''$, although we have not established yet that $H''$ is a diamond patch.
	
	Let $S_{q+1}$ be the one vertex graph with $V(S_{q+1})=\{v\}$. Then $(S_1,T_1, S_2, \ldots, S_{q+1})$ satisfies the condition (D3) for $H''$. Thus, if $q < 3$, then $H''$ is a diamond patch, contradicting the choice of $H$.
	
	It remains to consider the case $q=3$.  Let $X'=\{x_1,x_2,x_3,x_4,x_5\} \subseteq \partial \eta$ be a set of non-corner  vertices such that $$(c_{--}(\eta), x_1, c_{-+}(\eta),x_2,x_3,c_{++}(\eta),x_4,c_{+-}(\eta), x_5)$$
	appear on the outer cycle of $\eta$ in the order listed, and each $x_i$ is incident to an edge in $\eta(W)$ with the second end outside $\partial \eta$. 
	\begin{figure}
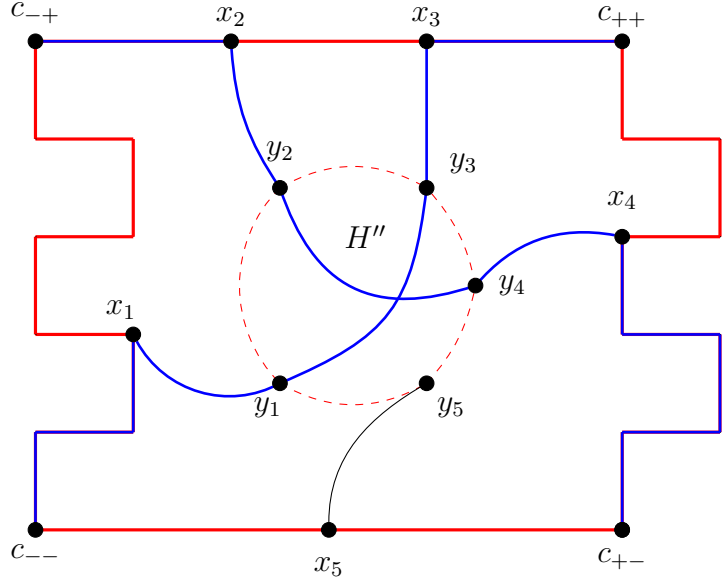

		\ctikzfig{Diamond2}
		\caption{An $\eta$-cross obtained by extending two paths in $\nm{H''}$ (one with ends $y_1$ and $y_3$, and the other with ends   $y_{2}$ and $y_{4}$)  using $\mc{P}'$ and the outer cycle of $\eta$.} \label{f:diamond2}
	\end{figure}
	
	Let $G' = (\uparrow \eta) \setminus (\partial \eta - X')$. Let $r= 5, X = V(C_1) \cup X'$, $Z = V(C_7), A = V(H'')$  and  $B = V(G')-(V(H'') -\dH'')$. It is easy to see that $\kappa_{G'}(X',Z) = 5$ and $\{C_2,\ldots,C_6\}$ is an $(X,Z,5)$-filter in $G'$. Thus   $\kappa_{G'}(X', \partial H'') = 5$ by \cref{l:filter} (ii). Let $\mc{P}'$ be an $(X',\partial H'')$- linkage in $G'$ with $|\mc{P}'|=5$. Let $y_i$ denote the end of a path in $\mc{P}'$ with the second end $x_i$ for every $i \in [5]$. Suppose that for some $j \in [5]$ there exists a pair of vertex disjoint  paths in $\nm{H''}$, one with ends $y_{j}$ and $y_{j+2}$, and the other with ends   $y_{j+1}$ and $y_{j+3}$, where the indices are taken modulo five.
	Using $\mc{P}'$ these paths can be extended to a pair of vertex disjoint paths, internally disjoint from $\partial \eta$, one with ends $x_{j}$ and $x_{j+2}$, and the other with ends    $x_{j+1}$ and $x_{j+3}$. These can be further extended along the outer cycle of $\eta$ to an $\eta$-cross. See Figure~\ref{f:diamond2}  for an example.
	
	As $\eta$ is crossless, no pair of paths as above exists. Let $Z'= \{a_{H''}(2),a_{H''}(3), a_{H''}(4)\}$. It follows from the condition (D3) that  for every $j \in [5]$ either $\{y_{j}, y_{j+2}\} \subseteq Z'$, or $\{y_{j+1}, y_{j+3}\} \subseteq Z'$. This is clearly impossible, yielding the desired contradiction. This finishes the verification of the condition (DS1) for $H$.
	
It remains to find the $(\partial H,\partial \eta)$-linkage satisfying (DS3). Let $X''=\{x_1,\ldots,x_{4}\}\subseteq \partial \eta$ be a set of  vertices such that $$(c_{--}(\eta), x_1, x_4, c_{-+}(\eta),c_{++}(\eta),x_3, x_2,c_{+-}(\eta))$$ appear on the outer cycle of $\eta$ in the order listed, in particular  $x_1, x_4 \in V(L(\eta))$, $x_2,x_3 \in V(R(\eta))$, and, as before, each $x_i$ is incident to an edge in $\eta(W)$ with the second end outside $\partial \eta$.  Let $X' =\{x_1,\ldots,x_{q+1}\}$. By \cref{l:filter} applied to the graph $G'=\uparrow \eta \setminus (\partial \eta - X')$ with $r=q+1$,  $X = V(C_1) \cup X'$,  $Z = V(C_7), A = V(H)$  and  $B = V(G')-(V(H) -\dH)$, we have  $\kappa_{G'}(X', \partial H) = q+1$. Let the linkage $\mc{P}'$ in this case  be an $(X',\partial H)$- linkage in $G'$ with $|\mc{P}'|=q+1$. As before, we define $y_i$ to be the end of a path in $\mc{P}'$ with the second end $x_i$ for every $i \in [q+1]$.

Suppose first that $q \leq 2$. Consider the patch $H^{*}$ such that $\nm{H^{*}}=\nm{H}$, $a_{H^{*}}(1)=y_1,b_{H^{*}}(1)=y_2$, and, if $q=2$, $a_{H^{*}}(2)=b_{H^{*}}(2)=y_3$. Then $H^{*}$ is also a diamond patch, satisfying (H1)-(H3). Thus, as we have already shown,  $H^{*}$ satisfies
(DS1) and (DS2). Moreover, the paths in $\mc{P}'$ starting in $y_1,y_2$ and $y_3$ can be extended along the outer cycle of $W$  to a peg on $L(\eta),$ a peg on $R(\eta)$, and $c_{--}(\eta)$, respectively, producing a linkage satisfying (DS3).

Finally, we finish the proof in the case $q=3$. Let $i \in [4]$ be such that $a_H(1)=y_i$. Suppose that $b_H(1) \neq y_{i+2}$, where the indices of $y$ vertices here and below are taken modulo four. Then by (D3) there exist vertex disjoint paths $P,Q \subseteq H$ such that $P$ has ends $y_i$ and $y_{i+2}$  and $Q$ has ends $y_{i+1}$ and $y_{i+3}$. Extending these paths using $\mc{P}'$ to the outer cycle of $\eta$, and along the outer cycle to the corners, we obtain an $\eta$-cross, a contradiction.

Thus $b_H(1) = y_{i+2}$. As in the previous case, we may switch $a_H(1)$ and $b_H(1)$, and so we  assume that $a_H(1) \in \{y_1,y_4\}$.
If $a_H(1)=y_1$ then $b_H(1)=y_3$, and by switching $a_H(2)$ and $a_H(3)$, if needed, we may assume $a_H(2)=y_2$ and $a_H(3)=y_4$. Extending the paths in  $\mc{P}'$ starting in $y_1,y_2,y_3$ and $y_4$ along the outer cycle of $\eta$ to a peg on $L(\eta),$ $c_{+-}(\eta)$, a peg on $R(\eta)$, and $c_{-+}(\eta)$, we obtain the linkage satisfying (DS3). See \cref{f:diamond}. If $a_H(1)=y_4$  then $b_H(1)=y_2$, we switch the labels so that $a_H(2)=y_1$ and $a_H(3)=y_3$, and extend $\mc{P}'$ along the outer cycle of $\eta$ from $y_1,y_2,y_3$ and $y_4$ to $c_{--}(\eta)$,  a peg on $R(\eta)$, $c_{++}(\eta)$, and a peg on $L(\eta),$ respectively,  obtaining the linkage satisfying (DS3) in this last case.
\end{proof}	
 
\begin{figure}
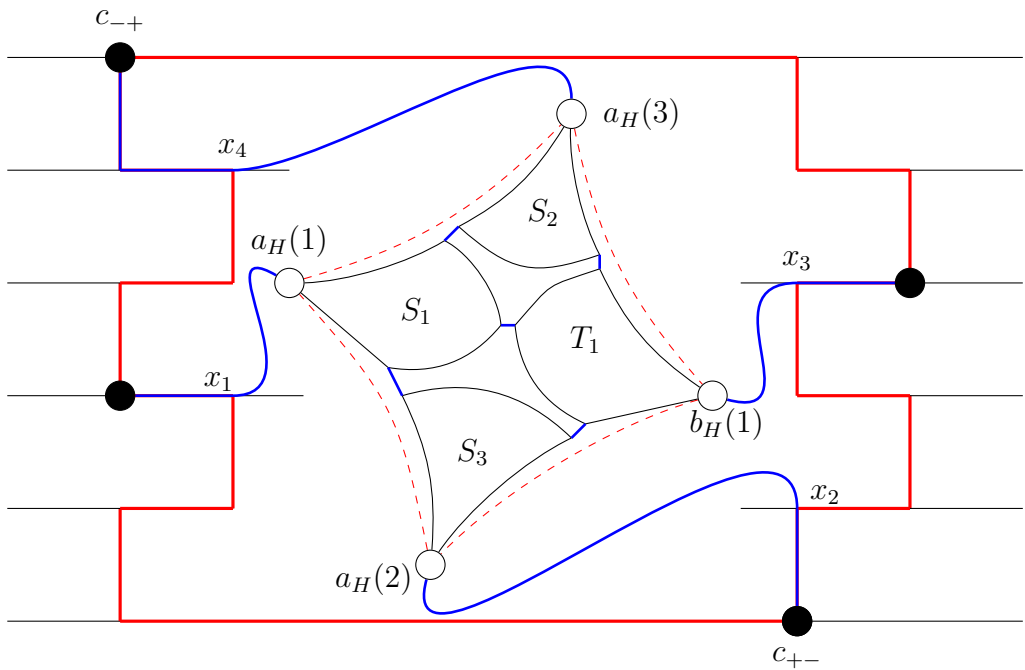

\ctikzfig{Diamond}
\caption{A diamond $3$-patch $H$ properly set in an embedding of a wall with a linkage satisfying (DS3) indicated in blue.} \label{f:diamond}
\end{figure}

We are now ready to derive \cref{t:large} from \cref{t:flat} and \cref{l:diamond}.

\begin{proof}[Proof of  \cref{t:large}]
	Let $h=18$, $s = 4t+h$, $k=6(t-5)l$, and let $n=N_{\ref{t:flat}}(t,k,h)$.  Let $G$ be a graph with an $(n,s)$-grid minor such that $K_t \not \leq G$. As $W([n],[s])$ is a subgraph of the $(n,s)$-grid we have $W([n],[s]) \leq G$, and so by \cref{o:embedding} there exists an embedding $\eta$ of $W([n],[s])$ into $G$.  By \cref{t:flat} there exists  $A \subseteq V(G)$ with $|A| \leq t-5$ and  integers $a_1,a_2,\ldots,a_k \in [N-h]$ such that $|a_i-a_j| > 2h$ for all $i \neq j$ and the restriction of $\eta$ to $W([a_i, a_i+2h-1],[2t,2t+h])$ is a crossless embedding into $G \setminus A$. Let $G' = G \setminus A$
	
	For every $i \in [k]$, let $W'_i = W([a_i, a_i+2h-1],[2t,2t+h])$, and let $W_i=W([a_i+2, a_i+2h-3],[2t+1,2t+h-1])$. Let $\eta'_i$ and $\eta_i$ denote the restrictions of $\eta$ to $W'_i$ and $W_i$, respectively,  considered as embeddings into $G'$.
	
	 By \cref{l:crossless} the embedding $\eta_i$  is   crossless. By \cref{l:diamond} there exists $q_i \in [3]$ and a diamond $q_i$-patch $H_i$ properly set in $\eta_i$. By \cref{l:separate},
	$\partial \eta_i$ separates $V(\uparrow \eta_i) - \partial \eta_i$ from $\partial \eta'_i$ in $G'$. 
	
	By the pigeonhole principle, reordering $a_1,a_2,\ldots,a_k$ if needed we may assume that
	\begin{itemize}
		\item $a_i < a_{i+1} - 4h$ for $i \in [l-1]$
		\item $q_1=q_2 = \ldots = q_l =q'$ for some $q \in [3]$,
		\item $V(\eta(W([a_1,a_l+2h-1],[s]))) \cap A = \emptyset.$
	\end{itemize} 
	
	We claim that $\mc{H}=(H_1,\ldots,H_l)$ is a refined $q'$-patchwork stitched to $G'$. Note that this claim implies the theorem, as we can add the vertices of $A$ to all patches of $\mc{H}'$ to obtain a refined $(q'+|A|)$-patchwork stitched to $G$. More formally, let $q = q'+|A|$ and let $A= \{v_{q'+1},v_{q'+2}, \ldots, v_q\}$. We transform every patch $H_i$ into a $q$-patch $H'_i$ by defining $\nm{H'_i}=G[V(H_i) \cup A]$, and extending the maps $a_{H_i}$ and $b_{H_i}$ to $a_{H'_i}$ and $b_{H'_i}$ by setting $a_{H'_i}(j)=b_{H'_i}(j)= v_j$ for all $j \in [q'+1,q]$. It can now be routinely verified that 
	$(H_1,\ldots,H_l)$ is a refined $q$-patchwork stitched to $G$.
	
	It remains to prove the claim. A diamond patch is refined by definition, and so $\mc{H}$ is refined. Next we verify that 
	$\mc{H}$ is embedded in $G'$. The property (E1) holds by the property (DS1) of properly set patches. 
	For all $\{i,j\} \subseteq  [l]$,  $V(\uparrow \eta_j)$ belongs to the same  component of  $G' \setminus \partial \eta_i$ as  $\partial \eta'_i$, and therefore $\partial \eta_i$ separates $V(\uparrow \eta_i)- \partial \eta_i$ and $V(\uparrow \eta_j)$.  Thus by (DS2), $\partial \eta_i$ of $W_i$ separates $V(H_i)$ and $V(H_j)$,  implying (E2), (E3) and (E4).
	
\begin{figure}
	\ctikzfig{Stitch}
	\caption{A fragment of the linkage $\mc{P}$ stitching the patchwork $\mc{H}$ to $G'$.} \label{f:stitch}
\end{figure}		

	It remains to construct a collection of paths $\mc{P}=(P_{i,j})_{i \in [q'], j \in [l-1]}$ stitching $\mc{H}$ to $G'$. 
	We assume $q'=3$ for clarity, while our argument can be trivially adapted to the cases $q'=1,2$, by omitting some of the paths. Let $(L_{1,j},L'_{1,j},L_{2,j},L_{3,j})$ be the linkage satisfying (DS3) for $H_j$ and $\eta_j$. Let $R_k$ denote the image of the path in $W$ consisting of all the vertices with $y$-coordinate equal to $k$.
	 We define \begin{itemize} \item $P_{1,j}$ to be the union of paths $L'_{1,j}, L_{1,j+1}$ and a path in $$\eta(W([a_j+2h-3, a_{j+1}], [2t+2,2t+h-2]))$$ joining the ends of these two paths;
	\item $P_{2,j}$ to be the union of paths $L_{2,j}, L_{2,j+1}$ and a path in $R_{2t+1}(\eta)$ joining their ends;
	\item $P_{3,j}$ to be the union of paths $L_{3,j}, L_{3,j+1}$ and a path in $R_{2t+h-1}(\eta)$ joining their ends.
	\end{itemize} 
	See Figure \ref{f:stitch}.
	It is easy to see that $\mc{P}$ is as required.
\end{proof}

\section{Topological minors}\label{s:top}

In this section we prove \cref{t:topological}. For graphs $H$ and $G$, we write $H \leq_t G$ if $H$ is a topological minor of $G$.

We start with \cref{t:topological} (i). Eppstein's proof~\cite{Eppstein10} of the fact that densities of minor-closed classes are well-ordered uses the Robertson-Seymour graph minor theorem.  We substitute it with the following theorem by Liu and Thomas~\cite{LiuTho20}. 

To state this theorem we first need to introduce the family of graphs which play the key role in well-quasi-ordering of topologically minor-closed families, defined as follows.
A \emph{simple Robertson chain  of length $k$}, denoted by $RC(k)$, is the graph obtained from a path $P$ with $k$ edges by adding for every $e \in E(P)$ a vertex $v_e$ of degree two adjacent to ends of $e$. That is $RC(k)$ consists of $k$ triangles glued in a linear fashion along one vertex cutsets. 

Let $RC'(k)$ be obtained from $RC(k)$ by additionally attaching three leaves to each of the ends of $P$. It is not hard to see that the graphs $\{RC'(k)\}_{k \in \bb{N}}$ are pairwise incomparable in the topological minor order. Further  families  with this property can be obtained from $\{RC(k)\}_{k \in \bb{N}}$ in a similar manner.

Conversely, Robertson conjectured that all obstructions to well-quasi-ordering by the topological minor  relation involve Robertson chains. This conjecture, and its strengthening to labeled minors along the lines of \cref{t:RSwqo}, were proved by Liu in his PhD thesis~\cite{Liu14}. We only need a restricted version of this result, which is a weakening of ~\cite[Theorem 1.3]{LiuTho20}   

\begin{thm}[\cite{LiuTho20}]\label{t:topwqo}
	Let $k$ be a positive integer and let $L$ be a finite set. For every $i \in \bb{N}$, let $(G_i)_{i \in \bb{N}}$ be a graph such that $\tw(G_i) \leq k$ and $RC(k) \not \leq_t G_i$ and let $\phi_i: V(G_i) \to L$ be a function.   Then for some $i < j$ there exists an embedding $\eta$ of $G_i$ into $G_j$ such that $\phi_j(\eta(v))=\phi_i(v)$ for every $v \in V(G_i)$.
\end{thm}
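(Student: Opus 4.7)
My plan is to reduce Theorem~\ref{t:topwqo} to the labeled graph minor theorem of Robertson and Seymour (Theorem~\ref{t:RSwqo}), which is already stated in the excerpt. The difficulty to overcome is that Theorem~\ref{t:RSwqo} delivers a label-preserving \emph{minor} model, whereas the desired conclusion demands a label-preserving \emph{topological} embedding, i.e.\ an honest subdivision of $G_i$ inside $G_j$.

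The key idea would be to transform each $G_i$ into a subcubic labeled graph $\hat G_i$ as follows. Each vertex $v$ of $G_i$ of degree at least $4$ gets replaced by a small tree-gadget of maximum degree $3$ whose internal vertices are decorated with structural labels identifying the gadget and recording $\phi_i(v)$, and each maximal degree-$2$ subdivision path of $G_i$ is replaced by a single labeled edge whose label encodes the length of the path and the sequence of original labels along it. The hypothesis $\tw(G_i) \leq k$ carries over to $\hat G_i$ up to a bounded multiplicative constant, and the hypothesis $RC(k)\not\leq_t G_i$ controls the combinatorial types of subdivision paths between branch vertices, so that a single finite label alphabet suffices across the whole sequence. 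Applying Theorem~\ref{t:RSwqo} to $(\hat G_i, \hat\phi_i)$ yields indices $i<j$ together with a label-preserving minor model of $\hat G_i$ inside $\hat G_j$; since $\hat G_i$ is subcubic, \cref{o:embedding} converts this into a label-preserving topological embedding $\hat\eta \colon \hat G_i \to \hat G_j$, and decoding the gadget-labels and path-labels then recovers the desired topological embedding $\eta \colon G_i \to G_j$ with $\phi_j \circ \eta = \phi_i$.

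The main obstacle is the \emph{faithfulness} of this encoding: one must arrange the structural labels so that any label-preserving minor model of $\hat G_i$ inside $\hat G_j$ is forced to map each gadget of $\hat G_i$ into a single gadget of $\hat G_j$, rather than splitting or merging gadgets in a way that cannot be lifted back to a subdivision of $G_i$ inside $G_j$. This is where the hypothesis $RC(k) \not\leq_t G_i$ is used most concretely: together with bounded treewidth, it rules out the long chain-like subdivision structures that would otherwise defeat any finite labeling scheme and prevent the decoding step from being well-defined. Making this precise is the technical heart of the argument and is essentially the content of~\cite{LiuTho20}.
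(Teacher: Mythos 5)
This theorem is not proved in the paper at all; it is imported as a weakening of \cite[Theorem 1.3]{LiuTho20}, which itself rests on Liu's resolution of Robertson's conjecture in his thesis. So there is no internal proof to compare against, and what remains is to assess whether your sketch would actually recover the statement from \cref{t:RSwqo}. It would not: there is a genuine gap.

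The encoding step is not well-defined. You propose replacing each maximal degree-$2$ path of $G_i$ by a single edge whose label records ``the length of the path and the sequence of original labels along it.'' For \cref{t:RSwqo} to apply the label alphabet must be a \emph{fixed finite set} $L$ across the entire sequence $(G_i)_{i\in\bb{N}}$, yet neither path lengths nor the label sequences along them are bounded. Your claim that excluding $RC(k)$ as a topological minor forces a finite alphabet is false: every subdivision of $K_4$ is subcubic and has treewidth $3$, hence topologically excludes $RC(j)$ for every $j\geq 2$ (which contains a vertex of degree $4$), but such graphs contain degree-$2$ paths between branch vertices of every length. The Robertson-chain exclusion in \cite{LiuTho20} serves to rule out the specific infinite antichains obtained by decorating long chains of triangles; it does not bound lengths or label patterns of subdivided edges, and there is no known elementary reduction of the labeled topological-minor theorem to the labeled graph-minor theorem along the lines you suggest. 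A secondary issue: even for a subcubic $\hat G_i$, a label-preserving minor model from \cref{t:RSwqo} only guarantees that each bag $\alpha(v)$ contains \emph{some} vertex with the right label, while straightening the model into a subdivision via \cref{o:embedding} forces a particular branch vertex to play the role of $\eta(v)$, and that vertex need not be the correctly labeled one; reconciling these is precisely the sort of faithfulness argument your sketch defers rather than supplies.
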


Extending the definition of topological minors to patches, we say that a $q$-patch $H$ is a \emph{topological minor} of a $q$-patch $G$, and write $H \leq_t G$ if there exists an embedding $\eta$ of $\nm{H}$ into $\nm{G}$ such that $\eta(a_H(i))=a_G(i)$ and   $\eta(b_H(i))=b_G(i)$ for every $i \in [q]$. It is easy to verify that, similarly the minor order, the topological minor order is well-behaved with respect to patch products:  if $H_1,G_1,H_2$ and $G_2$ are patches $H_1 \leq_t G_1$ and $H_2 \leq_t G_2$, then $H_1 \times H_2 \leq_t G_1 \times G_2$. In the same way that \cref{t:wqo} is derived from \cref{t:RSwqo}, we obtain from \cref{t:topwqo} the following.

 \begin{cor} \label{c:topwqo}
 	Let $k \geq 1$, $q \geq 0$ be integers.  For every $i \in \bb{N}$, let $H_i$ be a $q$-patch such that $\tw(\nm{H_i}) \leq k$ and $RC(k) \not \leq_t \nm{H_i}$.  Then $H_i \leq H_j$ for some $i < j$.
 \end{cor}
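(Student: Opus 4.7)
The plan is to mimic the proof of \cref{t:wqo}, simply replacing the application of \cref{t:RSwqo} by \cref{t:topwqo}. First, I would introduce the finite label set $L = \{0\} \cup \{(i,j)\}_{i \in [q], j \in [3]}$ and define $\phi_i : V(H_i) \to L$ by setting $\phi_i(a_{H_i}(\ell)) = (\ell,1)$ and $\phi_i(b_{H_i}(\ell)) = (\ell,2)$ when $a_{H_i}(\ell) \neq b_{H_i}(\ell)$, $\phi_i(a_{H_i}(\ell)) = (\ell,3)$ when $a_{H_i}(\ell) = b_{H_i}(\ell)$, and $\phi_i(v) = 0$ for every $v \in V(H_i) - \partial H_i$. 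This labeling records precisely the information a boundary vertex needs to carry in order to recognize a patch embedding.

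Next, I would observe that the hypotheses of \cref{t:topwqo} are met: by assumption each $\nm{H_i}$ satisfies $\tw(\nm{H_i}) \leq k$ and $RC(k) \not\leq_t \nm{H_i}$. Applying \cref{t:topwqo} to the labeled sequence $(\nm{H_i}, \phi_i)_{i \in \bb{N}}$ yields indices $i < j$ and an embedding $\eta$ of $\nm{H_i}$ into $\nm{H_j}$ such that $\phi_j(\eta(v)) = \phi_i(v)$ for every $v \in V(H_i)$.

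Finally, I would unpack what this label preservation means at the boundary. Since the only vertex of $H_j$ with label $(\ell,1)$ is $a_{H_j}(\ell)$ (and likewise $(\ell,2)$ identifies $b_{H_j}(\ell)$, and $(\ell,3)$ identifies the unique vertex equal to both $a_{H_j}(\ell)$ and $b_{H_j}(\ell)$), label preservation forces $\eta(a_{H_i}(\ell)) = a_{H_j}(\ell)$ and $\eta(b_{H_i}(\ell)) = b_{H_j}(\ell)$ for every $\ell \in [q]$, with the degenerate case $a_{H_i}(\ell) = b_{H_i}(\ell)$ being handled cleanly by the third label value. Hence $\eta$ is precisely an embedding of the $q$-patch $H_i$ into the $q$-patch $H_j$ in the sense defined in Section~\ref{s:top}, so $H_i \leq_t H_j$ as required.

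I do not anticipate any real obstacle: the only mildly subtle point is the bookkeeping for the degenerate case $a_{H_i}(\ell) = b_{H_i}(\ell)$, which is exactly why $L$ uses three labels per index rather than two. The corollary is a direct transcription of \cref{t:wqo}'s proof with the stronger topological-minor hypothesis plugged in.
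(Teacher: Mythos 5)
Your proposal is correct and follows exactly the approach the paper intends: the paper states this corollary is obtained from Theorem~\ref{t:topwqo} ``in the same way that \cref{t:wqo} is derived from \cref{t:RSwqo},'' which is precisely the label-set construction you carry out. Two small observations that do no harm: your argument in fact yields $H_i \leq_t H_j$ (rather than merely $H_i \leq H_j$ as literally stated in the corollary), which is exactly the form used later in the proof of \cref{t:topological}(i), so you have proved the version that is actually needed; and the point you flag about the three label values per index is indeed what makes the degenerate case $a_{H_i}(\ell)=b_{H_i}(\ell)$ force $a_{H_j}(\ell)=b_{H_j}(\ell)$, since otherwise no vertex of $H_j$ carries label $(\ell,3)$.
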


We say that a $q$-patch $H$ is a \emph{fan} patch, if $H$ is non-degenerate, $a_H(i)=b_H(i)$ for every $i \in [q]$, $\nm{H} \setminus \dH$ is connected, and every vertex  of $\dH$ has a neighbor in $V(H)-\dH$. 
Eppstein~\cite[Lemma 17]{Eppstein10} proved that the limiting density of a proper minor-closed family $\mc{F}$ can be approximated with arbitrary precision by powers of $\mc{F}$-tame fan patches.\footnote{In comparison, in \cref{t:pathwidth} we attain $d(\mc{F})$ exactly, but we can not restrict our attention to fan patches.} Importantly, the proof of~\cite[Lemma 17]{Eppstein10}  extends to 
 any monotone\footnote{A graph family is \emph{monotone} if it is closed under subgraphs and isomorphisms.} subfamily of a proper minor-closed graph family, giving the following.

\begin{lem}[\protect{\cite[Lemma 17]{Eppstein10}}]\label{l:fan}
	Let $\mc{F}$ be a monotone graph family such that $\mc{F} \subseteq \mc{F}'$ for some proper minor-closed family $\mc{F}'$. Then for any $\eps > 0$ there exists an $\mc{F}$-tame fan patch $H$ such that $$\lim_{n \to \infty}d(\nm{H^n}) \geq d(\mc{F}) -\eps.$$
\end{lem}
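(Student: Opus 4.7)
The plan is to reproduce Eppstein's proof of \cite[Lemma 17]{Eppstein10}, noting that his argument uses only two properties of $\mc{F}$: (a) existence of graphs in $\mc{F}$ of arbitrarily large size with density arbitrarily close to $d(\mc{F})$, and (b) closure of $\mc{F}$ under subgraphs. Property (a) is built into the definition of $d(\mc{F})$, and (b) is exactly monotonicity; neither requires $\mc{F}$ to be minor-closed. All use of minor-closedness in Eppstein's original proof is through the containing class $\mc{F}'$, which is proper minor-closed by hypothesis.

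The construction proceeds as follows. For each large $N$, choose $G_N \in \mc{F}$ with $|V(G_N)| \geq N$ and $d(G_N) \geq d(\mc{F}) - \eps/2$. Since $\mc{F} \subseteq \mc{F}'$, Mader's theorem yields a uniform degeneracy bound $d_0$ on all graphs in $\mc{F}'$. Using this bound, a greedy peeling along a degeneracy ordering together with a sunflower-style pigeonhole locates, inside $G_N$, a vertex set $B$ of size $q \leq q_0(d_0,\eps)$ and an arbitrarily large collection of pairwise vertex-disjoint connected ``spokes'' $S_1, \ldots, S_m \subseteq V(G_N) \setminus B$ of bounded size, with all external neighbors of each $S_i$ contained in $B$, and such that the labeled subgraphs $G_N[B \cup S_i]$ are pairwise isomorphic and contribute edges outside $B$ at a per-spoke rate of at least $d(\mc{F}) - \eps$.

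Setting $\nm{H} := G_N[B \cup S_1]$ with $a_H = b_H$ the inclusion of $B$ (and discarding any boundary vertex without a neighbor in the interior) produces a fan $q$-patch $H$. By construction $\nm{H^m}$ embeds as a subgraph of $G_N$, and hence lies in $\mc{F}$ by monotonicity. The bound on $|V(H)|$ is independent of $N$, so pigeonhole over the finitely many fan patches of bounded size yields a single $H$ with $\nm{H^m} \in \mc{F}$ for arbitrarily large $m$. Because for a fan patch $\nm{H^m}$ is an induced subgraph of $\nm{H^{m'}}$ whenever $m \leq m'$, monotonicity promotes this to $\nm{H^m} \in \mc{F}$ for every $m$, certifying $\mc{F}$-tameness of $H$. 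The density bound $\lim_{n \to \infty} d(\nm{H^n}) \geq d(\mc{F}) - \eps$ then follows directly from \cref{c:limitdensity} and the per-spoke density lower bound.

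The main obstacle is the combinatorial extraction in the second paragraph: finding many vertex-disjoint, pairwise isomorphic, ``dense'' spokes attached to a common small boundary. Eppstein's argument handles this by a careful greedy peeling along a degeneracy ordering followed by pigeonhole over labeled isomorphism types, ensuring that the per-spoke edge rate is not spoiled by ``wasted'' edges internal to $B$ or by spokes of disproportionately low density. Adapting this step to our setting requires no new work: the extraction operates entirely inside $\mc{F}'$, and the only place where a property of $\mc{F}$ itself is used is the final subgraph-closure step, which is exactly monotonicity.
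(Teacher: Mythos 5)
Your proposal matches the paper's treatment: the paper does not reprove Eppstein's Lemma~17 either, but simply cites it and makes the same observation you make in your first paragraph, namely that the argument uses the class $\mc{F}$ itself only through subgraph-closure (monotonicity) and the definition of limiting density, with minor-closedness invoked only via the ambient class $\mc{F}'$. Your additional sketch of Eppstein's construction (bounded degeneracy from Mader, extraction of disjoint isomorphic spokes attached to a small common boundary, pigeonhole over bounded-size fan patches, and the induced-subgraph nesting $\nm{H^m}\subseteq\nm{H^{m'}}$ for fan patches to upgrade to full $\mc{F}$-tameness) is consistent and correct, but the paper itself does not include it.
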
 

The next simplelemma shows that for families of density less than two, we can further assume that the patch $H$ satisfying the conditions of \cref{l:fan} is a $0$-, or $1$-patch and that \cref{c:topwqo} is applicable to $H$.

\begin{lem}\label{o:fan2} For every $\delta >0$ there exists $k = k_{\ref{o:fan2}}(\delta)$ satisfying the following. Let $\mc{F}$ be a  family of graphs closed under taking topological minors with $d(\mc{F}) \leq 3/2 - \delta$, and let  $H$ be an $\mc{F}$-tame fan $q$-patch. Then $q \leq 1$, $\tw(\nm{H})  \leq k$ and $RC(k) \not \leq_t \nm{H}$.
\end{lem}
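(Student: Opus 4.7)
The plan is to derive each of the three conclusions --- $q \leq 1$, $\tw(\nm{H}) \leq k$, and $RC(k) \not\leq_t \nm{H}$ --- by producing an infinite subfamily of $\mc{F}$ whose graphs violate $d(\mc{F}) \leq 3/2 - \delta$.

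First, to show $q \leq 1$, I would assume $q \geq 2$ and set $u = a_H(1) = b_H(1)$ and $v = a_H(2) = b_H(2)$ (equal pairs, since $H$ is fan). Both $u$ and $v$ are shared across every copy of $H$ in $\nm{H^n}$. The fan property makes $\nm{H} \setminus \partial H$ connected and gives $u$ and $v$ interior neighbors, so each copy of $H$ in $\nm{H^n}$ contributes a $u$-$v$ path whose internal vertices lie in that copy's interior. These $n$ paths are internally vertex-disjoint, so $\nm{H^n}$ contains a subdivision of $K_{2,n}$. By topological-minor closure, $K_{2,n} \in \mc{F}$ for every $n$, and $d(K_{2,n}) = 2n/(n+2) \to 2$ forces $d(\mc{F}) \geq 2 > 3/2 - \delta$, a contradiction.

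Next, I set $H_0 = \nm{H} \setminus \partial H$, which is connected by the fan property. For $q = 0$, the graph $\nm{H^n}$ is the disjoint union of $n$ copies of $H_0$; for $q = 1$, deleting the shared vertex of $\nm{H^n}$ yields the same disjoint union as a subgraph. Either way, $n$ disjoint copies of $H_0$ lie in $\mc{F}$ for all $n$ (using that topological-minor closure includes subgraph closure). Hence for any graph $K$ with $K \leq_t H_0$, the disjoint union $n \cdot K$ is a topological minor of $n \cdot H_0$, so $n \cdot K \in \mc{F}$ for all $n$, yielding $d(\mc{F}) \geq d(K)$. I apply this first with $K$ a wall: by the Robertson-Seymour grid-minor theorem, $\tw(H_0) \geq f(r)$ implies an $(r,r)$-grid minor in $H_0$, and since walls have maximum degree three, an $(\lfloor r/2\rfloor,\lfloor r/2\rfloor)$-wall is then a topological minor of $H_0$. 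Wall densities tend to $3/2$, so taking $r$ larger than a threshold depending on $\delta$ yields a contradiction, bounding $\tw(H_0) \leq w_0(\delta)$ and therefore $\tw(\nm{H}) \leq w_0(\delta) + 1$. Similarly $d(RC(k_0)) = 3k_0/(2k_0 + 1) > 3/2 - \delta$ whenever $k_0 > (3 - 2\delta)/(4\delta)$, so $H_0$ contains no such $RC(k_0 + 1)$ as a topological minor.

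Finally, to lift the Robertson chain bound from $H_0$ up to $\nm{H}$: if $RC(k) \leq_t \nm{H}$ via a subdivision $T$ that avoids $\partial H$, then $T \subseteq H_0$ and $k$ is already bounded. Otherwise, since $|\partial H| \leq 1$, the unique boundary vertex lies in $T$; deleting it breaks the chain at a single position, and a short case analysis (spine-branch vs.\ triangle-branch vs.\ path-subdivision vertex on a spine edge vs.\ on a triangle edge) shows that the longer surviving fragment contains a subdivision of $RC(k')$ with $k' \geq \lfloor (k-1)/2 \rfloor - 1$, which now lies in $H_0$. Taking $k_{\ref{o:fan2}}(\delta)$ to dominate both the tree-width bound $w_0(\delta) + 1$ and the resulting $RC$ bound completes the proof. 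The main obstacle I expect is this final case analysis, which is elementary but requires careful tracking of which fragments of $RC(k)$ survive the deletion of a single vertex.
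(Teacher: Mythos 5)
Your proposal is correct and, for the first two conclusions, essentially mirrors the paper's argument: show $q \geq 2$ forces density $\geq 2$ via a $K_{2,n}$-type gadget (the paper contracts $H$ down to a patch with two edges and one interior vertex and applies \cref{c:limitdensity}; your direct $K_{2,n}$ construction in $\nm{H^n}$ is the same computation), and bound treewidth via the grid-minor theorem plus the fact that walls/long grids have density approaching $3/2$ and propagate by disjoint union through $H_0 = \nm{H}\setminus\partial H$.

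For the third conclusion you take an unnecessary detour. You first bound $RC$ in $H_0$, then try to lift that bound to $\nm{H}$ by deleting the one boundary vertex from a hypothetical subdivision of $RC(k)$ and arguing, via a case analysis on where that vertex lands, that a fragment $RC(k')$ with $k' \gtrsim k/2$ survives. That analysis can be made to work (the worst case is an interior spine vertex, leaving a fragment with roughly $\lceil (k-2)/2 \rceil$ triangles), but the paper avoids it entirely by using exactly the same density computation it used for walls: if $RC(k)\leq_t\nm{H}$ then deleting the (at most one) boundary vertex from the subdivision yields a topological minor $G'$ of $H_0$ with $|V(G')|=2k$ and $|E(G')|\geq 3k-4$ (since $RC(k)$ has maximum degree $4$), so $d(G')\geq (3k-4)/(2k)$, which exceeds $3/2-\delta$ once $k>2/\delta$; this contradicts $d(G')\leq d(\mc{F})$. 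No fragment-tracking is needed --- the structure of the deleted graph is irrelevant, only its edge count matters. Your approach buys nothing here and is the only place where the ``case analysis'' you flag as an obstacle arises; switching to the density count removes it.
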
	 

\begin{proof} Suppose first, for a contradiction, that $H$ is  an $\mc{F}$-tame fan $q$-patch for some $q \geq 2$. By definition of a fan patch, there exists a path $P$ in $H$ with ends $a_H(1)$ and $a_H(2)$, $|V(P)| \geq 3$ and $P$ is internally disjoint from $\dH$. By deleting vertices in $V(H) - \partial H - V(P)$ and contracting all, but two edges of $P$ we obtain a topological minor  $H'$ of $H$ such that $|V(H')|=q+1$ and $e(H')=2$. Then $H'$ is $\mc{F}$-tame and  By \cref{c:limitdensity} we have $$d(\mc{F}) \geq \lim_{n \to \infty}d(\nm{(H')^n}) \geq 2,$$
	a contradiction.
	
Thus $q \leq 1$. Let $G = \nm{H} - \partial H$.  Note that $G$, considered as a $0$-patch is $\mc{F}$-tame, and so \begin{equation}\label{e:gprime} d(G') \leq d(\mc{F}) \leq \frac{3}{2}-\delta \end{equation} for every topological minor $G'$ of $G$. 

Let $GR(w)$ denote the $(w,2)$-grid. It is well-known that for every positive integer $w$ there exists $k=k(w)$ such that $GR(w) \leq J$ for every graph $J$ with $\tw(J) \leq k$. By \cref{o:embedding} this in turn implies, $GR(w) \leq_t J$ as the maximum degree of $GE(w)$ is at most $3$. Let an integer $w \geq 1$ be chosen so that $\frac{3w-5}{2w-1} > 3/2 - \delta$, and let $k \geq k(w)$ be chosen so that $\frac{3k-4}{2k} > 3/2 - \delta$. 

It is easy to see from the above observation that such $k$ satisfies the lemma. Indeed, if $\tw(\nm{H}) \geq k$ then $GR(w) \leq_t \nm{H}$ and so $G$ contains as a topological minor a graph $G'$ obtained from $GR(w)$ by deleting one vertex. We have
$$d(G') \geq \frac{|E(GR(w))| - 3}{|V(GR(w))|-1} = \frac{3w-5}{2w-1} > \frac{3}{2}-\delta,$$ 
contradicting \eqref{e:gprime}. Thus $\tw(\nm{H}) \leq k$.

If $RC(k) \leq_t \nm{H}$ we obtain the contradiction in the same way, and so $RC(k) \not \leq_t \nm{H}$
\end{proof}	

\cref{c:topwqo} and Lemmas ~\ref{l:fan} and~\ref{o:fan2} imply  \cref{t:topological} (i)  as follows.

\begin{proof}[Proof of \cref{t:topological} (i).]
	Suppose for a contradiction that $\mc{D}_T \cap [0,3/2]$ is not well-ordered. Then there exist families $\mc{F}_1,\mc{F}_2,\ldots, \mc{F}_n \ldots$ closed under taking topological minors such that $3/2 > d(\mc{F}_1) > d(\mc{F}_2) > \ldots> d(\mc{F}_n)> \ldots$.
	Let $\delta= 3/2- d(\mc{F}_1)$, and let $k = k_{\ref{o:fan2}}(\delta)$.
	
By \cref{l:fan} for every $i \in \bb{N}$ there exist $\mc{F}_i$-tame fan $q_i$-patch $H_i$ such that $\lim_{n \to \infty}d(\nm{H_i^n}) > d(\mc{F}_{i+1}).$
By \cref{o:fan2} we have $q_i \leq 1$,  $\tw(\nm{H})  \leq k$ and $RC(k) \not \leq_t \nm{H}$. By restricting to a subsequence of $(\mc{F}_i)_{i \in \bb{N}}$ we assume without loss of generality that there exists $q \in \{0,1\}$ such that $q_i=q$ for every $i$.

By \cref{c:topwqo} there exist integers $1 \leq i < j$ such that  $H_i \leq_t H_j$. Thus $H_i$ is $\mc{F}_j$-tame, implying $$d(\mc{F}_{i+1}) < \lim_{n \to \infty}d(\nm{H_i^n}) \leq d(\mc{F}_j) \leq d(\mc{F}_{i+1}) ,$$ a contradiction.
\end{proof}

It remains to prove   \cref{t:topological} (ii). That is for every $\Delta \geq 3/2$ we need to construct a graph family closed under taking topological minors with limiting density $\Delta$. We define such a family $\mc{F}_{\Delta}$ as follows. Define the \emph{$\Delta$-excess} of a graph $G$ as $$\psi_{\Delta}(G)=\max_{H \leq_t G, |V(H)| \geq 2} (|E(H)| - \Delta(|V(H)|-1)).$$ 
We say that $1$-patchwork $\mc{H}=(H_i)_{i \in [l]}$ is \emph{$\Delta$-controlled} if \begin{itemize}
	\item[($\Delta$1)] $\sum_{i \in J} \psi_{\Delta}(\nm{H_i}) \leq \Delta - 1$ for every interval $J \subseteq [l]$, and
	\item[($\Delta$2)]  $a_{H_i}(1) \neq b_{H_i}(1)$ for every $i \in [l]$.  
\end{itemize} 
Finally,
let $\mc{F}_{\Delta}$ denote the family consisting of all graphs $G$ such that $G$ is a topological minor of $\nm{\mc{H}}$ for some $\Delta$-controlled $1$-patchwork $\mc{H}$.

\begin{lem}\label{l:upper}
We have $|E(G)| \leq \Delta|V(G)|-1$ for every non-null graph $G \in \mc{F}_{\Delta}.$
\end{lem}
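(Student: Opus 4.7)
The plan is to induct on the length $l$ of a $\Delta$-controlled $1$-patchwork $\mc{H} = (H_1, \ldots, H_l)$ realizing $G$ as a topological minor. A component argument first reduces to the connected case: if each non-null connected component $C$ of $G$ satisfies $|E(C)| \leq \Delta|V(C)| - 1$, summing over the $s \geq 1$ components yields $|E(G)| \leq \Delta|V(G)| - s \leq \Delta|V(G)| - 1$. The single-vertex case is trivial since $\Delta \geq 3/2 > 1$, and for the base $l = 1$ the bound is immediate from $|E(G)| - \Delta(|V(G)| - 1) \leq \psi_\Delta(\nm{H_1}) \leq \Delta - 1$ via $(\Delta 1)$ applied to $J = \{1\}$.

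For the inductive step I will fix an embedding $\eta \colon G \to \nm{\mc{H}}$. If $\eta(G)$ avoids the first or last patch, I apply the inductive hypothesis to a shorter sub-patchwork; otherwise $\eta(G)$ spans from $\nm{H_1}$ to $\nm{H_l}$ and, being connected, passes through every glue vertex $v_j$ ($j \in [l-1]$). I classify each such $v_j$ as \emph{Type I} if $v_j = \eta(w_j)$ for some $w_j \in V(G)$, and \emph{Type II} if $v_j$ is an internal vertex of some edge-path $\eta(e_j)$; internal disjointness of $\eta$ ensures each Type II glue vertex is crossed by a unique edge of $G$.

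Case A (no Type II). Then $G$ decomposes along the cut vertices $w_1, \ldots, w_{l-1}$ into a chain $G = G'_1 \cup \cdots \cup G'_l$ with $G'_i \leq_t \nm{H_i}$ and $V(G'_i) \cap V(G'_{i+1}) = \{w_i\}$. Condition $(\Delta 2)$ forces $w_{i-1} \neq w_i$, so the set $S = \{i : |V(G'_i)| \geq 2\}$ is an interval containing $[2, l-1]$. Combining $|E(G'_i)| \leq \psi_\Delta(\nm{H_i}) + \Delta(|V(G'_i)| - 1)$ for $i \in S$ with the telescoping $\sum_i(|V(G'_i)| - 1) = |V(G)| - 1$ and the interval bound $(\Delta 1)$ applied to $S$ yields $|E(G)| \leq (\Delta - 1) + \Delta(|V(G)| - 1) = \Delta|V(G)| - 1$.

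Case B (at least one Type II). The set $E''$ of edges of $G$ passing through some Type II glue vertex consists of bridges of $G$, and removing $E''$ partitions $G$ into $|E''| + 1$ connected non-null components $G_m$, each confined to a super-block of consecutive patches lying between consecutive Type II glue vertices. Each corresponding sub-patchwork is $\Delta$-controlled and has length strictly less than $l$, so the inductive hypothesis gives $|E(G_m)| \leq \Delta|V(G_m)| - 1$; summing and restoring the $|E''|$ bridges yields $|E(G)| \leq \Delta|V(G)| - (|E''| + 1) + |E''| = \Delta|V(G)| - 1$. I expect the main obstacle to be avoiding the naive \emph{split at a single cut vertex} decomposition, which double-counts the shared vertex and produces only the useless bound $2(\Delta - 1)$; the resolution is to always decompose simultaneously across all Type I glue vertices (invoking the full interval version of $(\Delta 1)$ rather than the singleton version) and to peel off Type II crossings via bridge deletion, so that the ``$-1$'' contributions from the inductive bounds exactly match the number of pieces minus one.
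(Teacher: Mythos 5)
Your proof is correct and follows essentially the same two-case structure as the paper's: induction on $l$, with an inner reduction to connected $G$, a case where every edge of $G$ maps inside a single patch (your Case~A, resolved by the interval form of $(\Delta 1)$ together with the telescoping vertex count that $(\Delta 2)$ makes possible), and a case where some edge crosses a patch boundary (your Case~B, resolved by splitting on that edge and invoking the inductive hypothesis). The only substantive difference is in Case~B: the paper splits on a \emph{single} crossing edge $e$, writing $G \setminus e = G' \cup G''$ with the two halves embedding into $\mc{H}|_{[j]}$ and $\mc{H}|_{[j+1,l]}$ respectively, whereas you remove all Type~II crossing edges $E''$ simultaneously and sum over the resulting $|E''|+1$ pieces. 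Both yield $\Delta|V(G)|-1$ because the ``$-1$'' contributions from each piece cancel exactly against the restored bridges. You also fold the boundary case $|V(\eta(G))\cap V(H_1)|=1$ into Case~A (by allowing a degenerate single-vertex $G'_1$), whereas the paper dispatches it up-front by passing to $\mc{H}|_{[2,l]}$; either way is fine since the empty summand $|V(G'_1)|-1=0$ contributes nothing. Your remark about avoiding the naive one-cut-vertex split is exactly the right intuition — it is precisely why $(\Delta 1)$ is stated for intervals rather than singletons.
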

\begin{proof} Let $G$ be a topological minor of $\nm{\mc{H}}$ for a $\Delta$-controlled $1$-patchwork $\mc{H} = (H_i)_{i \in [l]}$. 
	
	We prove the lemma by induction on $l$. The base case $l=1$ is immediate from ($\Delta$1).
	
	For the induction step, by additional induction on $|V(G)|$, we may assume that $G$ is connected.  
	Let $\eta$ be an embedding of $G$ into $\nm{\mc{H}}$. If $|V(\eta(G)) \cap V(H_1)| \leq 1$, or $|V(\eta(G)) \cap V(H_l)| \leq 1$, the result follows by the induction hypothesis applied to $\mc{H}|_{[2,l]}$ or $\mc{H}|_{[l-1]}$, respectively. Thus we assume that $|V(\eta(G)) \cap V(H_i)| \geq 2$ for $i \in \{1,l\}$.
	
	For $i \in [l]$, let $G_i$ denote the maximum subgraph of  $G$ such that $\eta(G_i) \subseteq \nm{\mc{H}_i}$. Note that, if $V(G_i) \cap V(G_j) \neq \emptyset$ for some $\{i,j\} \subseteq [l]$, then $|V(G_i) \cap V(G_j)|=1$ and $|i-j|=1$ by ($\Delta$2). In particular,  \begin{equation}\label{e:vertexsum}
\sum_{i \in [l]}(|V(G_i)|-1) \leq |V(G)|-1.
	\end{equation}
	Suppose first that $G = \cup_{i \in [l]}G_i$. Then $|V(G_i)| \geq 2$ for every $i \in [l]$, by the assumptions above, and so 
	\begin{align*}|E(G)| &= \sum_{i \in [l]}|E(G_i)| \leq  \sum_{i \in [l]}(\Delta(|V(G_i)|-1) + \psi_{\Delta}(\nm{H_i})) \\ &  \stackrel{(\Delta 1)}{\leq} (\Delta-1) + \Delta\sum_{i \in [l]}(|V(G_i)|-1) \stackrel{(\ref{e:vertexsum})}{\leq} \Delta|V(G)|-1,\end{align*}
as desired.

    Thus we may assume that either there exists $e \in E(G)$ such that $\eta(e) \not \subseteq \nm{\mc{H}_i}$ for any $i \in [l]$. It follows that there exists $ j \in [l-1]$ such that $b_{H_j}(1)$ is an internal vertex of $\eta(e)$. Thus there exist vertex disjoint subgraphs $G',G''$ of $G$ such that $G \setminus e=G' \cup G''$, the restriction of $\eta$ to $G'$ is an embedding into $\mc{H}_{[j]}$, and  the restriction of $\eta$ to $G''$ is an embedding into $\mc{H}_{[j+1,l]}$. By the induction hypothesis we have $|E(G')| \leq \Delta|V(G')| -1$ and $|E(G'')| \leq \Delta|V(G'')| -1$, and so $|E(G)| \leq \Delta|V(G)|-1$, as desired.
\end{proof}	

The next lemma reduces the proof of \cref{t:topological}(ii) to finding a pair of graphs satisfying a few easily checkable properties. 

\begin{lem}\label{l:topdensity}
	If there exist graphs $H_{+}$ and $H_{-}$ such that $|V(H_{+})|,|V(H_{-})| \geq 2$, $\psi_{\Delta}(H_{+})~\geq~0,$  $\psi_{\Delta}(H_{-}) \leq 0$, and $\psi_{\Delta}(H_{+}) -  \psi_{\Delta}(H_{-}) \leq \Delta -1$, then
	$$d(\mc{F}_{\Delta})=\Delta.$$
\end{lem}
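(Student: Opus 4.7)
The upper bound $d(\mc{F}_\Delta) \leq \Delta$ is immediate from \cref{l:upper}, so the heart of the proof is the matching lower bound, which I plan to establish by exhibiting a sequence of $\Delta$-controlled $1$-patchworks $\mc{H}_N$ whose products $\nm{\mc{H}_N}$ have densities approaching $\Delta$.

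To set up the construction, I will first replace $H_+$ by a topological minor $H_+^* \leq_t H_+$ with $|V(H_+^*)| \geq 2$ that attains the maximum in the definition of $\psi_\Delta(H_+)$, so that $|E(H_+^*)| = \alpha_+ + \Delta(|V(H_+^*)| - 1)$ for $\alpha_+ := \psi_\Delta(H_+)$; such a choice exists because the set of topological minors of $H_+$ is finite, and by monotonicity $\psi_\Delta(H_+^*) = \alpha_+$. I obtain $H_-^*$ analogously. I then fix $1$-patches $P_+, P_-$ with $\nm{P_\pm} = H_\pm^*$ and $a_{P_\pm}(1) \neq b_{P_\pm}(1)$ (possible since $|V(H_\pm^*)| \geq 2$), so ($\Delta$2) is automatic for any patchwork built from $P_+$ and $P_-$.

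The patchwork $\mc{H}_N = (Q_1, \ldots, Q_N)$ is then built greedily: I keep track of a running sum $S_i$, set $S_0 = 0$, and at step $i$ let $Q_i = P_+$ if $S_{i-1} \leq 0$ and $Q_i = P_-$ otherwise, with $S_i = S_{i-1} + \psi_\Delta(\nm{Q_i})$. A short induction shows the invariant $S_i \in [-|\alpha_-|, \alpha_+]$. Since the sum $\sum_{i \in J}\psi_\Delta(\nm{Q_i})$ over any interval $J = [j_1, j_2]$ telescopes to $S_{j_2} - S_{j_1 - 1}$, the invariant bounds it by $\alpha_+ + |\alpha_-| = \alpha_+ - \alpha_- \leq \Delta - 1$, so ($\Delta$1) holds and $\nm{\mc{H}_N} \in \mc{F}_\Delta$.

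Writing $a$ and $b$ for the numbers of $P_+$ and $P_-$ in $\mc{H}_N$ (so $a + b = N$) and $v_\pm := |V(H_\pm^*)| - 1 \geq 1$, the product identity \eqref{e:vproduct} together with the relations $|E(H_\pm^*)| = \alpha_\pm + \Delta v_\pm$ gives $|V(\nm{\mc{H}_N})| = a v_+ + b v_- + 1$ and $|E(\nm{\mc{H}_N})| = S_N + \Delta(|V(\nm{\mc{H}_N})|-1)$, whence $d(\nm{\mc{H}_N}) = \Delta + (S_N - \Delta)/|V(\nm{\mc{H}_N})|$. As $|S_N|$ stays bounded by $\Delta - 1$ while $|V(\nm{\mc{H}_N})| \geq N + 1 \to \infty$, the densities converge to $\Delta$, yielding $d(\mc{F}_\Delta) \geq \Delta$. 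The main obstacle (and indeed the only genuine subtlety) is verifying the greedy invariant; the crucial feature is that the hypothesis $\psi_\Delta(H_+) - \psi_\Delta(H_-) \leq \Delta - 1$ exactly matches the width of the strip in which the walk $(S_i)$ is constrained to live, which is precisely what ($\Delta$1) demands.
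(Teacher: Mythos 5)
Your proposal is correct and follows essentially the same route as the paper's proof: replace $H_\pm$ by density-extremal topological minors, form $1$-patches with distinct boundary vertices, build the patchwork by the same greedy rule keeping the running excess in $[\psi_\Delta(H_-),\psi_\Delta(H_+)]$, telescope to verify ($\Delta$1), and conclude $d(\nm{\mc{H}_N}) \to \Delta$. The only differences from the paper are cosmetic (the paper starts with $H_-$ and phrases the greedy choice with the opposite inequality), so the two arguments are equivalent.
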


\begin{proof}
	We have $d(\mc{F}_{\Delta}) \leq \Delta$ by \cref{l:upper}. It remains to show that $d(\mc{F}_{\Delta}) \geq \Delta$.

	Let $\alpha = \psi_{\Delta}(H_{+})$ and $\beta =   \psi_{\Delta}(H_{-})$. Replacing $H_{+}$ and $H_{-}$ by their topological minors, if needed, we assume that $|E(H_{+})| = \Delta(|V(H_{+})|-1)+\alpha$, and $|E(H_{-})| = \Delta(|V(H_{-})|-1)+\beta$. 
	We convert the graphs $H_{+}$ and $H_{-}$ into $1$-patches by arbitrarily choosing distinct $a_{H_{+}}(1),b_{H_{+}}(1) \in V(H_{+})$ and $a_{H_{-}}(1),b_{H_{-}}(1) \in V(H_{-})$. 
	
	We now define an infinite sequence of $1$-patches $(H_i)_{i \in \bb{N}}$, such that $H_i \in \{H_{-},H_{+}\}$ for every $i \in \bb{N}$, as follows. Let $H_1=H_{-}$. Once $H_1,\ldots,H_k$ are defined, let $H_{k+1}=H_{-}$ if $\sum_{i \in [k]} \psi_{\Delta}(\nm{H_i}) \geq 0$, and let $H_{k+1}=H_{+}$, otherwise. 
	
	By induction on $k$ it immediately follows from the above definition that 
	\begin{equation}\label{e:totalexcess}
	 \beta \leq \sum_{i \in [k]} \psi_{\Delta}(\nm{H_i}) \leq \alpha,
	\end{equation}
for every $k \in \bb{N}$.
	Let $\mc{H}_l = (H_i)_{i \in [l]}.$ We claim that $\mc{H}_l$ is $\Delta$-controlled. Indeed, ($\Delta$2) holds by our definition of $1$-patches $H_{+}$ and $H_{-}$, and for  all $1 \leq j_1 \leq j_2 \leq l$  we have
	\begin{align*}
	\sum_{i=j_1}^{j_2}\psi_{\Delta}(\nm{H_i}) = \sum_{i \in [j_2]} \psi_{\Delta}(\nm{H_i}) - \sum_{i \in [j_1-1]} \psi_{\Delta}(\nm{H_i})\stackrel{\eqref{e:totalexcess}}{\leq} \alpha - \beta \leq \Delta -1, 
	\end{align*}
	and so ($\Delta$1)  also holds.
	Thus $\nm{\mc{H}_l} \in \mc{F}_{\Delta}$ for every $l$, and 
	\begin{align*}
	|E(\nm{\mc{H}_l})| &= \sum_{i \in [l]} |E(H_i)|  =  \sum_{i \in [l]}(\Delta(|V(H_i)|-1) + \psi_{\Delta}(\nm{H_i})) \\ &=
	\Delta(|V(\nm{\mc{H}_l})|-1) + \sum_{i \in [l]} \psi_{\Delta}(\nm{H_i})  \stackrel{\eqref{e:totalexcess}}{\geq} \Delta(|V(\nm{\mc{H}_l})|-1)  + \beta.
	\end{align*}
	Thus $\limsup_{l \rightarrow \infty} d(\nm{\mc{H}_l} )  \geq \Delta$, implying $d(\mc{F}_{\Delta}) \geq \Delta$.
\end{proof}

\begin{proof}[Proof of \cref{t:topological} (ii)] 
	It suffices for every $\Delta \geq 3/2$ to find the graphs $H_{+}$ and $H_{-}$ satisfying the conditions of \cref{l:topdensity}

Suppose first $\Delta \geq 2$.
	Let $t= \lceil 2\Delta \rceil$. Then $\psi_{\Delta}(K_t) \geq 0$, and $\psi_{\Delta}(G) < 0$ for every $G$ with $2 \leq |V(G)| < t$. Let $s = \lfloor \psi_{\Delta}(K_t) \rfloor$. Let $H_{+}$ and $H_{-}$ be obtained from $K_t$ by deleting $s$ and $s+1$ arbitrary edges, respectively. Then $\psi_{\Delta}(H_{+}) =  \psi_{\Delta}(K_t) - s \geq 0$, $0 > \psi_{\Delta}(H_{-}) \geq \psi_{\Delta}(H_{+}) -1$, and, consequently, $$\psi_{\Delta}(H_{+}) -  \psi_{\Delta}(H_{-}) \leq 1 \leq \Delta -1.$$ Thus $H_{+}$ and $H_{-}$ satisfy the desired conditions.
	
	It remains to consider the case $3/2  \leq \Delta < 2$. Let an integer $t \geq 3$ be chosen minimum such that $(2t-3) \geq \Delta(t-1)$.
	Let $H_{+}$ be the graph on $t$ vertices obtained from a path on $t-1$ vertices by adding a vertex adjacent to all the vertices of the path, and let $H_{-}$ be similarly obtained from a path on $t-2$ vertices. The graphs $H_{+}$ and $H_{-}$ are both outerplanar, and thus all topological minors of these graphs are outerplanar.
	It is well-known that $|E(H)| \leq 2|V(H)|-3$ for every outerplanar graph $H$ with $|V(H)| \geq 2$. It follows that
	$$\psi_{\Delta}(H_{+}) = \max_{2 \leq s \leq t}(2s-3 - \Delta(s-1))= (2t-3) - \Delta(t-1) \geq 0, $$ 	and, similarly, $\psi_{\Delta}(H_{-})= (2(t-1)-3)-\Delta(t-2) < 0$. Thus $$\psi_{\Delta}(H_{+}) - \psi_{\Delta}(H_{-}) = 2 - \Delta \leq \Delta - 1, $$
	as desired
\end{proof}

\section{Concluding remarks and open problems}\label{s:remarks}

\subsubsection*{Density vs. limiting density}
A natural, non-asymptotic notion of density of a graph class $\mc{F}$ is $$d^*(\mc{F})=\sup_{G \in \mc{F}, |V(G)|\geq 1}d(G).$$
If $\mc{F}$ is closed under taking disjoint unions then clearly $d(\mc{F})=d^*(\mc{F})$. In general, either  $d(\mc{F})=d^*(\mc{F})$, or there exists $G \in \mc{F}$ such that $d^*(\mc{F})=d(G)$. Thus \cref{t:rational} implies that
$d^*(\mc{F})$ is rational for every proper-minor closed graph class $\mc{F}$.

\subsubsection*{Extremal function vs. bounded pathwidth}
We don't know whether \cref{t:pathwidth} can be extended from densities to the extremal functions. 

\begin{qn}\label{que:maxnumberofedges}
	Let $\mc{F}$ be a proper minor-closed class of graphs. Does there necessarily exist a class $\mc{F}' \subseteq \mc{F}$ of bounded pathwidth such that $ex_{\mc{F}'}(n)=ex_{\mc{F}}(n)?$
\end{qn}

The first author~\cite[Theorem 6.5]{Kap18} proved that the answer to \cref{que:maxnumberofedges} is positive if $\mc{F}$ has bounded treewidth.

\subsubsection*{The complexity of $\mc{D}$}
Several questions from~\cite{Eppstein10} about the structure of $\mc{D}$ remain open, and it is possible that this structure rather involved. In particular, we do not know the answers to the following decidability questions.

\begin{qn}\label{que:densitydecidability}
	\begin{enumerate}
		\item Is the problem of the membership in $\mc{D}$ decidable? 
		\item Does there exist an algorithm which, given graphs $H_1,H_2,\ldots,H_k$ determines $d(\mc{F})$, where $\mc{F}$ is the class of all graphs containing none of $H_1,H_2,\ldots,H_k$ as a minor?   
	\end{enumerate}
\end{qn}

\subsubsection*{Topological minors}
It seems possible that the methods of this paper can be used to show that $\mc{D}_T \cap [0,3/2] \subseteq \bb{Q}$. We however did not pursue this, as, in contrast to $\mc{D}$,  complete understanding of the structure of $\mc{D}_T$ appears within reach.  Likely this can be accomplished via an extension of the methods used by Eppstein~\cite{Eppstein10} and by McDiarmid and Przykucki~\cite{McdPrz19} to explicitly describe $\mc{D} \cap [0,3/2]$ and $\mc{D} \cap [0,2]$, respectively.

\begin{problem}
	Give an explicit description of $\mc{D}_T \cap [0,3/2]$.
\end{problem}

\subsubsection*{Matroids}

Geelen, Gerards and Whittle~\cite{GeeGerWhi13}  conjectured generalizations of  Theorems~\ref{t:rational}--\ref{t:periodic} to linearly dense matroid classes. Many steps of the proof appear to  extend to matroids, but analogues of several other crucial ingredients, in particular of \cref{t:RSwqo} and \cref{t:flat} are  missing.

\subsubsection*{Acknowledgements} 
We thank Jim Geelen, Paul Seymour and Robin Thomas for stimulating discussions on the subject of this paper.
 
\bibliographystyle{plain}
\bibliography{../snorin}
\end{document}